\newcommand{\ssup}[1] {{{\scriptscriptstyle{({#1}})}}}
\newtheorem{claim}{Claim}
\newtheorem{theorem}{Theorem}[section]
\newtheorem{assumption}[theorem]{Assumption}
\newtheorem{lemma}[theorem]{Lemma}
\newtheorem{corollary}[theorem]{Corollary}
\newtheorem{proposition}[theorem]{Proposition}
\newtheorem{definition}{Definition}
\def\P{{\mathbb P}} 
\def\E{{\mathbb E}} 
\def\Z{{\mathbb Z}} 
\def \N{{\mathbb{N}}}
\def \R {{\mathbb{R}}}
\def \X {{\mathbf{X} }}
\newcommand{\Bcal}  {{\mathcal B}}
\newcommand{\Ccal}   {{\mathcal C }} 
\newcommand{\Dcal}   {{\mathcal D }} 
\newcommand{\Fcal}   {{\mathcal F }} 
\newcommand{\Gcal}   {{\mathcal G }} 
\newcommand{\rmd}{\,\mathrm{d}}
\def\phi{\varphi}
\def\1{{\mathchoice {1\mskip-4mu\mathrm l}      
{1\mskip-4mu\mathrm l} 
{1\mskip-4.5mu\mathrm l} {1\mskip-5mu\mathrm l}}}
\newcommand{\eps}{\varepsilon}
\title{Strongly vertex-reinforced jump process on graphs with bounded degree}
\author{Andrea Collevecchio}
\address[\sc A. Collevecchio]{School of Mathematics and Monash Data Futures Institute, Monash University, Victoria 3800, Australia}
\email{andrea.collevecchio@monash.edu}
\author{Tuan-Minh Nguyen}
\address[\sc T.M. Nguyen]{School of Mathematics, Monash University, Victoria 3800, Australia}
\email{tuanminh.nguyen@monash.edu}
\begin{document}

\begin{abstract}
    We study asymptotic behaviours of a non-linear vertex-reinforced jump process defined on an arbitrary infinite graph with bounded degree.  We prove that if the reinforcement function $w$ is reciprocally integrable and non-decreasing, then the process visits only a finite number of vertices.
   In the case where $w$ is approximately equal to a super-linear polynomial, we show that the process eventually gets stuck on a star-shaped subgraph and there is exactly one vertex with unbounded local time. 
\end{abstract}
\subjclass[2010]{60G17, 60K35, 60G20}
\keywords{self-interacting processes, random processes with reinforcement, vertex-reinforced jump processes, localization}
\maketitle
\tableofcontents

\section{Introduction}\label{se:main} 
\subsection{Description of the model and the main result}
Let $\mathcal G=(V,E)$ be a  connected, undirected graph. We assume that $\mathcal{G}$  has no loops, i.e., edges whose endpoints coincide. Moreover, we suppose that $\mathcal G$ is a graph with bounded degree, i.e. 
$d:=\sup_{v\in V} \deg(v)<\infty,$
where $\deg(v)$ stands for the degree of vertex $v$.
Fix a collection of positive real numbers $\ell=(\ell_v)_{v \in V}$ and set $\ell_*:=\inf_{v\in V }\ell_v$. Let $w:[\ell_*,\infty)\to (0,\infty)$ be a Borel measurable function. Denote by $\R_+$ the set of non-negative real numbers. We consider a continuous-time stochastic process ${\bf X} = (X_t)_{t\in\R_+}$, so-called \textbf{vertex-reinforced jump process}  with initial local times $\ell$ and reinforcement function $w$, denoted by VRJP($\ell, w$), which is defined as follows. 
\begin{enumerate}
\item[i.] It is a c\`adl\`ag process which takes values on $V$ and it jumps to nearest-neighbour vertices.
\item[ii.] For each time $t>0$, conditionally on the past  $\mathcal{F}_t = \sigma\{X_{s},s\le t\}$, the probability that  exactly one jump occurs  during  the time interval $(t,t+h]$, and is  towards a neighbour $v$ of $X_t$  is given by
\begin{equation*} \label{eq:smint}
w\left(\ell_v+\int_0^t \1_{\{X_s=v\}}\text{d}s \right)\cdot h+o(h).
\end{equation*}
The probability of more than one jump  in $(t, t+h]$ is $o(h)$.
\end{enumerate} 

We later define a strong construction for this process in Section \ref{Se:strongconstr} using a collection of i.i.d. exponential random variables, which is inspired by Rubin's construction for Pólya's urn \cite{Davis1990}. It is worth mentioning that under a mild condition on the reinforcement function $w$, the process is non-explosive (see Proposition \ref{prop:non-exp}).

For each vertex $v\in V$, set
$L(v,t):=\ell_v+\int_0^t\1_{\{X_s=v\}}\text{d}s$, that is the \textbf{local time} at vertex $v$ by time $t$. 
 For each subset $U\subset V$  define $L(U,t):=\sum_{v\in U}L(v,t)$.

Denote by $\mathsf{L}^{1}([\ell_*,\infty))$ the set of Lebesgue integrable functions on $[\ell_*,\infty)$. The process $\X$ is  called \textbf{strongly reinforced} if 
$1/w\in \mathsf{L}^{1}([\ell_*,\infty))$. Conversely, the process is called \textbf{weakly reinforced} if $1/w\notin \mathsf{L}^{1}([\ell_*,\infty))$.

We denote by $\delta(x,y)$ the graph distance between two vertices $x$ and $y$ in $V$, i.e. the number of edges in a shortest path connecting $x$ and $y$. For two subsets $A, B\subset V$ we define $$\delta(A,B)=\inf_{x\in A, y\in B}\delta(x,y).$$ 
Let
$\Bcal_r(x)=\{ y\in V: \delta(x,y)\le r\}$ and $\partial\Bcal_r(x)= \{y\in V: \delta(x,y)= r+1\}.$
They stand, respectively,  for the discrete ball of radius $r$ with center $x$ and its outer boundary. We denote by $\mathcal{N}_x$ the set of all nearest neighbours of $x$ and  write $x\sim y$ when $y\in \mathcal{N}_x$. For a pair of real numbers $a$ and $b$, let $a\wedge b$ denote the minimum of $a$ and $b$.
We use the convention that $\inf \emptyset =\infty.$

\begin{assumption}\label{assump}
Assume that
\begin{itemize}
\item[i.] $\ell_*>0$ and $\ell^*:=\sup_{v\in V} \ell_v<\infty$;
\item[ii.] $1/w\in \mathsf{L}^{1}([\ell_*,\infty))$ and $w$ is non-decreasing.
\end{itemize}
\end{assumption}

Our aim in this paper is to prove the following localisation result.

\begin{theorem}\label{thm:main}
Assume that $\X=(X_t)_{t\in\R_{+}}$ is a ${\rm VRJP}(\ell,w)$ on $\mathcal{G}=(V,E)$ such that Assumption \ref{assump} is fulfilled. We have that:
\begin{itemize}
    \item[(a)] With probability 1, the process eventually gets stuck in a finite subgraph.
    \item[(b)] If $w$ is differentiable and there exist constants $\kappa>1$ and $\alpha>1$ such that \begin{equation}
        \label{cond.w}\kappa^{-1} t^{\alpha}\le w(t)\le \kappa t^{\alpha}  \text{ for all } t\ge\ell_*
    \end{equation}
        then, with probability 1, there exists a unique vertex $v\in V$, such that $L(v, \infty) = \infty$, each neighbour $u\in \mathcal{N}_v$ is visited infinitely often, and all the vertices at distance at least two from $v$, are visited finitely often.
\end{itemize}
\end{theorem}

We conjecture that Theorem~\ref{thm:main} still holds without assuming that $w$ is non-decreasing, differentiable and satisfying the condition \eqref{cond.w}, but rather that $w$ is a reciprocally integrable càdlàg function in $[\ell_*, \infty)$.

\subsection{Literature review}
Random processes with reinforcement form a fundamental class of random walks in which the walker prefers returning to sites they have previously visited rather than exploring unfamiliar ones. One of the most intriguing aspects of these processes is localisation, i.e. the phenomenon in which the walker eventually gets trapped in a finite region when the reinforcement increases fast enough.  

This research field started with the seminal work of Coppersmith and  Diaconis \cite{1}. They introduced a discrete-time random walk model so-called \textit{Edge-Reinforced Random Walk} (ERRW). In this model, the probability that the walk crosses at time $n$ an edge $e$ incident to the position of the walk is proportional to the number of crossings to $e$ by time $n$. Davis \cite{Davis1990} obtained  results for  ERRW with non-linear reinforcement. He proved that the strongly ERRW on $\mathbb{Z}$ eventually localizes on a single edge. This result was later extended by Sellke \cite{Sellke2008} to any bipartite graph with bounded degree (e.g. $\mathbb{Z}^2$). Sellke also conjectured that the localisation on a single edge occurs on arbitrary graphs with bounded degree. This conjecture was settled  by Cotar and Thacker in \cite{CT2017}. See also \cite{LT2007} and \cite{LT2008}.
 
Another process with reinforcement, namely  
Vertex-Reinforced Random Walk (VRRW), is introduced by Pemantle \cite{P1992}. In this model, the probability that the walk at time $n$ visit a vertex $x$ incident to the position of the walk is proportional to the number of visits to $x$ by time $n$. Remarkably, Tarrès \cite{Tarres2004} proved that linear VRRW on $\mathbb Z$ eventually localizes on exactly five consecutive vertices. Criteria for the localisation on 4 or 5 sites were further studied in \cite{BSS2014} and \cite{Schapira2021}.
Volkov \cite{Volkov2001} showed that linear VRRW defined on an infinite graph with bounded degree eventually gets stuck in a finite subgraph with positive probability. A criterion for the localisation on exactly 2 vertices of VRRW with super-linear reinforcement defined on a general graph with bounded degree was given by Cotar and Thacker in \cite{CT2017}. Additionally, phase transitions for the localisation range of strongly VRRW defined on complete graphs were shown by Benaim et al. in \cite{BRS2013}.

VRJP was originally conceived by Wendelin Werner as a continuous-time counterpart of VRRW. This model was first studied by Davis and Volkov \cite{DV2002, DV2004}. In particular, it was shown in \cite{DV2004} that linear VRJP on any finite graph spends an unbounded amount of time at all vertices while the vector of all the normalized local times converges almost surely to a non-trivial limit. In \cite{ST2015}, Sabot and Tarrès proved that linear VRJP on any finite graph is actually a mixture of time-changed Markov jump processes and the mixing measure is the partition function of a super-symmetric hyperbolic sigma model called $\mathbb{H}^{2|2}$. In particular, they showed that the centered local times of the process converges in distribution to a multivariate inverse Gaussian law. 
Notably, the existence of a recurrent phase was shown by Sabot and Tarrès \cite{ST2015} and also by Angel, Crawford and
Kozma \cite{ACK2014} using different approaches. Using the connection between linear VRJP and linear ERRW, the existence of a transient phase of linear ERRW on $\Z^d$ with $d\ge 3$ was proved by Disertori, Sabot and Tarrès \cite{DST2015}; the recurrence on $\Z^2$ for any initial constant weights was shown by Sabot and
Zeng \cite{SZ2019}; and the uniqueness of the phase transition was recently demonstrated by Poudevigne \cite{P2022}.


Recent papers \cite{NR2021,CNV2022} focused on VRJP with non-linear reinforcement. 
In particular, it was shown in \cite{CNV2022} that strongly VRJP defined on $\mathbb{Z}$ with initial weights 1 eventually gets stuck on exactly 3 consecutive vertices while weakly VRJP is recurrent and all its local times are unbounded. The present paper generalizes the results in \cite{NR2021} and \cite{CNV2022} to general graphs. 
As mentioned earlier, studying  random processes with reinforcement on general graphs is typically challenging and requires innovative techniques. Our core idea for the proof of Theorem \ref{thm:main}, as outlined below, mainly relies on a martingale approach extended from the one in \cite{CNV2022} together with a novel combinatorial representation of a solution to Poisson equation associated to VRJP. 

\subsection{Strategy of the proof and outline of the paper} We  provide a strong construction of the VRJP in Section \ref{Se:strongconstr}  and show that the process  does not have infinitely many jumps in a bounded time interval (i.e. it is non-explosive)  under a mild condition for initial local times. We prove, in Section \ref{sec:finrang}, that under Assumption \ref{assump}, the range of the process is almost surely finite. Consequently, the problem is reduced to  establishing the localisation of VRJP defined on finite graphs.
In Section \ref{sec:nonconv}, we introduce  preliminary results on martingales and Markov chains, which are necessary for the proof of Theorem \ref{thm:main}.
In Section \ref{sec:f.graph}, we show that strongly VRJP defined on a finite graph eventually gets stuck on a star-shaped subgraph. To demonstrate this, we compare the local times relative to any pair of vertices $i$ and $j$ at distance at most two from each other. More specifically, we consider the following process
$$Y_{ij}(t):=\int_{\ell_i}^{L(i,t)}\frac{\rmd u}{w(u)}-\int_{\ell_j}^{L(j,t)}\frac{\rmd u}{w(u)}.$$
This process is a semimartingale which can be decomposed into a sum of a martingale and a finite variation process.  By solving a Poisson equation associated to VRJP and using the matrix-tree theorem for weighted directed graphs, we obtain bounds for the martingale part as well as the finite variation part. Using these bounds, we establish a non-convergence result for $(Y_{ij}(t))_{t\in\R_+}$. In particular, we obtain that almost surely
\begin{align}\label{ratio1}\liminf_{t\to\infty}\frac{L(i,t)\wedge L(j,t)}{t}=0,\end{align} 
for any of pair of vertices $i$ and $j$ such that either they are neighbours or $\delta(i,j)=2$ and their common neighbours have finite local times. Let $V^*$ denote the set of vertices with unbounded local times. We observe that if $V^*$ is composed of more than one connected component, then for any connected component $U$, we have almost surely $\delta(U, V^*\setminus U)=2$.
We then show that almost surely
\begin{align}\label{ratio2}&\liminf_{t\to\infty}\frac{L(i,t)}{L(U,t)}>0 \text{ for each connected component $U$ of $V^*$ and for each $i\in U$, and}\\
\label{ratio3}&\liminf_{t\to\infty} \frac{L(U,t)}{L(\widetilde{U},t)}>0 \text{ for any pair of connected components $U$ and $\widetilde{U}$ of $V^*$}.
\end{align}
Combining \eqref{ratio1}, \eqref{ratio2} and \eqref{ratio3}, we point out that $V^*$ is connected. This fact, together with \eqref{ratio1}, is used to infer that $V^*$ contains a single vertex. Hence, there is almost surely a unique vertex with unbounded local time and the process eventually jumps only between this vertex and its neighbours.

\section{Construction of vertex-reinforced jump process}\label{Se:strongconstr}
 Fix a collection of positive initial local times $\ell=(\ell_v)_{v\in V}$ and a Borel measurable reinforcement function $w:(0,\infty)\to (0,\infty)$. Let $\vec{E}=\{(u,v), (v,u) : u, v\in V \text{ and } u\sim v\}$, that is the set of all directed edges induced by $E$. To any directed edge $e\in \vec{E}$, assign a Poisson process $\mathcal{P}(e)$ with rate one. We assume that the Poisson processes are independent. Denote by $(\chi^{e}_n)_{n \in \N}$ the inter-arrival times for the process  $\mathcal{P}(e)$, i.e. $(\chi^{
e}_n)_{n \in \N}$ are i.i.d. exponential random variables with mean one. We first construct the \lq skeleton\rq\ of the VRJP($\ell,w$) on $\mathcal G=(V,E)$, i.e. a discrete-time process which describes the jumps of the VRJP. Let  $\tau_0 = 0$. {On the event $\{X_0 =\rho\}$ with some $\rho \in V$, set
$$
\tau_1 := \min_{ v\sim \rho}\frac 1{w(\ell_{v})} \chi^{\ssup {\rho,v}}_1 ,
$$ 
and $L(\rho, \tau_1) := \ell_{\rho} +\tau_1$, and for $x \neq \rho$, let $L(x, \tau_1) = \ell_x$.} Moreover set 
$$
{X_{\tau_1} := \arg\min_{v \sim \rho} \frac 1{w(\ell_v)} \chi^{\ssup {\rho,v}}_1.}
$$
Suppose we defined $(\tau_k, X_{\tau_k},  (L(v, \tau_k))_{v \in V})$ for all $k\le n$. On the event $\{X_{\tau_n} = i\}$,  for $j\sim i$, let $$\gamma_j :=1+\sum_{k=1}^n \1_{\{(X_{\tau_{k-1}},X_{\tau_k})= (i,j)\}},$$ and let 
 \begin{eqnarray*}
 \tau_{n+1} &:=& \tau_n+ \min_{j\sim i}\frac 1{w(L(j, \tau_n))}\chi^{\ssup {i,j}}_{{\gamma_{j}}},\\
 L(i, \tau_{n+1}) &:=& L(i, \tau_n) +  \tau_{n+1} - \tau_n, \quad \mbox{and for $x \neq i$ we set }  L(x,  \tau_{n+1}) = L(x, \tau_n),\\
 X_{\tau_{n+1}} &:=&  \arg\min_{j \sim i}\frac 1{w(L(j, \tau_n))}\chi^{\ssup {i,j}}_{{\gamma_j}}.
 \end{eqnarray*}
 By recursion, $\tau_n, X_{\tau_n}$ and  $(L(v, \tau_n))_{v \in V})$ are defined for all $n\in \N$. From this construction, we immediately deduce the following result.

\begin{proposition}\label{VRJP.constr}
The process $\X=(X_t)_{t\in \R_+}$, defined by $$X_t:=\sum_{n=0}^{\infty}X_{\tau_n}\1_{\{\tau_n\le t<\tau_{n+1}\}},$$
is a ${\rm VRJP}(\ell,w)$ on $\mathcal G=(V,E)$.
\end{proposition}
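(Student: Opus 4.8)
The plan is to verify the two defining properties (i) and (ii) of the VRJP$(\ell,w)$ directly from the skeleton construction, using the memorylessness of the exponential inter-arrival times. The key observation is that, by the strong Markov property of independent Poisson processes, at each step $n$, conditionally on $\mathcal{F}_{\tau_n}$ and on the event $\{X_{\tau_n}=i\}$, the family $\bigl(\chi^{\ssup{i,j}}_{\gamma_j}\bigr)_{j\sim i}$ consists of independent $\mathrm{Exp}(1)$ random variables that are \emph{fresh}, i.e. independent of the past — because each index $\gamma_j$ is exactly one more than the number of times the directed edge $(i,j)$ has already been traversed, so the clock $\chi^{\ssup{i,j}}_{\gamma_j}$ has not yet been used. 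Consequently $\frac{1}{w(L(j,\tau_n))}\chi^{\ssup{i,j}}_{\gamma_j}$ is, conditionally, an exponential random variable with rate $w(L(j,\tau_n))$, and these are independent across $j\sim i$.

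First I would make this freshness/independence claim precise by an induction on $n$: assuming that the pair $\bigl(\tau_n, X_{\tau_n}, (L(v,\tau_n))_{v\in V}\bigr)$ is measurable with respect to the $\sigma$-algebra generated by the clocks used up to step $n$, and that on $\{X_{\tau_n}=i\}$ none of the clocks $\chi^{\ssup{i,j}}_{\gamma_j}$, $j\sim i$, have been consulted, the elementary fact that the minimum of independent exponentials $E_j\sim\mathrm{Exp}(\lambda_j)$ is $\mathrm{Exp}(\sum_j\lambda_j)$-distributed and independent of the a.s.-unique argmin (which equals $j$ with probability $\lambda_j/\sum_k\lambda_k$) shows that $\tau_{n+1}-\tau_n$ and $X_{\tau_{n+1}}$ have the required conditional laws, and that at step $n+1$ the corresponding new clocks at the vertex $X_{\tau_{n+1}}$ are again untouched. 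This gives the discrete skeleton the law of an embedded jump chain with the correct holding-time rates.

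Next I would translate this into the infinitesimal description (ii). Fix $t>0$ and work on the event $\{\tau_n\le t<\tau_{n+1},\ X_{\tau_n}=i\}$, which is $\mathcal{F}_t$-measurable; by the memorylessness of the exponential, the residual clock for each neighbour $j$, conditionally on $\mathcal{F}_t$, is again $\mathrm{Exp}\bigl(w(L(j,t))\bigr)$, independent across $j$, and $L(j,t)=L(j,\tau_n)=\ell_j+\int_0^t\1_{\{X_s=j\}}\rmd s$ when $j\ne i$ while $L(i,t)=\ell_i+\int_0^t\1_{\{X_s=i\}}\rmd s$. Hence the probability of exactly one jump in $(t,t+h]$, to a given neighbour $v$, is $\P\bigl(E_v\le h,\ E_u>h\ \forall u\sim i,\ u\ne v\bigr)$ with $E_u\sim\mathrm{Exp}(w(L(u,t)))$ independent, which equals $\bigl(1-e^{-w(L(v,t))h}\bigr)\prod_{u\ne v}e^{-w(L(u,t))h}=w(L(v,t))\,h+o(h)$, and the probability of two or more jumps is $O(h^2)=o(h)$. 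Property (i) — càdlàg, $V$-valued, nearest-neighbour jumps — is immediate from the definition $X_t=\sum_n X_{\tau_n}\1_{\{\tau_n\le t<\tau_{n+1}\}}$ together with the fact that each $X_{\tau_{n+1}}$ is by construction a neighbour of $X_{\tau_n}$.

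The main obstacle is purely bookkeeping rather than conceptual: one must carefully justify the freshness of the clocks $\chi^{\ssup{i,j}}_{\gamma_j}$ — that is, that the index $\gamma_j$ selected at step $n$ never coincides with an index already consumed along the directed edge $(i,j)$ in an earlier step — and set up the filtration so that the conditional-independence statements are rigorous; this is where the choice $\gamma_j=1+\#\{k\le n:(X_{\tau_{k-1}},X_{\tau_k})=(i,j)\}$ does exactly the right counting. Once that is in place, the rest is the standard correspondence between a Markov jump process specified by holding-time rates and its infinitesimal generator. I would remark that non-explosiveness (needed for $X_t$ to be well-defined for all $t\in\R_+$) is not required here and is addressed separately in Proposition \ref{prop:non-exp}.
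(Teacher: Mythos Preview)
Your proposal is correct and is precisely the standard verification that the paper has in mind: the paper does not give an explicit proof of this proposition, stating only that it follows ``immediately'' from the construction. Your argument via freshness of the clocks $\chi^{\ssup{i,j}}_{\gamma_j}$ and memorylessness of exponentials is exactly the reasoning that makes this immediacy rigorous, so there is nothing to compare.
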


In our next result, we provide a necessary condition for the non-explosion of vertex-reinforced jump processes. In this context, we do not work under more general assumptions than the ones in  Theorem \ref{thm:main}. In particular we could allow $\ell_* = 0$ and $\ell^*=\infty$. Moreover, $w$ is not necessarily monotone increasing. It is evident that the condition $\ell^*<\infty$ implies \eqref{non-ex} below. Recall that $\Bcal_r(x)=\{ y\in V: \delta(x,y)\le r\}$ and that $\partial\Bcal_r(x)$ denotes the outer boundary.

\begin{proposition}[Non-explosion]
Suppose that $X_0=\rho$ and  that $w:(0,\infty)\to(0,\infty)$ is locally bounded and  satisfies
\begin{equation}\label{non-ex}\sum_{r=0}^{\infty}
\frac{1}{\max_{v\in \partial\Bcal_r(\rho)}w(\ell_v)}=\infty.
\end{equation}
Then, the process $\X$ is non-explosive, i.e. for any $t>0$ there exists a jump after time $t$.
\label{prop:non-exp}
\end{proposition}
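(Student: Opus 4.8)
My plan is to bound the jump times $\tau_n$ from below by comparing the VRJP to a pure birth/radial process whose explosion can be ruled out by the divergence condition \eqref{non-ex}. The key observation is that, in the strong construction of Section \ref{Se:strongconstr}, at each jump the holding time at a vertex $i$ with $X_{\tau_n}=i$ equals $\min_{j\sim i}\frac{1}{w(L(j,\tau_n))}\chi^{\ssup{i,j}}_{\gamma_j}$. Since $w$ is increasing along each local time and local times only grow, the rate $w(L(j,\tau_n))$ is the genuine obstacle: it is not a priori bounded. To get around this I would track the process by the maximal radius reached. Let $R_n=\max_{k\le n}\delta(\rho,X_{\tau_k})$ be the furthest distance from $\rho$ visited by the skeleton up to step $n$. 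When the walker is at a vertex $x$ with $\delta(\rho,x)=r$, every neighbour $j$ of $x$ lies in $\Bcal_{r+1}(\rho)$, hence satisfies $w(L(j,\tau_n))\ge $ ... no — $w$ is not monotone in the vertex, only in the local time. So instead I would use that up to the first time the radius exceeds $r$, the \emph{only} vertices ever visited lie in $\Bcal_r(\rho)$, and during that phase each local time $L(j,t)$ has been incremented only finitely and the walker has made only finitely many jumps; but that alone does not bound $w$.

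The cleaner route: suppose for contradiction that $\X$ explodes, i.e. $\tau_\infty:=\lim_n\tau_n<\infty$ with positive probability. On that event the walker makes infinitely many jumps in finite time, so it visits some vertex, hence some finite ball $\Bcal_r(\rho)$, infinitely often; in fact, since explosion forces the skeleton to accumulate, there is a (random) finite set $S$ of vertices visited infinitely often, and after some step $N$ the walker stays in $S$. Let $r$ be large enough that $S\subset\Bcal_r(\rho)$. Now here is where \eqref{non-ex} must enter — but the condition concerns boundaries $\partial\Bcal_r$, suggesting the real argument is the opposite direction: one shows the walker \emph{must escape to infinity along a radial path} if it explodes. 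Concretely, I would argue: an exploding trajectory performs infinitely many jumps, so for the total rate seen to be summable-reciprocally-divergent... Let me restructure. The correct mechanism: define $\sigma_r$ = first time the walker reaches $\partial\Bcal_{r-1}(\rho)$, i.e. distance $r$. Between $\sigma_{r}$ and $\sigma_{r+1}$ the walker sits at vertices within $\Bcal_r(\rho)$; pick for the lower bound on $\tau_{\sigma_{r+1}}-\tau_{\sigma_r}$ just the \emph{first} holding time after $\sigma_r$: at that moment the walker is at some $x$ with $\delta(\rho,x)=r$, all of whose neighbours $j$ satisfy $\delta(\rho,j)\le r+1$, and the relevant local times are still the initial ones $\ell_j$ if $j$ has not yet been visited — in general $L(j,\tau)\ge\ell_j$, and since $w$ is increasing $w(L(j,\tau))\ge w(\ell_j)$, which is the \emph{wrong} inequality.

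So the genuinely right statement is that the waiting time is at least $\min_{j\sim x}\frac{1}{w(L(j,\cdot))}\chi_\cdot$ and to lower-bound it I need an \emph{upper} bound on $w(L(j,\cdot))$; this is available precisely when $j$ has small local time, which holds the very first time the walker steps onto a fresh part of the graph. Thus I would instead prove non-explosion directly: let $T_r$ be the first hitting time of distance $r$ (as a real time, $T_r=\tau_{\sigma_r}$). On $\{T_r<\infty\}$, at the step realizing the first visit to distance $r$ via some new vertex $x\in\partial\Bcal_{r-1}(\rho)$, I bound $T_{r+1}-T_r$ below by noting the walker, before reaching distance $r+1$, must at some point be at a vertex at distance $r$ whose all neighbours sit in $\Bcal_{r+1}(\rho)$; taking the exponential clock governing the jump to distance $r+1$ along a fresh edge, whose target $v\in\partial\Bcal_r(\rho)$ has local time exactly $\ell_v$ at that instant, the corresponding clock contributes at least $\frac{1}{w(\ell_v)}\chi\ge \frac{1}{\max_{v\in\partial\Bcal_r(\rho)}w(\ell_v)}\chi$ for an independent mean-one exponential $\chi$. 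Summing, $T_\infty\ge\sum_r \frac{1}{\max_{v\in\partial\Bcal_r(\rho)}w(\ell_v)}\chi_r$ with the $\chi_r$ independent; by \eqref{non-ex} and a Borel–Cantelli / three-series argument this sum is a.s. infinite, so distance $\to\infty$ takes infinite time. Finally, explosion without escaping to infinity is impossible because in any fixed finite ball $\Bcal_R(\rho)$ with $w$ locally bounded, the jump rates are bounded (finitely many vertices, local times would have to be bounded too on a finite-time explosion), giving a finite-rate pure-jump process that cannot explode — a standard domination by a Poisson process. Combining, $\tau_\infty=\infty$ a.s.

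The main obstacle I anticipate is making the ``first fresh clock'' coupling rigorous: one must verify that the exponential variable $\chi^{\ssup{x,v}}_{\gamma_v}$ used at the moment the walker first steps from distance $r$ toward a not-yet-visited $v\in\partial\Bcal_r(\rho)$ is (a) independent across different radii $r$ — which follows because distinct fresh edges, or distinct arrival indices $\gamma$, use distinct members of the i.i.d. family $(\chi^e_n)$ — and (b) actually a lower bound for the true increment, which requires care because the walker may oscillate and cross distance $r$ many times; the clean fix is to only extract, for each $r$, the single clock attached to the edge and index actually used on the \emph{last} crossing from $\Bcal_{r}(\rho)$ into $\partial\Bcal_{r}(\rho)$ before $T_{r+1}$, and argue these indices/edges are all distinct so the extracted exponentials are jointly independent and each independent of the randomness determining which edge was selected. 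Handling the non-escaping explosion scenario is comparatively routine via Poisson domination on each finite ball, using local boundedness of $w$ and the observation that a finite-time explosion confined to a finite vertex set keeps all relevant local times bounded.
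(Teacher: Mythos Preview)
Your outline matches the paper's proof almost exactly. The paper also argues by contradiction and splits $A=\{\tau_\infty<\infty\}$ according to whether $\delta(X_{\tau_n},\rho)\to\infty$; on the event $B$ that it does, it bounds $\tau_\infty$ below by the sum over $r$ of the holding time immediately preceding the first hit $T_r$ of $\partial\Bcal_r(\rho)$, uses that the target $X_{T_r}$ is fresh so the relevant clock is $\chi^{e_r}_1$ with rate $w(\ell_{X_{T_r}})\le w_r$, and then applies Kolmogorov's three-series theorem. Your independence concern is handled exactly as you propose: the edges $e_r$ have targets at pairwise distinct graph distances, hence are distinct directed edges, so the $\chi^{e_r}_1$ are distinct members of the i.i.d.\ family. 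The only substantive difference is on $A\setminus B$: rather than Poisson domination on a finite ball, the paper selects a vertex $x$ visited infinitely often and a neighbor $y$ receiving infinitely many jumps from $x$, and reads off from the strong construction that
\[
L(x,\tau_\infty)\ \ge\ \Bigl(\sup_{s\in[\ell_y,\,L(y,\tau_\infty)]}w(s)\Bigr)^{-1}\sum_{k\ge1}\chi^{(x,y)}_k\ =\ \infty,
\]
contradicting $L(x,\tau_\infty)\le\ell_x+\tau_\infty<\infty$. Both routes for this half are valid; the paper's exploits the explicit Rubin-type clocks a bit more directly, while yours is the more generic finite-state non-explosion argument.
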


\begin{proof}
We reason by contradiction. Let 
$$\tau_{\infty }:= \lim_{n \to \infty } \tau _{n}, \quad A: = \{\tau_{\infty} < \infty \}\quad\text{and}\quad B:= \big\{\lim_{n \to \infty } \delta(X_{\tau _{n}},\rho) = \infty \big\}.$$ Assume that
${\mathbb{P}}(A) > 0$. Observe that 
\begin{align}
\label{eq:int1} L(x, \tau _{\infty }) < \ell _{x}+ \tau
_{\infty }<\infty \quad\text{on $A$}, \text{ for all } x \in {{V}}.
\end{align}
Set $T_r=\inf\{ \tau_n : X_{\tau_n}\in \partial\mathcal{ B}_{r}(\rho)\}.$
On $B$, we must have $T_r<\infty$ for all $r\in \N$ and thus
\begin{equation*}
   \begin{aligned}
\tau _{\infty }\ge \sum_{r=1}^{\infty }
\frac{1}{w(\ell_{X_{T_r}})} \chi^{
\left(X_{T_r-1}, X_{T_r}\right)}_{1}.
\end{aligned} 
\end{equation*}
Recall that $d=\sup_{v\in V}\deg(v)<\infty$. Note that there exist independent random variables $(U_r)_{r\in\N}$, where $U_r$ is exponential distributed with parameter $w_r:=d\max_{v\in\partial \mathcal{B}_r(\rho)}w(\ell_v)$, such that $$\chi^{
\left(X_{T_r-1}, X_{T_r}\right)}_{1}/{w(\ell_{X_{T_r}})}\ge U_r,$$ for each $r\in \N$. Hence
\begin{align}\label{eq:explll}
  \tau _{\infty }  \ge  \sum_{r=1}^{\infty } U_r \ \text{ on }  B.
\end{align}
If $w_{*}:=\inf_{r\in \N}w_r=0$, it immediately follows
from the first  inequality in \eqref{eq:explll} that $\tau _{\infty }=\infty $. On the other hand,
if $w_*>0$,
we have
\begin{align*}
\sum_{r=1}^{\infty } {
\mathbb{E}} [ U_r \1_{\{U_i\le 1\}}]& =
\sum_{r=1}^{\infty }\frac{1}{w_r} \bigl(1-
\bigl(w_r+1\bigr)e^{-w_r} \bigr) \ge K \sum_{x=0}^{\infty }\frac{1}{w_r}=
\infty,
\end{align*}
where
$K:=\inf_{u\in [w_{*},\infty )}  (1-(u+1)e^{-u})>0$ and the last step follows from \eqref{non-ex}. By applying
Kolmogorov's three-series theorem, we  have that a.s.
$\sum_{r\in \N}
U_r =\infty$. Hence,
$\tau _{\infty }=\infty $ a.s. on $B$, which implies
$P(B\cap A)=0$.
The latter implies that  on the event $A$, there exists, a.s., a vertex $x$ which is visited infinitely often by
$(X_{\tau _{n}})_{n \in {\mathbb{Z}}^{+}}$. Notice that there exists a neighbour $y$ of $x$ such that the process jumps from $x$ to $y$  infinitely often.  Using  \eqref{eq:int1} applied to $L(y, \tau _{\infty })$ and the local boundedness of $w$, we have
that
\begin{align*}
L(x, \tau _{\infty }) \ge \frac{1}{\sup_{t\in [\ell_y, L(y,\tau_{\infty}))}w(t)} \sum
_{k=1}^{\infty }\chi ^{\ssup{x,y}}_{k} =
\infty,\quad \text{a.s., on $A$},
\end{align*}
which contradicts~\eqref{eq:int1}.
\end{proof}

\section{Finite range of strongly vertex-reinforced jump process}\label{sec:finrang}

In this section, we aim to prove the following result which implies Theorem \ref{thm:main}(a).

\begin{theorem}\label{them:loc}
Let $\X=(X_t)_{t\in\R_+}$ be a VRJP$(\ell,w)$ starting from $X_0=\rho$ with some fixed vertex $\rho\in V$. Suppose that  Assumption \ref{assump} is fulfilled. Let
\begin{equation}\label{hitting}
    T_{k} := 
  \inf \big\{t\in\R_+ \colon \mbox{ both } X_t \in\partial  \mathcal{B}_{k+1}(\rho)\text{ and $X_t$ is incident to $\partial  \mathcal{B}_{k+2}(\rho)$}\big\}.\end{equation}
Then there exists a constant $\gamma\in(0,1)$ such that for each $k\in \N$,
$$\P(T_{3k}<\infty)\le \gamma^{k}.$$
As a result, the process $\X$ gets stuck on a finite number of vertices with probability 1. 
\end{theorem}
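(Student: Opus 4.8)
The plan is to deduce the qualitative statement from the quantitative one, and to obtain the bound $\P(T_{3k}<\infty)\le\gamma^k$ by a one-scale recursion.

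First I would observe that the geometric bound gives finite range immediately: since $\sum_{k\ge 0}\P(T_{3k}<\infty)\le\sum_k\gamma^k<\infty$, Borel--Cantelli yields that a.s. $T_{3k}=\infty$ for all large $k$. If the range of $\mathbf{X}$ were infinite, then, $\mathcal G$ being connected with bounded degree, the process would visit vertices arbitrarily far from $\rho$; being c\`adl\`ag with nearest-neighbour jumps, before first reaching distance $3k+3$ it must occupy an adjacent vertex at distance $3k+2$, which is then incident to $\partial\mathcal B_{3k+2}(\rho)$, so that $T_{3k}<\infty$ for every $k$ --- a contradiction. Next, using the strong construction of Section~\ref{Se:strongconstr}, on $\{T_{3k}<\infty\}$ the conditional law of $(X_{T_{3k}+t})_{t\ge 0}$ given $\mathcal F_{T_{3k}}$ is that of a $\mathrm{VRJP}$ started from $X_{T_{3k}}$ with the current local-time profile $(L(v,T_{3k}))_{v\in V}$, run with the clocks not yet consumed (which are conditionally i.i.d.\ exponential). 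The recursion will come from
\begin{equation}\label{eq:onescale}
\P\big(T_{3(k+1)}<\infty\mid\mathcal F_{T_{3k}}\big)\le\gamma\qquad\text{a.s. on }\{T_{3k}<\infty\},
\end{equation}
for a constant $\gamma=\gamma(d,\ell_*,\ell^*,w)\in(0,1)$ independent of $k$ and of $\mathcal G$. Indeed, since $\{T_{3(k+1)}<\infty\}\subseteq\{T_{3k}<\infty\}$, taking expectations in \eqref{eq:onescale} gives $\P(T_{3(k+1)}<\infty)\le\gamma\,\P(T_{3k}<\infty)$, and iterating from $\P(T_0<\infty)\le 1$ yields $\P(T_{3k}<\infty)\le\gamma^k$.

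The heart of the argument is then \eqref{eq:onescale}. On $\{T_{3k}<\infty\}$, at time $T_{3k}$ the process sits at a vertex $z$ with $\delta(z,\rho)=3k+2$; every vertex at distance $\ge 3k+3$ from $\rho$ is still unvisited (to visit one it would have had to occupy an adjacent vertex at distance $3k+2$ first, triggering $T_{3k}$), so those vertices have local times in $[\ell_*,\ell^*]$, while all local times are $\ge\ell_*$. For $T_{3(k+1)}<\infty$ the process must afterwards reach distance $3k+5$, hence it must first occupy a vertex of the (at that instant still fresh) sphere $\{\delta(\cdot,\rho)=3k+3\}$, then one of $\{\delta(\cdot,\rho)=3k+4\}$, then one of $\{\delta(\cdot,\rho)=3k+5\}$. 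I would bound the probability of this triple crossing uniformly in two moves. First, reduce to the worst case in which the interior $\{\delta(\cdot,\rho)\le 3k+2\}$ is also unreinforced: extra local time at vertices closer to $\rho$ only increases the rates that pull the process back inward, hence can only decrease the crossing probability (made rigorous by a monotone coupling of the driving clocks). Second, with the whole relevant window fresh, analyse the exponential clocks Rubin-style: to cross three consecutive fresh spheres the process must, on three distinct occasions, jump outward to a fresh vertex at bounded rate $\le w(\ell^*)$ rather than back toward a neighbour whose local time it has itself been building up; integrating the resulting outward-choice probabilities against the accumulated interior local times and using $1/w\in\mathsf{L}^{1}([\ell_*,\infty))$ controls their product by a constant $\gamma<1$ depending only on $d$, $\ell_*$, $\ell^*$ and $\int_{\ell_*}^{\infty}\mathrm du/w(u)$.

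The main obstacle I expect is exactly this uniform one-scale bound: quantifying the Rubin-type competition between the (at most $d$, each of bounded rate) outward jumps to fresh vertices and the inward pull tightly enough to get a constant \emph{strictly below $1$ and uniform over all bounded-degree graphs, all scales, and all admissible (arbitrarily reinforced) interior configurations}. The choice of three spheres is what buys this uniformity: on graphs such as large stars a one- or two-sphere crossing can have probability arbitrarily close to $1$, whereas strong reinforcement ($1/w$ integrable) forces the process to commit within three steps, which is what keeps $\gamma$ bounded away from $1$.
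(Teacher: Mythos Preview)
Your high-level structure is correct and matches the paper: Borel--Cantelli gives finite range from the geometric bound, and the geometric bound follows from a one-scale recursion of the form \eqref{eq:onescale}. The gap is in how you obtain the one-scale bound.

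Two concrete problems. First, the monotonicity reduction is unjustified. You claim that replacing the (arbitrarily reinforced) interior local times by their initial values can only increase the crossing probability, ``made rigorous by a monotone coupling of the driving clocks''. VRJP does not admit such a coupling: the reinforcement is path-dependent, and lowering interior local times alters the whole trajectory in ways that are not pathwise comparable. For instance, with a fresh interior the process dwells longer at boundary vertices, raising \emph{their} local times, which then increases the pull back to the boundary from exterior vertices; the net effect on the crossing probability is not monotone in any obvious sense. The paper never attempts such a reduction. Second, your ``Rubin-style'' crossing analysis is too vague to be a proof. The process can attempt the three-sphere crossing infinitely often --- go out, come back, try again --- so $\{T_{3(k+1)}<\infty\}$ is not a product of three independent Bernoulli outward choices. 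You must show that with positive probability the process \emph{never} crosses, and that requires controlling all attempts simultaneously, not just three.

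What the paper does instead is prove two auxiliary results and combine them without any monotonicity. Proposition~\ref{cutset} shows that the local time accumulated at a cut-set before first crossing it has an exponential tail uniform in the scale. Proposition~\ref{lem:loc1} shows that if the process starts at a vertex $v$ with $\max_{u\in\mathcal B_2(v)}L(u,\cdot)\le M$, then it gets stuck forever in $\mathcal B_1(v)$ with probability at least $p(M)>0$ depending only on $M$, $d$ and $w$; the proof is the precise form of your Rubin intuition, an induction showing that the total time spent at each neighbour of $v$ is dominated by a convergent series $\sum_n \chi_n^{(u,v)}/w\big(\ell_*+c\sum_{j\le n}\xi_j\big)$ (using $1/w\in\mathsf L^1$), which is small with positive probability. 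On $\{T_{3k}<\infty\}$ the paper then shows that with probability bounded below: the cut-set local time is small (Proposition~\ref{cutset}); the next jump is outward to a vertex $v$ at distance $3k+3$ with controlled local time at $X_{T_{3k}}$; and the process gets stuck in $\mathcal B_1(v)$ (Proposition~\ref{lem:loc1} applies because $\mathcal B_2(v)$ consists of unvisited vertices and a few vertices with local time $\le 2\ell^*$). Being stuck at distance $\le 3k+4$ forces $T_{3(k+1)}=\infty$. The exterior is already fresh because it has not been visited, so no comparison with a fresh-interior process is needed; this is also the real reason three spheres suffice (one needs $\mathcal B_2$ of the landing vertex to be controllable), not the star-graph heuristic you give.
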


Throughout this section we always suppose that 
Assumption \ref{assump} is fulfilled. In order to demonstrate Theorem~\ref{them:loc}, we first prove some preliminary results.  

\begin{proposition}\label{lem:loc1} Assume that $X_0=v$ and $\max_{u\in \mathcal{B}_2(v)}\ell_u\le M$. Let
\begin{align}\label{sigma.G}
\mathcal{G}_v=\sigma \Big(\chi_{n}^{e} : n\in \N,  e\in \vec{E}\setminus\{ (v,u), (u,s) \text{ for } v\sim u \text{ and } s\sim u \} \Big).
\end{align}
 Then there exists a deterministic positive constant $p(M)>0$ depending on only  $M$ such that
$$\P\big( X_t \in \mathcal{B}_1(v), \forall t\in \R_+ \;|\; \mathcal{G}_v \big)\ge p(M).$$
\end{proposition}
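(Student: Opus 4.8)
The statement asks for a uniform lower bound on the probability that the VRJP, started at $v$, never leaves the closed ball $\mathcal{B}_1(v)$, conditionally on the $\sigma$-algebra $\mathcal{G}_v$ generated by all the exponential clocks *except* those attached to edges inside $\mathcal{B}_2(v)$ emanating from $v$ and from its neighbours. The key point is that the event $\{X_t\in\mathcal{B}_1(v)\ \forall t\}$ is measurable with respect to the clocks $\chi^{(v,u)}_n$ and $\chi^{(u,s)}_n$ (for $v\sim u$, $s\sim u$, possibly $s=v$) *provided the process never leaves $\mathcal{B}_1(v)$* — because while the walk stays in $\{v\}\cup\mathcal{N}_v$, the only clocks that can ring are those on edges $(v,u)$ with $u\sim v$ and, when the walk sits at a neighbour $u$, the clocks $(u,s)$ with $s\sim u$; the walk stays inside $\mathcal{B}_1(v)$ exactly when every such clock at $u$ that rings "first" points back toward $v$ rather than toward $\partial\mathcal{B}_1(v)$.

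First I would use the strong construction of Section \ref{Se:strongconstr} to describe, on the event that the walk has so far remained in $\mathcal{B}_1(v)$, the law of the next jump in terms of the minima of $\frac{1}{w(L(j,\tau_n))}\chi^{(i,j)}_{\gamma_j}$ over $j\sim i$, where $i=X_{\tau_n}\in\mathcal{B}_1(v)$. The plan is to build, clock-by-clock, a sufficient event: starting from $v$, the first jump goes to some neighbour $u_1$; from $u_1$ the next jump returns to $v$ before any clock toward $\partial\mathcal{B}_1(v)$ rings; and so on forever. Since there are at most $d$ neighbours, and when the walk is at a neighbour $u$ there are at most $d-1$ "bad" directions (toward $\partial\mathcal{B}_1(v)$) versus the one "good" direction back to $v$, I would lower-bound at each return step the conditional probability of "go back to $v$" by a quantity bounded below using the monotonicity of $w$ (Assumption \ref{assump}(ii)): the relevant clock rates are $w(L(j,\tau_n))$, and because $L(v,t)\to\infty$ while the $L(j,\cdot)$ for the bad neighbours $j$ stay bounded by what they accumulate, the drift is increasingly in favour of returning. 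The upshot should be that, conditionally on $\mathcal{G}_v$, the probability of staying in $\mathcal{B}_1(v)$ dominates the survival probability of an explicit sub-process whose "escape" probability at the $n$-th visit to a neighbour decays fast enough to be summable, giving a positive infinite product.

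More concretely, I would exploit the following: fix a realisation of $\mathcal{G}_v$. On the event of never leaving, $v$ is visited infinitely often and $L(v,t)\uparrow\infty$; whenever the walk is at a neighbour $u$ at some jump time $\tau_n$, the probability it jumps to a vertex $s\in\mathcal{N}_u$ outside $\mathcal{B}_1(v)$ rather than back to $v$ is at most $\sum_{s\sim u,\, s\notin\mathcal{B}_1(v)} \frac{w(L(s,\tau_n))}{w(L(v,\tau_n))} \le \frac{(d-1)\,w(M + t_{\text{bad}})}{w(L(v,\tau_n))}$, where $t_{\text{bad}}$ is the (a priori bounded, on the good event) total local time accrued at bad vertices. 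Since $1/w\in\mathsf{L}^1$, $w$ grows fast; and $L(v,\tau_n)$ grows at least linearly in the number of returns (each excursion to a neighbour and back takes a positive amount of clock-time at $v$), so these escape probabilities are summable along the good event, and $\prod_n(1-p_n)>0$. Making this into a clean conditional lower bound $p(M)$ independent of the $\mathcal{G}_v$-realisation is the delicate part: I would do it by constructing an explicit dominating event phrased purely in the clocks $\chi^{(v,u)}_\bullet,\chi^{(u,s)}_\bullet$ — e.g. requiring that at the $n$-th visit to any neighbour the "return" clock reading beats all "escape" clock readings by a margin growing like $\log n$ — and showing this event has $\mathcal{G}_v$-conditional probability bounded below by a constant depending only on $M$, $d$, $\ell_*$ and $w$, hence only on $M$ once the graph-data is fixed.

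The main obstacle I anticipate is the circularity between "the walk stays in $\mathcal{B}_1(v)$" and "the local times behave as claimed": the linear growth of $L(v,\tau_n)$ and the boundedness of the bad local times are only available *on the very event we are trying to lower-bound*. The way around this is the standard one for such reinforcement arguments — do not condition on the good event, but instead build the good event incrementally and estimate each conditional one-step probability using only information already revealed (the clock readings used so far), so that the local-time bounds one needs at step $n$ are deterministic consequences of the steps $1,\dots,n-1$ having gone "well". Carrying this bookkeeping out carefully, and checking that the resulting product of one-step probabilities is bounded below uniformly in the $\mathcal{G}_v$-data, is where the real work lies; everything else is a direct computation with exponential random variables and the construction of Section \ref{Se:strongconstr}.
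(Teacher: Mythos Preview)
Your overall strategy---build a sufficient event in the clock variables and bound its probability from below uniformly---is correct and matches the paper's philosophy, and you correctly identify that the event of staying in $\mathcal{B}_1(v)$ is measurable with respect to (hence independent of) $\mathcal{G}_v$. But the paper's execution is considerably cleaner than the infinite-product-of-one-step-probabilities route you sketch, and you miss the simplification that makes it so.

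The key observation you do not exploit is this: as long as the walk has never escaped from $\mathcal{B}_1(v)$, the \emph{only} escape clock that matters at a neighbour $u$ toward any $s\in\mathcal{N}_u\setminus\{v\}$ is the \emph{first} one, $\chi_1^{(u,s)}$; it is never consumed, so the same random variable governs all visits to $u$. Consequently the event ``stay in $\mathcal{B}_1(v)$ forever'' is implied by the single condition that, for every $u\sim v$, the \emph{total} local time ever accrued at $u$ stays below $\min_{s\in\mathcal{N}_u\setminus\{v\}}\chi_1^{(u,s)}/w(\ell_s)$. There is no need to compare return and escape clocks visit-by-visit, and no ``margin growing like $\log n$'' is required.

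The paper then closes the argument in two steps you do not have. First, an induction shows that on the good event the local time at each neighbour stays below $2M$, so the waiting time at $v$ before the $k$-th jump is bounded below by $\xi_k/(w(2M)\deg(v))$ with $\xi_k$ i.i.d.\ $\mathrm{Exp}(1)$; this gives a deterministic-in-form lower bound on $L(v,\tau_{2k-1})$ and hence an explicit upper bound $S^{(u)}_n=\sum_{j\le n}\chi_j^{(u,v)}/w(\ell_*+\tfrac{1}{w(2M)\deg(v)}\sum_{k\le j}\xi_k)$ on the local time at $u$. Second, Lemma~\ref{lem.sum} (a direct computation with Poisson order statistics using $1/w\in\mathsf{L}^1$) shows $S^{(u)}_\infty<\infty$ a.s.\ and computes its Laplace transform, from which $\P\big(S^{(u)}_\infty\le M\wedge\min_s\chi_1^{(u,s)}/w(M)\big)$ is seen to be strictly positive with a bound depending only on $M$ (and the fixed graph data). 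Your summability heuristic is morally the same phenomenon, but your linear-growth claim for $L(v,\tau_n)$ holds only with high probability, not surely, and turning an a.s.-positive random product into a deterministic lower bound needs exactly the kind of explicit packaging the paper provides via $S^{(u)}_\infty$.
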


\begin{proof}
Recall that  $\mathcal{N}_v$ is the set of  neighbours of $v$. For each $n\in\N$ and directed edge $(x,y)$, let $N(x,y,t)$
be the number of jumps from $x$ to $y$ up to time $t$. 
For each $u\in \mathcal{N}_v$ and $n\in \N$, set $$\xi_n:= \deg(v)\cdot\min_{u\in \mathcal{N}_v}\chi^{(v,u)}_{N(v, u, \sigma_n(v))+1},$$ where $\sigma_n(v)$ is the $n$-th passage time to $v$. Then $(\xi_n)_n$ are independent exponentially distributed random variables with mean one. Define
$$S^{\ssup u}_n:=\sum_{j=1}^{n} \frac{\chi^{(u,v)}_{j}  }{w\left(\ell_*+\frac{1}{w(2M)\deg(v)}\sum_{k=1}^j \xi_k\right)}.$$
For $n\in \N$, set $$A_n :=\{X_{\tau_{2k}}=v \text{ and }X_{\tau_{2k+1}}\in \mathcal{N}_v \text{ for all } 0\le k\le n\}\subset \{X_{t} \in \mathcal{B}_1(v) \text{ for all } t\le  \tau_{2n+1}\}.$$ 
Note that 
$$\widetilde{A}_n:=\Big\{\sum_{k=1}^n\frac{\chi^{(u,v)}_{k}}{w(L(v,\tau_{2k-1}))}\le \min_{s\in \mathcal{N}_u\setminus\{v\}} \frac{\chi_{1}^{(u,s)}}{w(\ell_s)}\quad \text { for all }u\in \mathcal{N}_v \Big\}\subset A_n$$
and $\widetilde{A}_n$ is independent of $\mathcal{G}_v$. Additionally, we have that for all $n\in\N$, on the event $ \widetilde{A}_n\cap\big\{ L(u,\tau_{2n}) \le 2M,\ \forall u\in \mathcal{N}_v\big\}$, the following inequality holds
\begin{equation}\label{eq.Lv}
L(v,\tau_{2k+1})\ge \ell_*+\frac{1}{w(2M)\deg(v)}\sum_{j=1}^{k+1} \xi_j\quad\text{for all $0\le k\le n$}.
\end{equation} 
It follows from \eqref{eq.Lv} that for all $u\in \mathcal{N}_v$ and $n\in\N$,
\begin{align}\label{eq.Lu}L(u,\tau_{2n+2})\le \ell_u+ S^{\ssup u}_{n+1}\le  M+ S^{\ssup u}_{n+1}\quad \text{on}\quad \widetilde{A}_n\cap\big\{ L(u,\tau_{2n}) \le 2M,\ \forall u\in \mathcal{N}_v\big\}.\end{align}

We next show by induction that for all $n\in \N$,
\begin{equation}\label{eq.incl}B_n:=\Big\{ S^{\ssup u}_n \le M \wedge   \min_{s\in \mathcal{N}_u\setminus\{v\}} \frac{\chi_{1}^{(u,s)}}{w(M)},\ \forall u\in \mathcal{N}_v \Big\}\subset \widetilde{A}_n\cap \{L(u,\tau _{2n})\le 2M,\ \forall u\in \mathcal{N}_v\}.\end{equation}
Indeed, since $L(u, 0)=\ell_u\le M$ for all $u\in \mathcal{B}_2(v)$, we have that  $L(v,\tau_{1})\ge \ell_*+\frac{1}{w(2M)\deg(v)}\xi_1$ and thus $L(u,\tau_2)\le \ell_u  + S_1^{\ssup u}\le M+S_1^{\ssup u}$. It implies that 
$$B_1=\Big\{ S^{\ssup u}_1 \le M \wedge   \min_{s\in \mathcal{N}_u\setminus\{v\}} \frac{\chi_{1}^{(u,s)}}{w(M)},\ \forall u\in \mathcal{N}_v \Big\}\subset \widetilde{A}_1\cap\big\{ L(u,\tau_2)\le 2M\big\}.$$ Hence \eqref{eq.incl} holds for $n=1$. Assume that \eqref{eq.incl} holds up to some $n\in\N$. We notice that $$B_{n+1}\subset B_n\subset \widetilde{A}_n\cap\big\{ L(u,\tau_{2n})\le 2M, \ \forall u\in \mathcal{N}_v\big\}.$$
In virtue of \eqref{eq.Lu}, we thus have 
$$L(u,\tau_{2n+2})\le M+S_{n+1}^{\ssup u} \le 2M\quad\text{on}\quad B_{n+1}.$$ Furthermore, by reason of \eqref{eq.Lv}, we have that  $$\sum_{k=1}^{n+1}\frac{\chi^{(u,v)}_{k}}{w(L(v,\tau_{2k-1}))}\le S_{n+1}^{\ssup u} \le \min_{s\in \mathcal{N}_u\setminus\{v\}} \frac{\chi_{1}^{(u,s)}}{w(M)}\le  \min_{s\in \mathcal{N}_u\setminus\{v\}} \frac{\chi_{1}^{(u,s)}}{w(\ell_s)} \quad\text{on}\quad B_{n+1}.$$
Hence $B_{n+1}\subset \widetilde{A}_{n+1}\cap \big \{L(u,\tau_{2n+2})\le 2M\big\}$. By the principle of induction, \eqref{eq.incl} holds for all $n\in\N$.

On the other hand, we notice that $(\xi_j, \chi_j^{\ssup{u, v}})_{j\in \N, u\in \mathcal{N}_v}$ are i.i.d. exponential random variables with parameter 1. By virtue of Lemma~\ref{lem.sum}, $S_{n}^{\ssup u}$ converges almost surely to a finite random variable $S_{\infty}^{\ssup u}$ as $n\to\infty$. Hence 
$$\Big\{ S^{\ssup u}_{\infty} \le M \wedge \min_{s\in \mathcal{N}_u\setminus\{v\}} \frac{ \chi_{1}^{(u,s)}}{w(M)}, \ \forall u\in \mathcal{N}_v \Big\}\subset \{X_t \in B_1(v), \forall t\ge 0 \}.$$
We thus have
\begin{align*}\P(\X \text{ gets stuck at } B_1(v)\ |\ \mathcal{G}_v ) &\ge \P\Big( S^{\ssup u}_{\infty} \le M \wedge \min_{s\in \mathcal{N}_u\setminus\{v\}} \frac{ \chi_{1}^{(u,s)}}{w(M)}, \ \forall u\in \mathcal{N}_v \Big)\\
& = \E\Big[\prod_{u\in \mathcal{N}_v} \P\Big(  \min_{s\in \mathcal{N}_u\setminus\{v\}} \frac{ \chi_{1}^{(u,s)}}{w(M)}\ge S^{\ssup u}_{\infty}  \  | \  S^{\ssup u}_{\infty}\Big)\1_{\{S^{\ssup u}_{\infty} \le  M\}} \Big]
\\ & = \prod_{u\in \mathcal{N}_v} \E\left[ e^{-(\deg(u)-1)w(M) S^{\ssup u}_{\infty}}\1_{\{S^{\ssup u}_{\infty} \le  M\}}\right]\\ & \ge \prod_{u\in \mathcal{N}_v} e^{-(d-1)w(M)M}\P(S^{\ssup u}_{\infty} \le  M):= p(M).\end{align*}
Assuming that $S^{\ssup u}_{\infty}> M$ a.s. for some $u$, by virtue of Lemma~\ref{lem.sum}, we have $$\exp\Big(-w(2M)\deg(v)\int_{\ell_*}^{\infty} \frac{y\rmd x}{w(x)+y} \Big)= \E[e^{-y S^{\ssup u}_{\infty}}]< e^{-yM}, \quad \text{for all}\ y>0.$$ 
This however contradicts the fact that  $$\frac{w(2M)\deg(v)}{M}\int_{\ell_*}^{\infty} \frac{\rmd x}{w(x)+y}<1$$ for large enough $y$.  It follows that $p(M)>0$. 
\end{proof}

\begin{definition}\label{def.cutset}
We define a  \textbf{cut-set} $\mathcal{C}$ as a set of vertices such that if removed will split the graph into two disjoint 
vertex sets, say $V^{(0)}$ which is finite, connected and contains $\rho$, and $V^{(1)}=V\setminus (\mathcal{C}\cup V^{(0)})$ which is infinite. Moreover, we require that each vertex in $\mathcal{C}$ is neighbour to a vertex in $V^{(1)}$.
\end{definition}

 \begin{proposition}\label{cutset}
 Assume that $X_0=\rho$ and let $\mathcal{C}$ be a cut-set. Let $V^{(0)}$ and $V^{(1)}$ be respectively the finite and infinite vertex sets obtained by removing $\mathcal{C}$. Set
  $$T:= \inf\big\{t\in\R_+\colon  X_t \in  V^{(1)}\big\}.$$
Then a.s.
$$\P\big({L}(\mathcal{C},{T}) -   L(\mathcal{C},0)> u \ |\ T<\infty \big) \le e^{-  w(\ell_*) u }.$$
\end{proposition}
 
 \begin{proof}
Let $\xi(t):=\inf\{s\in\R_+: L(\mathcal{C}, s)-L(\mathcal{C},0)\ge t\}$ and
$$R(t) := \P\big(L(\mathcal{C},T) -  L(\mathcal{C},0)>  t\;|\;T<\infty\big)=\P(\xi(t)<  T\;|\;T<\infty).
 $$
By the definition of $\xi(t)$, we 
note that $X_{\xi(t)}\in \mathcal{C}$ on $\{\xi(t)<\infty\}$.
 Notice that \begin{align}\label{inq.xi}
\xi(t+h)\ge \xi(t)+h. \end{align}
Indeed, this is obvious when $\xi(t+h)=\infty$. If $\xi(t+h)<\infty$ we have
 $\xi(s)=L(\mathcal{C}^c,\xi(s))-L(\mathcal{C}^c,0)+s$ for all $0\le s\le t+h$. In this case, we have $$\xi(t+h)=L(\mathcal{C}^c,\xi(t+h))-L(\mathcal{C}^c,0)+t+h\ge L(\mathcal{C}^c,\xi(t))-L(\mathcal{C}^c,0) +t+h=\xi(t)+h.$$
Using the above observation, we have 
  \begin{align}\nonumber
  R(t)-R(t+h)& =\P\big(\xi(t)< T \le \xi(t+h)\;|\;T<\infty \big)\\
 \label{inq2} & \ge \P\big(\xi(t)<T, X_{\xi(t)+h} \in V^{(1)} \big)/\P(T<\infty) \\
\nonumber & =  \E\Big[\P\big(  X_{\xi(t)+h} \in V^{(1)} \   
| \ \mathcal{F}_{\xi(t)} \big) \1_{ \{\xi(t)< T \} }   \Big]/ \P(T<\infty)\\
\label{inq3}
  &\ge (w(\ell_*)h + o(h)) \P(\xi(t)< T\;|\;T<\infty )\\
\nonumber  & =  w(\ell_*) h R(t) + o(h),
\end{align}
in which:
\begin{itemize}
    \item the inequality \eqref{inq2} follows from the fact that   $$\{\xi(t)< T, X_{\xi(t)+h} \in V^{(1)}\}\subset \{ \xi(t)< T\le \xi(t)+h\}\subset \{\xi(t)< T\le \xi(t+h), T<\infty \},$$
    where the second inclusion follows from \eqref{inq.xi};
    \item in \eqref{inq3}, we use the fact that, on the event $\{\xi(t)<\infty\}$, $X_{\xi(t)}\in \Ccal$ and the process makes a jump from a vertex in $\mathcal{C}$ to a vertex in $V^{(1)}$ during the time interval $(\xi(t), \xi(t)+h]$ with probability at least $w(\ell_*)h+o(h)$. \end{itemize}
Hence a.s. $R'(t)\le -  w(\ell_*) R(t)$. It follows that a.s. $R(t)\le e^{- w(\ell_*) t}$. 
 \end{proof}

We now turn to the proof of Theorem~\ref{them:loc}.

 \begin{proof}[Proof of Theorem~\ref{them:loc}]
Set \begin{align*}
 \mathcal C_k&= \big\{ x\in \partial\mathcal B_k(\rho) \colon \delta(x, \partial\mathcal B_{k+2}(\rho)) =2\big\} ,\\
   \mathcal{D}_k &= \left\{ x\in \partial \mathcal{B}_{k+1}(\rho) :   x\text{ is not incident to any vertex in $\mathcal B_{k+2}(\rho)^c$}\right\} 
  \end{align*}
 Note that each vertex in $\mathcal C_k$ has both a neighbour in  $\mathcal B_k(\rho)$ and another in $\mathcal B_{k+1}(\rho)^c\setminus\mathcal{D}_{k}$.
Let 
\begin{align}\label{def.V} V^{\ssup 1}_k=\mathcal B_{k+1}(\rho)^c\setminus\mathcal{D}_{k}\quad\text{and}\quad V^{\ssup 0}_k=(\mathcal{C}_k\cup V^{\ssup 1}_k )^c.
\end{align} 
Note that $V^{\ssup 0}_k$  and $V^{\ssup 1}_k$  are respectively the finite and infinite vertex sets obtained by removing the vertices of $\mathcal{C}_k$ from the original graph. It is  clear that $V^{\ssup 0}_k$ is connected and contains $\rho$. Hence $(\Ccal_k)_{k\N}$ is a sequence of cut-sets according to Definition \ref{def.cutset}. We also notice that $\mathcal{C}_k\cup V_{k}^{\ssup 0} \subseteq  V_{k+1}^{\ssup 0}$ and  $\delta(\mathcal{C}_k, \mathcal{C}_{k+1}) \ge 2$ for each $k\in \N$.
The stopping time $T_k$ defined in \eqref{hitting} can be interpreted as the first hitting time of $V^{\ssup 1}_k$.

Let $T_k^-$ and $T_k^+$ be respectively the preceding and succeeding jumping times of $T_k$. In other words, $T_k^- = \sup\{\tau_j \colon \tau_j < T_k\}$ and  $T_k^+ = \inf\{\tau_j \colon \tau_j > T_k\}$. Notice that $X_{T_{k}^-}\in\mathcal{C}_{k}$ while $X_{T_{k}}\in \mathcal{\partial}\mathcal{B}_{k+1}(\rho)$ and is incident to a vertex in $\partial\mathcal{B}_{k+2}(\rho)$ which has never been visited by time $T_{k}$. Let 
\begin{align*}
A_k&= \{L(\mathcal C_k, T_{k})-L(\mathcal C_k, 0)\le 2\ell^*\},\\
B_k&=\{X_{T^+_k}\in \partial\mathcal B_{k+2}(\rho), L(X_{T_k}, T^+_k)\le 2\ell^*\}.
\end{align*}
In virtue of Proposition \ref{cutset}, we  have
$$\P\big( A_k \ | \ T_k<\infty \big) \ge 1-e^{-2w(\ell_*)\ell^*}.$$
Notice also that
$$\P\big( B_k \ | \ \{T_k<\infty\}\cap A_k \big) \ge \frac{w(\ell_*) }{w\big(2\ell^* )d+w(\ell_*)}(1-e^{-2w(\ell_*)\ell^*}),$$
where we recall that $d=\sup_{v\in V}\deg(v)<\infty$. On the other hand, using Proposition \ref{lem:loc1} 
we have
\begin{align*}\P\big( X_t \in \mathcal{B}_1(X_{T_k^+}), \forall t\ge T_k^+ \;| \; \{T_k<\infty\} \cap A_k\cap B_k \big)\ge p(2\ell^*)>0.
\end{align*}
We thus have
$$  \P\left(T_{3(k+1)} = \infty   \;|\; T_{3k}<\infty \right) \ge 
\frac{w(\ell_*) }{w\big(2\ell^* )d+w(\ell_*)}(1-e^{-2w(\ell_*)\ell^*})^2 p(2\ell^*):=1-\gamma\in (0,1).$$
Hence
\begin{align*}
    \P(T_{3k}<\infty) & =  \P(T_{3k}<\infty, T_{3(k-1)}<\infty) = \P(T_{3k}<\infty\ |\ T_{3(k-1)}<\infty)\P(T_{3(k-1)}<\infty) \\
    &\le \gamma \P(T_{3(k-1)}<\infty)\le  \gamma^k.
\end{align*}
By Borel-Cantelli lemma, there a.s. exists $k_0$ such that $T_{3k_0}=\infty$, yielding that $X_t\in V_{3k_0}^{\ssup 0}$ for all $t\in\R_+$. This completes the proof of the theorem. 
 \end{proof}

\section{Some preliminary results on martingales and Markov chains}

\subsection{Non-convergence theorem for semimartingales}\label{sec:nonconv}
Recall that a process is said to be of \textit{finite variation} if it is of bounded variation on each finite time interval with probability one. Let ${\bf \Upsilon}=(\Upsilon_t)_{t\in\R_+}$ be a c\`adl\`ag finite variation process. For each $t\in\R_+$, let $\Upsilon_{t-}=\lim_{s\uparrow  t}\Upsilon_s$ and $\Delta \Upsilon_t=\Upsilon_t-\Upsilon_{t-}$ be respectively the left limit and the size of the jump of ${\bf \Upsilon}$ at time $t$. 

Let ${\bf M}=(M_t)_{t\in\R_+}$ be a c\`adl\`ag square-integrable local martingale with finite variation. We denote by $\langle {\bf M}\rangle$ the \textit{angle bracket quadratic variation} of ${\bf M}$, i.e. the unique right-continuous predictable increasing process such that $\langle { \bf M}\rangle_0=0$ and ${\bf M}^2-\langle {\bf M}\rangle$ is a local martingale. 

\begin{definition}\label{def:good}
Consider a process  $\mathbf{Z} = (Z_t)_{t\in\R_+}$ and denote by $(\Fcal_t)_{t\in\R_+}$ its natural filtration. The process $\mathbf{Z}$ is {\bf good} if it can be decomposed as 
\begin{align}\label{Z.equ}Z_t=Z_0+  M_t+\int_0^{t}F_u\rmd u,\end{align}
where $(F_t)_{t\in\R_+}$ and $(M_t)_{t\in\R_+}$ are $(\mathcal{F}_t)_{t\in\R_+}$-adapted c\`adl\`ag finite variation stochastic processes on $\mathbb{R}$, $(M_t)_{t\in\R_+}$ is a martingale w.r.t. $(\mathcal{F}_t)_{t\in\R_+}$ such that $$\langle M\rangle_t=\int_0^t \Lambda_u \rmd u,$$ in which $(\Lambda_t)_{t\in\R_+}$ is a  positive $(\mathcal{F}_t)_{t\in\R_+}$-adapted  c\`adl\`ag process.
\end{definition}

In what follows, let $\varrho$ be a fixed real number and $(\Gamma_t)_{t\in\R_+}$ be a family of $(\mathcal{F}_t)_{t\in\R_+}$-adapted events such that  \begin{equation}\label{def.Gamma}\Gamma:=\bigcup_{n\in\N}\bigcap_{t\in [n,\infty)}\Gamma_t\subseteq \left\{\lim_{t\to\infty}Z_t=\varrho\right\}.\end{equation}
Furthermore, we assume that
\begin{equation}\label{intersection}\P\left(\big(\bigcap_{s\in [t,\infty) }\Gamma_s\big)\; \ominus\big(\bigcap_{q\in[t,\infty)\cap \mathbb Q}\Gamma_q\big)\right)=0,\end{equation}
where $\ominus$ stands for the symmetric difference of events.

In Section \ref{sec:f.graph}, we will apply Theorem~\ref{nonconvergence} below to strongly VRJP on finite graphs.

\begin{theorem} \label{nonconvergence}  Suppose that $\mathbf{Z}$ is good. Let  $(\alpha_t)_{t\in\R_+}$ and $(\beta_t)_{t\in\R_+}$ be $(\mathcal{F}_t)_{t\in\R_+}$-adapted non-increasing continuous processes satisfying the following:
\begin{align}
\label{lim.alpbeta}
\lim_{t\to \infty} \frac{\beta_t}{\sqrt{\alpha_t}}= 0
\end{align}
and there exist $(\mathcal{F}_t)_{t\in\R_+}$-adapted non-negative processes $(\widetilde{\alpha}_t)_{t\in\R_+}, (\widehat{\alpha}_t)_{t\in\R_+}, (\widetilde{\beta}_t)_{t\in\R_+}$  and a positive deterministic constant $\kappa$ such that
\begin{align}\label{alpha}
\int_t^{\infty} \widetilde{\alpha}_u\rmd u&\le \alpha_t \le\kappa \int_t^{\infty}\widehat{\alpha}_u\rmd u, \\
\label{beta}
\int_t^{\infty} \widetilde{\beta}_u\rmd u&\le \beta_t. 
\end{align}
Assume furthermore that for each $t\in\R_+$, on the event $\Gamma_t$ we have a.s. 
\begin{gather}
\label{F.ineq} |F_t|  \le \widetilde{\beta}_t, \\
\label{quadvar}  \widehat{\alpha}_t \le  \Lambda_{t}  \le \widetilde{\alpha}_t,\\
 \label{Z.bound}|Z_t|\le K, \text{ where $K$ is some positive deterministic constant and } \\
\label{jumpsize}|\Delta Z_s|\le \beta_s \quad\text{for all } t\le s \le U_t,
\end{gather}
where $U_t:=\inf\{s\ge t: \1_{\Gamma_s}=0\}$. Then  $\P\left(\Gamma\right)=0.$
\end{theorem}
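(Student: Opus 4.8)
## Proof Proposal for Theorem \ref{nonconvergence}

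The plan is to argue by contradiction: assume $\P(\Gamma)>0$, and derive a contradiction by showing that on $\Gamma$ the process $Z_t$ cannot actually converge to $\varrho$, despite \eqref{def.Gamma} forcing it to. The engine is a time-changed comparison of $Z$ with a Brownian-type process whose fluctuations, measured on the scale $\sqrt{\alpha_t}$, do not die out, while the drift contribution, controlled by $\beta_t$, is negligible on that same scale because of \eqref{lim.alpbeta}. The heuristic is that $\langle M\rangle$ is comparable to $\alpha_t$ (up to the constant $\kappa$, via \eqref{alpha} and \eqref{quadvar}), so the martingale increment of $Z$ over a ``residual lifetime'' has standard deviation of order $\sqrt{\alpha_t}$; the drift increment $\int_t^\infty F_u\,\rmd u$ is bounded in absolute value by $\int_t^\infty\widetilde\beta_u\,\rmd u\le\beta_t=o(\sqrt{\alpha_t})$; and the jumps are bounded by $\beta_s$, also negligible. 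If $Z_t\to\varrho$ on a set of positive probability, then on that set $M_t$ converges, hence $\langle M\rangle_\infty<\infty$; but a convergent martingale with a lower bound $\int_t^\infty\widetilde\alpha_u\,\rmd u\le\alpha_t$ on its residual quadratic variation, combined with an independence/conditional-variance argument, forces genuine fluctuations of size $\asymp\sqrt{\alpha_t}$ around any putative limit — contradicting convergence once $\beta_t/\sqrt{\alpha_t}\to0$.

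Here is the order in which I would carry this out. First, reduce to a clean event: by \eqref{intersection} we may replace the uncountable intersection $\bigcap_{s\in[t,\infty)}\Gamma_s$ by the countable one $\bigcap_{q\in[t,\infty)\cap\Q}\Gamma_q$ up to a null set, so that $\Gamma=\bigcup_n\Gamma^{(n)}$ with $\Gamma^{(n)}:=\bigcap_{q\ge n}\Gamma_q$ measurable; if $\P(\Gamma)>0$ then $\P(\Gamma^{(n)})>0$ for some $n$, and we fix such an $n$. Second, localize: on $\Gamma^{(n)}$, for all $t\ge n$ the bounds \eqref{F.ineq}, \eqref{quadvar}, \eqref{Z.bound}, \eqref{jumpsize} hold (with $U_t=\infty$), and \eqref{def.Gamma} gives $Z_t\to\varrho$ there. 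Third, from $Z_t\to\varrho$ and $|Z_t|\le K$, the decomposition \eqref{Z.equ} plus $|\int_t^\infty F_u\,\rmd u|\le\int_t^\infty\widetilde\beta_u\,\rmd u\le\beta_t\to0$ shows $M_t$ converges a.s.\ on $\Gamma^{(n)}$; the standard $L^2$-martingale dichotomy then yields $\langle M\rangle_\infty=\int_0^\infty\Lambda_u\,\rmd u<\infty$ a.s.\ on $\Gamma^{(n)}$, hence $\int_t^\infty\widehat\alpha_u\,\rmd u\to0$ and, via \eqref{alpha}, $\alpha_t\to0$ on $\Gamma^{(n)}$. Fourth — the crux — establish a lower bound on fluctuations: for $t$ large, consider the martingale increment $M_\sigma-M_t$ up to a stopping time $\sigma=\sigma(t)$ chosen so that $\langle M\rangle_\sigma-\langle M\rangle_t$ equals (or is comparable to) $\alpha_t$; using \eqref{quadvar} and \eqref{alpha} such a $\sigma$ exists and is finite on $\Gamma^{(n)}$, and by optional stopping $\E[(M_\sigma-M_t)^2\mid\F_t]\asymp\alpha_t$. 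One then shows that with conditional probability bounded below, $|M_\sigma-M_t|\ge c\sqrt{\alpha_t}$ for a universal $c>0$ — e.g.\ via a Paley–Zygmund type bound after controlling the fourth moment or the jump sizes (here \eqref{jumpsize}, $|\Delta Z_s|\le\beta_s=o(\sqrt{\alpha_s})$, ensures $M$ has no large jumps relative to the scale), or by a more elementary stopping argument showing $M$ exits the interval $[M_t-c\sqrt{\alpha_t},M_t+c\sqrt{\alpha_t}]$ before its residual quadratic variation is exhausted. Combining with $|Z_\sigma-M_\sigma-(Z_t-M_t)|\le\beta_t=o(\sqrt{\alpha_t})$ gives $|Z_\sigma-Z_t|\ge\tfrac{c}{2}\sqrt{\alpha_t}$ with probability bounded below — but on $\Gamma^{(n)}$ the left side tends to $0$, and a Borel–Cantelli / conditional second Borel–Cantelli argument along a sequence $t_k\uparrow\infty$ shows this happens infinitely often a.s.\ on $\Gamma^{(n)}$, contradicting $Z_t\to\varrho$. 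Hence $\P(\Gamma^{(n)})=0$ for every $n$, so $\P(\Gamma)=0$.

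The main obstacle, and where the argument needs the most care, is Step four: turning the comparability $\langle M\rangle_\sigma-\langle M\rangle_t\asymp\alpha_t$ into an \emph{anti-concentration} statement $\P(|M_\sigma-M_t|\ge c\sqrt{\alpha_t}\mid\F_t)\ge c'$ that is robust to $M$ being a general finite-variation martingale (a pure-jump martingale) rather than a continuous one. The safe route is to first pass to a continuous martingale by a Dambis–Dubins–Schwarz time change — possible because $\langle M\rangle_t=\int_0^t\Lambda_u\,\rmd u$ is absolutely continuous, so $M$ embeds into a Brownian motion up to the quadratic-variation clock, modulo handling the jumps, which \eqref{jumpsize} makes asymptotically negligible on the $\sqrt{\alpha}$ scale — and then invoke the reflection principle for Brownian motion, which gives the anti-concentration bound with an absolute constant. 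A secondary technical point is ensuring the stopping time $\sigma(t)$ realizing the quadratic-variation increment $\alpha_t$ is $\F$-measurable and a.s.\ finite on $\Gamma^{(n)}$; this follows from \eqref{quadvar} giving $\Lambda_u\ge\widehat\alpha_u$ and \eqref{alpha} giving $\kappa\int_t^\infty\widehat\alpha_u\,\rmd u\ge\alpha_t$, so the clock $\langle M\rangle$ accumulates at least $\alpha_t/\kappa$ more mass after time $t$, which suffices after adjusting $c$ by $\kappa$. The remaining estimates — that the drift and jump contributions are $o(\sqrt{\alpha_t})$ — are immediate from \eqref{F.ineq}, \eqref{beta}, \eqref{jumpsize} and \eqref{lim.alpbeta} and require no further ideas.
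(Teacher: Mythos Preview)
The paper does not give a self-contained proof here: it observes that the result is a minor variant of Theorem~5.5 in \cite{CNV2022}, checks that the new hypotheses (in particular \eqref{Z.bound} together with \eqref{quadvar} and \eqref{alpha}) still supply the boundedness needed for the optional sampling steps there --- namely $|Z_s|\le K$ and $\int_t^{U_t}\Lambda_s\,\rmd s\le\alpha_0$ for $t\le s\le U_t$ --- and defers the rest to that reference. Your overall architecture (reduce to $\Gamma^{(n)}$, bound the drift by $\beta_t$, obtain an anti-concentration estimate for the martingale increment on the scale $\sqrt{\alpha_t}$, and conclude by a conditional Borel--Cantelli or zero--one argument) is the standard scheme for such non-convergence theorems and is indeed what underlies the cited proof.

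There are, however, genuine gaps in your Step four. First, your ``safe route'' via Dambis--Dubins--Schwarz fails outright: by Definition~\ref{def:good}, $M$ is a \emph{finite-variation} c\`adl\`ag martingale, hence purely discontinuous, and such a process is never a time-changed Brownian motion; absolute continuity of the angle bracket is irrelevant (the compensated Poisson process has $\langle M\rangle_t=t$). The anti-concentration bound must instead come from a direct moment estimate --- this is exactly what \eqref{jumpsize} is for, as it controls the jumps of $M$ relative to $\sqrt{\alpha}$ and hence the higher conditional moments of $M_{\sigma\wedge U_t}-M_t$, so that a Paley--Zygmund or Freedman-type inequality applies. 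Second, you cannot carry out optional stopping ``on $\Gamma^{(n)}$'', since that event is not $\F_t$-measurable; the conditional second moment must be computed for the stopped increment $M_{\sigma\wedge U_t}-M_t$ on the adapted event $\Gamma_t$, and only afterwards does one use that $U_t=\infty$ on $\Gamma^{(n)}$. This is precisely the role of $U_t$ in the statement and the point the paper singles out when justifying optional sampling. Finally, your concluding line is not yet a contradiction: on $\Gamma^{(n)}$ both $|Z_{\sigma(t)}-Z_t|$ and $\sqrt{\alpha_t}$ tend to $0$, so ``$|Z_{\sigma(t)}-Z_t|\ge\tfrac{c}{2}\sqrt{\alpha_t}$ infinitely often'' does not by itself contradict $Z_t\to\varrho$. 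The actual contradiction is obtained via L\'evy's zero--one law: one shows that on $\Gamma_t$ the conditional probability given $\F_t$ that $Z$ exits a band of width of order $\sqrt{\alpha_t}$ before time $U_t$ is bounded below by some $c'>0$, which forces $\limsup_t\P(\Gamma\mid\F_t)\le 1-c'$ and hence $\P(\Gamma)=0$.
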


Roughly speaking, the theorem states that the event $\Gamma$ cannot happen if the tail of the finite variation term decreases to $0$ faster than the tail of the martingale term. We are interested in the case when $\Gamma$ is an event which implies (or is equivalent to) the convergence of the process $\mathbf Z$ to a random variable having atoms. 

The above theorem is a modified version of Theorem 5.5 in \cite{CNV2022}.  We note that our current setting of Definition \ref{def:good} is a special case of the one in Definition 2(i)-(ii)-(iii) in \cite{CNV2022}, in which we choose $G_t=1$ and $A_t=t$. Note that Definition 2(iv) in \cite{CNV2022} is necessary for the application of the optional sampling theorem in (51)-(52) in \cite{CNV2022}. This still holds in our case by using \eqref{Z.bound}, \eqref{quadvar} and \eqref{alpha}, which implies that for $t\le s\le U_t$,
$$|Z_s| \le  K\quad\text{and}\quad \int_t^{U_t}\Lambda_s\rmd s \le \alpha_0.$$
The conditions \eqref{alpha}, \eqref{beta}, \eqref{F.ineq} and \eqref{jumpsize} can be compared with the conditions (41) and (42) in \cite{CNV2022}. Note that we only require \eqref{F.ineq}, \eqref{quadvar}, \eqref{Z.bound} and \eqref{jumpsize} hold on $\Gamma_t$.  Although our current assumptions are sightly weaker than the ones in \cite{CNV2022}, they do not alter the proof's context. 

It is worth mentioning that there are various similar versions of non-convergence theorems for continuous-time processes which haven been recently studied in \cite{NR2021} and \cite{RT2023}. One can also apply Theorem 2.1.1 in \cite{RT2023} to prove Theorem \ref{nonconvergence}. 

\subsection{Solution to Poisson's equation of an irreducible Markov chain}\label{sec:poiseq}
Assume that $H=(H_{ij})_{1\le i,j\le n}$ is the infinitesimal generator of an irreducible continuous-time Markov chain on the state space $V=\{1,2,\dots, n\}$. Denote by $\pi=(\pi_j)_{1\le j\le n}$ the invariant probability measure of $H$, i.e. $\pi H=0$ and $\pi\cdot\mathbf{1}^{\textsf{T}}=1$, where $\mathbf{1}$ is the row vector with all unit entries. Define $\Pi:=\mathbf{1} ^{\textsf{T}}\cdot \pi$, i.e. the matrix where each row is equal to $\pi$.
Let $Q=(Q_{ij})_{1\le i, j\le n}$ be the $n\times n$ matrix given by 
\begin{equation}Q :=\int_{0}^{\infty}(\Pi-e^{tH})\rmd t.\end{equation}
We note that $$P(t)=e^{tH}:=\sum_{n=0}^{\infty}\frac{(tH)^n}{n!}$$
converges towards $\Pi$ at an exponential rate. Hence
$Q$ is well-defined. Moreover, $Q$ is a solution to the Poisson's equation
\begin{equation}I-\Pi= QH=HQ,\end{equation}
and  $Q\cdot \mathbf{1}^{\textsf T}=\mathbf{0}$.

\begin{proposition}\label{Pois.thm}
For $i, j \in V$, we have $$Q_{ij}=
\displaystyle \pi_j ({\pi}^{(j)}-\mathbf{e}_i^{(j)})\big(H^{(j)}\big)^{-1}\mathbf{1}^{\textsf T}   
$$
where ${\pi}^{(j)} =(\pi_{k})_{k\in V\setminus\{j\}}$ and $\mathbf{e}_i^{(j)}=(\1_{k=i})_{k\in V\setminus\{j\}}$ are the row vectors obtained by deleting the $j$-th entry from $\pi$ and $\mathbf{e}_i$ respectively; and $H^{(j)}=(H_{k,h})_{k,h\in V\setminus\{j\}}$ is the matrix obtained by deleting the $j$-th column and the $j$-th row from $H$. In particular, we have
$$Q_{jj}=
\displaystyle \pi_j {\pi}^{(j)}\big(H^{(j)}\big)^{-1}\mathbf{1}^{\textsf T}.$$
\end{proposition}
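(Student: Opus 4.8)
The plan is to prove the identity one column at a time: I fix $j\in V$ and show that the column vector $c=(c_i)_{i\in V}$ with $c_i=\pi_j(\pi^{(j)}-\mathbf{e}_i^{(j)})(H^{(j)})^{-1}\mathbf{1}^{\textsf{T}}$ coincides with $Q_{\cdot j}$. First I would record what characterises $Q_{\cdot j}$. Since $\pi e^{tH}=\pi$ (stationarity) and $\pi\Pi=\pi$, one has $\pi Q=\mathbf{0}$; and reading off column $j$ of $HQ=I-\Pi$ gives $H\,Q_{\cdot j}=\mathbf{e}_j^{\textsf{T}}-\pi_j\mathbf{1}^{\textsf{T}}$. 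Because $H$ generates an irreducible finite chain, its right kernel is $\mathrm{span}(\mathbf{1}^{\textsf{T}})$ and $\pi\mathbf{1}^{\textsf{T}}=1\neq0$, so any vector annihilated by $\pi$ and mapped by $H$ to $\mathbf{e}_j^{\textsf{T}}-\pi_j\mathbf{1}^{\textsf{T}}$ is unique; hence it suffices to check that $c$ satisfies $\pi c=0$ and $Hc=\mathbf{e}_j^{\textsf{T}}-\pi_j\mathbf{1}^{\textsf{T}}$. Before that I would also verify that $H^{(j)}$ is invertible — the standard fact that a generator restricted to the non-absorbed states is non-singular: if $H^{(j)}v=\mathbf 0$, extend $v$ to $\widetilde v\in\R^V$ by $\widetilde v_j=0$; then $(H\widetilde v)_i=0$ for $i\neq j$, so $H\widetilde v=\mu\,\mathbf{e}_j^{\textsf{T}}$ for a scalar $\mu$, and $0=\pi H\widetilde v=\mu\pi_j$ with $\pi_j>0$ forces $\mu=0$, whence $\widetilde v\in\ker H$ and $\widetilde v_j=0$ give $\widetilde v=\mathbf 0$.

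The core computation is short. Set $h:=(H^{(j)})^{-1}\mathbf{1}^{\textsf{T}}\in\R^{V\setminus\{j\}}$, extend it to $\widetilde h\in\R^{V}$ by $\widetilde h_j:=0$, and let $s:=\pi\widetilde h$ (equivalently $s=\pi^{(j)}h$, since $\widetilde h_j=0$). Using $\mathbf{e}_j^{(j)}=\mathbf 0$, one reads directly from the definition that $c_i=\pi_j(s-\widetilde h_i)$ for every $i$, i.e. $c=\pi_j\big(s\,\mathbf{1}^{\textsf{T}}-\widetilde h\big)$. The normalisation is then immediate: $\pi c=\pi_j\big(s\,\pi\mathbf{1}^{\textsf{T}}-\pi\widetilde h\big)=\pi_j(s-s)=0$. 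For the Poisson equation, since $H\mathbf{1}^{\textsf{T}}=\mathbf 0$ we have $Hc=-\pi_j H\widetilde h$, so everything reduces to the single identity $H\widetilde h=\mathbf{1}^{\textsf{T}}-\pi_j^{-1}\mathbf{e}_j^{\textsf{T}}$.

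To establish this last identity, note that for $i\neq j$ it holds by construction: $(H\widetilde h)_i=\sum_{k\neq j}H_{ik}h_k=(H^{(j)}h)_i=1$. The remaining entry $(H\widetilde h)_j$ is then pinned down by stationarity alone: from $\pi H=\mathbf 0$, $0=\pi(H\widetilde h)=\sum_{i\neq j}\pi_i+\pi_j(H\widetilde h)_j$, so $(H\widetilde h)_j=-(1-\pi_j)/\pi_j=1-\pi_j^{-1}$, which is precisely the $j$-th coordinate of $\mathbf{1}^{\textsf{T}}-\pi_j^{-1}\mathbf{e}_j^{\textsf{T}}$. Hence $Hc=-\pi_j\big(\mathbf{1}^{\textsf{T}}-\pi_j^{-1}\mathbf{e}_j^{\textsf{T}}\big)=\mathbf{e}_j^{\textsf{T}}-\pi_j\mathbf{1}^{\textsf{T}}$, and with $\pi c=0$ and the uniqueness above we conclude $c=Q_{\cdot j}$, which is the asserted formula.

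I do not expect a real obstacle: once one decides to argue column-by-column and to pad $h$ with a zero in coordinate $j$, the whole proof runs on the two relations $\pi H=0$ and $H\mathbf{1}^{\textsf{T}}=0$ together with the one-dimensionality of the two kernels. The only delicate point — and the natural place a reader might stumble — is that the correction term $\pi_j^{-1}\mathbf{e}_j^{\textsf{T}}$ is invisible at the level of $H^{(j)}$ and comes entirely from the coordinate $(H\widetilde h)_j$, recovered from the global balance $\pi H\widetilde h=0$; this is also the place where the stationary measure, and downstream the matrix-tree expansion of $\pi$ and of $(H^{(j)})^{-1}\mathbf{1}^{\textsf{T}}$, will enter.
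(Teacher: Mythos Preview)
Your proof is correct and takes a genuinely different route from the paper. The paper computes $Q$ directly from the integral representation: it block-triangularises $H$ via an explicit change of basis $K_jS_j$, applies a block-matrix exponential formula to evaluate $e^{tH}$, integrates against $\Pi$, and finally invokes the Sherman--Morrison formula to pass from $(H^{(j)}-\mathbf{1}^{\textsf T}D_j)^{-1}$ to $(H^{(j)})^{-1}$. Your argument instead characterises each column $Q_{\cdot j}$ abstractly by the pair of conditions $\pi Q_{\cdot j}=0$ and $HQ_{\cdot j}=\mathbf{e}_j^{\textsf T}-\pi_j\mathbf{1}^{\textsf T}$, proves uniqueness from the one-dimensionality of $\ker H$, and then verifies that the proposed expression satisfies both conditions via the padded vector $\widetilde h$. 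The paper's approach is constructive (it derives the formula from the definition of $Q$) but heavier, requiring the auxiliary matrices $K_j,S_j$, the block exponential lemma, and Sherman--Morrison; your approach is verificatory and considerably shorter, using only $\pi H=0$, $H\mathbf{1}^{\textsf T}=0$, and elementary kernel arguments, though it presupposes the target formula rather than discovering it.
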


\begin{proof}
Let $K_{j}\in \R^{n\times n}$ be the matrix such that all the entries on the $j$-th column and the main diagonal are equal to 1 while the others are 0, and let $S_{j}\in \R^{n\times n}$ be the matrix obtained by swapping the first column and the $j$-th column of the identity matrix, i.e.
\[K_j = \begin{pmatrix}
1 & 0 & \cdots & 1 & \cdots & 0 \\
0 & 1 & \cdots & 1 & \cdots & 0 \\
\vdots & \vdots & \ddots & 1 & \ddots & \vdots \\
0 & 0 & \cdots & 1 & \cdots & 0 \\
0 & 0 & \cdots & 1 & \cdots & 1
\end{pmatrix} \quad \text{and} \quad
S_j = \begin{pmatrix}
0 & 0 & \cdots & 1 & \cdots & 0 \\
0 & 1 & \cdots & 0 & \cdots & 0 \\
\vdots & \vdots & \ddots & \vdots & \ddots & \vdots \\
1 & 0 & \cdots & 0 & \cdots & 0 \\
\vdots & \vdots & \ddots & \vdots & \ddots & \vdots \\
0 & 0 & \cdots & 0 & \cdots & 1 
\end{pmatrix}.
\]
We have $S_j^{-1}=S_j$ and $$K_j^{-1} = \begin{pmatrix}
1 & 0 & \cdots & -1 & \cdots & 0 \\
0 & 1 & \cdots & -1 & \cdots & 0 \\
\vdots & \vdots & \ddots & -1 & \ddots & \vdots \\
0 & 0 & \cdots & -1 & \cdots & 0 \\
0 & 0 & \cdots & -1 & \cdots & 1
\end{pmatrix}.
$$
Notice that for a matrix $A\in \R^{n\times n}$, $K_{j}^{-1}AK_j$ is the  matrix obtained from $A$ by adding all other columns to the $j$-th column and then subtracting the $j$-th row from other rows. Moreover,   $S_{j}^{-1}AS_j$ is the matrix obtained from $A$ by swapping the $j$-th column with the first column and then swapping the $j$-th row with the first row. We thus have
\begin{align}\label{H.trans}
    S_{j}^{-1}K_j^{-1}HK_jS_{j}=\begin{pmatrix}0 & D_j\\
\mathbf{0} & H^{(j)}- \mathbf{1}^{\textsf{T}}\cdot D_j\end{pmatrix},
\end{align}
where $\mathbf 0\in \R^{(n-1)\times 1}$ is the zero column vector and $D_j=(H_{jk})_{k\in V\setminus\{j\}}$. Since $H$ is the infinitesimal generator  of an irreducible Markov chain on $\{1,2,\dots, n\}$, we must have $\text{rank}(H)=n-1$. It thus follows from \eqref{H.trans} that $H^{(j)}- \mathbf{1}^{\textsf{T}}\cdot D_j$ is invertible.

Applying Lemma~\ref{MatExp} (see Appendix) to the block matrix given by \eqref{H.trans}, we obtain
$$e^{tH}=K_jS_j\begin{pmatrix}1 & D_{j} (e^{t(H^{(j)}- \mathbf{1}^{\textsf{T}}\cdot D_j)}-I_{n-1}) (H^{(j)}- \mathbf{1}^{\textsf{T}}\cdot D_j)^{-1} \\\mathbf{0} & e^{t(H^{(j)}- \mathbf{1}^{\textsf{T}}\cdot D_j)}\end{pmatrix}S_j^{-1}K_j^{-1}.$$
On the other hand,
$$\Pi=K_jS_j\begin{pmatrix} 1 & {\pi}^{(j)} \\ 
\mathbf{0} & O\end{pmatrix}S_j^{-1}K_j^{-1},$$
where ${\pi}^{(j)} =(\pi_{i})_{i\in V\setminus\{j\}}$ and $O$ is the $(n-1)\times (n-1)$ matrix with all zero entries. Notice that
${\pi}^{(j)}H^{(j)}=-\pi_j D_j = ({\pi}^{(j)}\mathbf{1}^{\textsf{T}}
-1)D_j$ and thus
\begin{equation}\label{eq:pij}
{\pi}^{(j)}=-D_j\cdot(H^{(j)}- \mathbf{1}^{\textsf{T}}\cdot D_j)^{-1}.
\end{equation}
Hence
\begin{align*}
Q&=\int_{0}^{\infty}(\Pi-e^{tH})\rmd t \\
& =K_jS_j \begin{pmatrix}0 & -{\pi}^{(j)}\cdot (H^{(j)}- \mathbf{1}^{\textsf{T}}\cdot D_j)^{-1} \\\mathbf{0} & (H^{(j)}- \mathbf{1}^{\textsf{T}}\cdot D_j)^{-1}\end{pmatrix}S_j^{-1}K_j^{-1}\\
& = K_jS_j\begin{pmatrix}{\pi}^{(j)}\cdot (H^{(j)}- \mathbf{1}^{\textsf{T}}\cdot D_j)^{-1}\cdot\mathbf{1}^{\textsf T} & -{\pi}^{(j)}\cdot (H^{(j)}- \mathbf{1}^{\textsf{T}}\cdot D_j)^{-1} \\ -(H^{(j)}- \mathbf{1}^{\textsf{T}}\cdot D_j)^{-1} \cdot\mathbf{1}^{\textsf T} & (H^{(j)}- \mathbf{1}^{\textsf{T}}\cdot D_j)^{-1}\end{pmatrix}S_j^{-1}\\
& = S_jK_1\begin{pmatrix}{\pi}^{(j)}\cdot (H^{(j)}- \mathbf{1}^{\textsf{T}}\cdot D_j)^{-1}\cdot\mathbf{1}^{\textsf T} & -{\pi}^{(j)}\cdot (H^{(j)}- \mathbf{1}^{\textsf{T}}\cdot D_j)^{-1} \\ -(H^{(j)}- \mathbf{1}^{\textsf{T}}\cdot D_j)^{-1} \cdot\mathbf{1}^{\textsf T} & (H^{(j)}- \mathbf{1}^{\textsf{T}}\cdot D_j)^{-1}\end{pmatrix}S_j^{-1}\\
& = S_j\begin{pmatrix}{\pi}^{(j)}\cdot (H^{(j)}- \mathbf{1}^{\textsf{T}}\cdot D_j)^{-1}\cdot\mathbf{1}^{\textsf T} & -{\pi}^{(j)}\cdot (H^{(j)}- \mathbf{1}^{\textsf{T}}\cdot D_j)^{-1} \\ -(I_{n-1}-\Pi^{(j)})(H^{(j)}- \mathbf{1}^{\textsf{T}}\cdot D_j)^{-1} \cdot\mathbf{1}^{\textsf T} & (I_{n-1}-\Pi^{(j)})(H^{(j)}- \mathbf{1}^{\textsf{T}}\cdot D_j)^{-1}\end{pmatrix}S_j^{-1},
\end{align*}
where in the fourth identity we used the fact that $K_jS_j=S_jK_1$.
It follows that
$$Q_{ij}=({\pi}^{(j)}-\mathbf{e}_i^{(j)})(H^{(j)}- \mathbf{1}^{\textsf{T}}\cdot D_j)^{-1} \cdot\mathbf{1}^{\textsf T}.$$
Using Sherman-Morrison formula (see Lemma \ref{SM.formula} in Appendix), we have that
$$(H^{(j)}- \mathbf{1}^{\textsf{T}}\cdot D_j)^{-1}=\left(H^{(j)}\right)^{-1}+\frac{\left(H^{(j)}\right)^{-1}\mathbf{1}^{\textsf{T}} D_j \left(H^{(j)}\right)^{-1}}{1- D_j   \left(H^{(j)}\right)^{-1}\mathbf{1}^{\textsf{T}}  }.$$
We thus get
$$(H^{(j)}- \mathbf{1}^{\textsf{T}}\cdot D_j)^{-1} \mathbf{1}^{\textsf T}= \frac{\left(H^{(j)}\right)^{-1}\mathbf{1}^{\textsf T}}{1- D_j   \left(H^{(j)}\right)^{-1}\mathbf{1}^{\textsf{T}}  }.$$
Furthermore, using \eqref{eq:pij}
$${\pi}^{(j)}=-D_j\cdot(H^{(j)}- \mathbf{1}^{\textsf{T}}\cdot D_j)^{-1}=\frac{-D_j   \left(H^{(j)}\right)^{-1}}{1- D_j   \left(H^{(j)}\right)^{-1}\mathbf{1}^{\textsf{T}}}.$$
Hence $1-D_j   \left(H^{(j)}\right)^{-1}\mathbf{1}^{\textsf{T}}=(1-{\pi}^{(j)}\cdot\mathbf{1}^{\textsf{T}})^{-1}=\pi_j^{-1}$. It follows that
\begin{align*}
Q_{ij} & =\frac{({\pi}^{(j)}-\mathbf{e}_i^{(j)})\left(H^{(j)}\right)^{-1}\mathbf{1}^{\textsf T} }{1- D_j   \left(H^{(j)}\right)^{-1}\mathbf{1}^{\textsf{T}}  } = \pi_j ({\pi}^{(j)}-\mathbf{e}_i^{(j)})\left(H^{(j)}\right)^{-1}\mathbf{1}^{\textsf T}.
\end{align*}
\end{proof}

\section{Strongly VRJP on a finite graph}\label{sec:f.graph}

Throughout this section, we assume that the graph $\mathcal G=(V,E)$  is finite.

\subsection{Martingale decomposition for VRJP}
We always assume throughout this subsection that Assumption \ref{assump} is fulfilled and the function $w$ is differentiable. Let $\R_{*}^{V}$ be the set of $|V|$-dimensional vectors $z=(z_v)_{v\in V}$  such that $z_v\ge \ell_*$ for each $v\in V$, where we recall that $\ell_*=\min_{v\in V} \ell_v$.

Set $R(t)=(R_j(t))_{j\in V}$, where for each $j\in V$ and $t\in\R_+$,
\begin{align}
\label{eq.Ht} R_{j}(t)& :=\int_{\ell_j}^{L(j,t)} \frac{\rmd u}{w(u)}-\int_{0}^t \frac{\rmd u}{\sum_{j\in V} w(L(j,u))}.
\end{align}
Notice that if there exists $i,j\in V$ such that $L(i,\infty)=L(j,\infty)=\infty$ then we must have $$R_i({\infty})-R_j({\infty})=\int_{\ell_i}^{\infty}\frac{\rmd u}{w(u)}-\int_{\ell_j}^{\infty}\frac{\rmd u}{w(u)}.$$
In this subsection we establish a martingale decomposition for $(R_i(t)-R_j(t))_{t\in \R_+}$ for each pair of vertices $i,j\in V$.

For each $z=(z_j)_{j\in V}\in \mathbb R_{*}^{V}$, 
let $H(z)=(H_{i,j}(z))_{i,j\in V}$ be an infinitesimal generator matrix defined by
\begin{align}H_{i,j}(z)=\left\lbrace\begin{matrix}{\1}_{j \sim i} w(z_j) & \text{if } j\neq i; \\ \displaystyle - \sum_{k\sim i} w(z_k) & \text{if } j=i.\end{matrix}\right.
\end{align}

Set $L(t)=(L(j,t))_{j\in V }$. Note that $(X_t, L(t))_{t\ge 0}$ is a continuous-time Markov process on the state space $V\times \mathbb R_{*}^{V}$. Let $(\mathcal{T}_{t})_{t\in\R_+}$ and $\mathcal{L}$  be respectively the semigroup associated with this Markov process and its infinitesimal generator. Recall that for each  bounded continuously differentiable function $f:V\times \mathbb{R}_{*}^{V}\to \R$, $k\in V$ and $z\in \R_{*}^{V}$,
\begin{align*}(\mathcal{T}_{t}f)(k,z)&=\E[f(X_t,L(t))|X_0=k,L(0)=z]\quad\text{and}\\
\mathcal{L} f &=\lim_{t\downarrow 0}\frac{\mathcal{T}_tf- f}{t}.\end{align*}

For each $k\in V$ and $z\in \R_{*}^{V}$, we have
\begin{equation}\label{eq:operator}
\begin{aligned}(\mathcal{L} f)(k, z)&=\left(\frac{\partial }{\partial z_k}f\right)(k,z)+\sum_{h\in V} H_{k,h}(z)f(h,z)\\
&=\left(\frac{\partial }{\partial z_k}f\right)(k,z)+\sum_{h\sim k} w(z_h)\big(f(h,z)-f(k,z)\big).\end{aligned}
\end{equation}

For each $z\in\mathbb{R}^{V}_+$, let $\pi(z):=\left(\pi_j(z)\right)_{j\in V}$ with $$\pi_j(z):=\frac{w(z_j)}{\sum_{j\in V} w(z_j)}.$$ Notice that $\pi(z)H(z)=0$ and $\sum_{j\in V} \pi_j(z) =1$. We also define $\Pi(z)=(\Pi_{ij}(z))_{i, j\in  V}$ with $\Pi_{ij}(z)=\pi_j(z)$. Set
\begin{align}\label{sol.Q}
    Q(z):=
    \int_0^\infty \left(\Pi(z)-I\right)e^{s H(z)} \rmd s=
    \int_0^\infty \left(\Pi(z)-e^{s H(z)}\right) \rmd s,\end{align}
where $I$ is the identity matrix in $\R^{V\times V}$.   Recall from Section \ref{sec:poiseq} that  $Q(z)=(Q_{i,j}(z))_{i, j\in  V}$ is a solution to the following Poisson's matrix equation
\begin{equation}\label{eq.Poisson}I-\Pi(z)=H(z)Q(z)=Q(z)H(z).
\end{equation}

For each ordered pair of vertices  $i,j\in V$, not necessarily neighbours, we define function $\phi_{i,j}:  V\times \mathbb{R}_{*}^{V}\to \R$ such that 
\begin{align}\label{def.phi}\phi_{i,j} (k,z) := \frac{Q_{k,i}(z)}{w(z_i)} -\frac{Q_{k,j}(z)}{w(z_j)} .\end{align} 
Recall the notations:  $$\mathbf{e}_{h}^{(j)}=(\1_{k=h})_{k\in V\setminus\{j\}},\quad \pi^{(j)}(z)=(\pi_k(z))_{k\in V\setminus\{j\}}\quad\text{and}\quad H^{(j)}(z)=(H_{k,h}(z))_{k,h\in V\setminus\{j\} }.$$
For $t\in\R_+$, define
$$ W(t):= \sum_{i \in V} w(L(i, t)).
$$

For   for $i,j,k,h\in V$, $z\in \R_{*}^{V}$and $t\in\R_{+}$ define
\begin{align} M_{i,j}(t)&:=
\phi_{i,j}(X_t,L(t))- \phi_{i,j}(X_0, \ell)-\int_0^t \mathcal{L} \phi_{i,j}(X_u, L(u))\rmd u \quad\text{and}\\
A_{ij}(t)&:=\int_0^t
\left.\frac{\partial \phi_{ij}(k,z)}{\partial z_k}\right|_{k=X_u, z=L(u)}\rmd u\\
\label{kappa.def}
\kappa_{i,j,h}(k,z)&:=(\mathbf{e}_{k}^{(i)}-\mathbf{e}_h^{(i)})\cdot\left(H^{(i)}(z)\right)^{-1}\cdot\mathbf{1}^{\textsf T}-(\mathbf{e}_{k}^{(j)}-\mathbf{e}_h^{(j)})\cdot\left(H^{(j)}(z)\right)^{-1}\cdot\mathbf{1}^{\textsf T} .
\end{align}

\begin{proposition}\label{mart.M} 
\noindent {\bf a.} For  each $i, j\in V$,  $(M_{i,j}(t))_{t\in\R_+}$ is a martingale and its angle bracket quadratic variation is given by
\begin{align*}
   \langle M_{i,j}\rangle_t  = \int_{0}^t\Lambda_{i,j}(u)\rmd u,
\end{align*}
where  
$$\Lambda_{i,j}(t):=\sum_{h\sim X_t}\frac{ w(L(h,t))}{W(t)^2} \kappa_{i,j,h}(X_t,L(t))^2.$$
\noindent {\bf b.} For  each $i, j\in V$ and $t\in \R_+$, we have that $A_{ij}(t)$ is equal to
\begin{align*}
    \int_0^t\sum_{h\in V}\frac{ w(L(h,u))}{W(u)^2}\Big( \frac{2w'(L(X_u,u)) }{W(u)}\kappa_{i,j,h}(X_u,L(u))-\frac{\partial \kappa_{i,j,h}(k,z)}{\partial z_k}\Big|_{k=X_u, z=L(u)} \Big) {\rm d} u.
\end{align*}
\end{proposition}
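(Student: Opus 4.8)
The plan is to apply Dynkin's formula to the function $\phi_{i,j}$ and then identify the two pieces in the resulting decomposition. First I would observe that $(X_t, L(t))_{t\in\R_+}$ is a continuous-time Markov process with generator $\mathcal{L}$ given by \eqref{eq:operator}, and that $\phi_{i,j}$ is (locally) bounded and continuously differentiable in the $z$-variables on the relevant region $\{z : z_k \ge \ell_k\}$, since $Q(z)$ depends smoothly on $z$ through $H(z)$ and $\Pi(z)$ (using that $w$ is differentiable and $H(z)$ has constant rank $n-1$). Hence $M_{i,j}(t) = \phi_{i,j}(X_t,L(t)) - \phi_{i,j}(X_0,\ell) - \int_0^t \mathcal{L}\phi_{i,j}(X_u,L(u))\,\rmd u$ is a (local, and by the boundedness on the range a genuine) martingale. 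This is part \textbf{a}'s martingale claim. For part \textbf{b}, split $\mathcal{L}\phi_{i,j}$ using \eqref{eq:operator} into the differential part $\frac{\partial}{\partial z_k}\phi_{i,j}(k,z)$ — whose time integral is exactly $A_{ij}(t)$ by definition — and the jump part $\sum_{h\sim k} w(z_h)(\phi_{i,j}(h,z)-\phi_{i,j}(k,z))$. I would then show the jump part vanishes along the path: by the Poisson equation \eqref{eq.Poisson}, $\sum_h H_{k,h}(z) Q_{h,i}(z) = \delta_{ki} - \pi_i(z)$, so
\begin{align*}
\sum_{h\sim k} w(z_h)\big(\phi_{i,j}(h,z)-\phi_{i,j}(k,z)\big)
&= \sum_h H_{k,h}(z)\Big(\tfrac{Q_{h,i}(z)}{w(z_i)} - \tfrac{Q_{h,j}(z)}{w(z_j)}\Big)\\
&= \tfrac{\delta_{ki}-\pi_i(z)}{w(z_i)} - \tfrac{\delta_{kj}-\pi_j(z)}{w(z_j)},
\end{align*}
and since $\pi_i(z)/w(z_i) = 1/W$ (with $W = \sum_m w(z_m)$) the $\pi$-terms cancel; the surviving Kronecker deltas contribute $\delta_{ki}/w(z_i) - \delta_{kj}/w(z_j)$, which integrates to exactly $R_i(t) - R_j(t)$ up to the drift already absorbed — more precisely this is what makes $R_i(t)-R_j(t) - A_{ij}(t)$ a martingale; I would double-check that in the formula for $A_{ij}$ the statement intends the decomposition $R_i(t)-R_j(t) = \phi_{i,j}(X_0,\ell) - \phi_{i,j}(X_t,L(t)) + M_{i,j}(t) + A_{ij}(t)$, so the jump part's Kronecker-delta contribution matches the $\rmd L$-terms in $R$.

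Next, for the quadratic variation in part \textbf{a}, I would use the standard formula for the angle bracket of a martingale obtained via Dynkin's formula for a pure-jump-plus-drift Markov process: $\langle M_{i,j}\rangle_t = \int_0^t \big(\mathcal{L}(\phi_{i,j}^2) - 2\phi_{i,j}\mathcal{L}\phi_{i,j}\big)(X_u,L(u))\,\rmd u$. The differential (drift) part of $\mathcal{L}$ contributes nothing to $\mathcal{L}(\phi^2) - 2\phi\mathcal{L}\phi$ since it is a first-order operator satisfying the Leibniz rule; only the jump part survives, giving $\sum_{h\sim k} w(z_h)\big(\phi_{i,j}(h,z)-\phi_{i,j}(k,z)\big)^2$. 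It remains to identify the jump increment $\phi_{i,j}(h,z)-\phi_{i,j}(k,z)$ with $\frac{1}{W}\kappa_{i,j,h}(k,z)$. For this I would invoke Proposition \ref{Pois.thm}: $Q_{k,i}(z) = \pi_i(z)(\pi^{(i)}(z) - \mathbf{e}_k^{(i)})(H^{(i)}(z))^{-1}\mathbf{1}^{\textsf T}$, so $Q_{k,i}(z)/w(z_i) = \frac{1}{W}(\pi^{(i)}(z) - \mathbf{e}_k^{(i)})(H^{(i)}(z))^{-1}\mathbf{1}^{\textsf T}$, hence
\begin{align*}
\phi_{i,j}(h,z) - \phi_{i,j}(k,z) &= \tfrac{1}{W}\Big[(\mathbf{e}_k^{(i)} - \mathbf{e}_h^{(i)})(H^{(i)}(z))^{-1}\mathbf{1}^{\textsf T} - (\mathbf{e}_k^{(j)} - \mathbf{e}_h^{(j)})(H^{(j)}(z))^{-1}\mathbf{1}^{\textsf T}\Big] = \tfrac{1}{W}\kappa_{i,j,h}(k,z),
\end{align*}
where the $\pi^{(i)}(z)$-terms cancelled in the difference. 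Squaring and multiplying by $w(z_h)$ gives $\Lambda_{i,j}(t)$ as stated.

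Finally, for the explicit form of $A_{ij}(t)$ in part \textbf{b}, I would compute $\frac{\partial}{\partial z_k}\phi_{i,j}(k,z)$ directly from the identity $\phi_{i,j}(k,z) - \phi_{i,j}(i,z) = \frac{1}{W}\kappa_{i,j,k}(\cdot)$ — or rather, differentiate $\phi_{i,j}(k,z) = \frac{Q_{k,i}(z)}{w(z_i)} - \frac{Q_{k,j}(z)}{w(z_j)}$ using the representation in terms of $\kappa$. Writing $\phi_{i,j}(k,z) = \frac{1}{W(z)} \rho_{i,j,k}(z)$ where $\rho$ collects the $\kappa$-type expression relative to a fixed reference (so that the $h$-sum over $w(z_h)$ appears when one re-expresses things symmetrically), the product rule gives a term $-\frac{W'_k}{W^2}\rho = -\frac{w'(z_k)}{W^2}\rho$ and a term $\frac{1}{W}\frac{\partial \rho}{\partial z_k}$; rearranging and using $\sum_h w(z_h) = W$ to insert the $h$-sum, then matching against the stated formula, should produce exactly
$\int_0^t \sum_h \frac{w(L(h,u))}{W(u)^2}\big(\frac{2w'(L(X_u,u))}{W(u)}\kappa_{i,j,h} - \partial_{z_k}\kappa_{i,j,h}\big)\rmd u$. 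The main obstacle I anticipate is this last bookkeeping step: carefully tracking how the single differential term $\partial_{z_k}\phi_{i,j}$ reorganizes into a sum over all $h\in V$ weighted by $w(L(h,u))/W^2$, and verifying the factor of $2$ on the $w'$-term. This requires care with the $z$-dependence of $W$, of $\pi^{(i)}(z)$ (which cancels only at the level of $\phi$, not termwise), and of $(H^{(i)}(z))^{-1}$, but is ultimately a routine, if lengthy, differentiation; the conceptual content is entirely in the Poisson-equation cancellation already used in parts \textbf{a}.
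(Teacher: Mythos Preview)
Your approach is essentially the paper's: Dynkin's formula for the martingale, the carr\'e-du-champ $\mathcal{L}\phi_{i,j}^2 - 2\phi_{i,j}\mathcal{L}\phi_{i,j}$ for the bracket, and Proposition~\ref{Pois.thm} to identify $\phi_{i,j}(h,z)-\phi_{i,j}(k,z) = W^{-1}\kappa_{i,j,h}(k,z)$. Two remarks. First, your computation of the jump part of $\mathcal{L}\phi_{i,j}$ via the Poisson equation (yielding $\delta_{ki}/w(z_i)-\delta_{kj}/w(z_j)$) is correct but is the content of Proposition~\ref{decompostion}, not of part~\textbf{b} here; in the present proposition $A_{ij}(t)$ is \emph{defined} as $\int_0^t \partial_{z_k}\phi_{i,j}|_{k=X_u,\,z=L(u)}\,\rmd u$, and part~\textbf{b} is solely the explicit evaluation of that derivative. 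Second, the ``bookkeeping'' you flag for part~\textbf{b} is dispatched cleanly by first rewriting, via Proposition~\ref{Pois.thm} together with $\pi^{(i)}(z)=W^{-1}\sum_{h} w(z_h)\mathbf{e}_h^{(i)}$,
\[
\phi_{i,j}(k,z)\;=\;-\frac{1}{W(z)^2}\sum_{h\in V\setminus\{k\}} w(z_h)\,\kappa_{i,j,h}(k,z),
\]
so that both the sum over $h$ and the prefactor $W^{-2}$ are already present before differentiating. Taking $\partial_{z_k}$ then produces $2w'(z_k)/W^3$ from the prefactor and $-W^{-2}\sum_h w(z_h)\,\partial_{z_k}\kappa_{i,j,h}(k,z)$ from the sum (the weights $w(z_h)$ with $h\neq k$ are constant in $z_k$), which is exactly the stated integrand; the factor of $2$ you were worried about is simply the derivative of $W^{-2}$.
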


\begin{proof}
{\bf a.} It is well-known from the theory of Markov processes (see e.g. \cite{BGL2014}) that $(M_{i,j}(t))_{t\in\R_+}$ is a martingale and its angle bracket quadratic variation is given by
\begin{align*}\langle M_{i,j}\rangle_t
= \int_0^t \left( \mathcal{L} \phi_{i,j}^2-2\phi_{i,j}\mathcal{L} \phi_{i,j}\right)(X_u,L(u))\rmd u.\end{align*}
Using \eqref{eq:operator}, we obtain
\begin{align*}(\mathcal{L}\phi_{i,j}^2)(k,z)&=\left(H(z)\phi_{i,j}^2\right)(k,z)+2\phi_{i,j}(k,z)\frac{\partial \phi_{i,j}(k,z)}{\partial z_k}\quad\text{and}\\
(\phi_{i,j}\mathcal{L}\phi_{i,j})(k,z)&=\phi_{i,j}(k,z)\big( H(z)\phi_{i,j}\big)(k,z)+\phi_{i,j}(k,z)\frac{\partial \phi_{i,j}(k,z)}{\partial z_k}.
\end{align*}
Hence 
\begin{align*}&\left( \mathcal{L} \phi_{i,j}^2-2\phi_{i,j}\mathcal{L} \phi_{i,j}\right) (k,z) =
    \big(H(z)\phi_{i,j}^2\big)(k,z)-2\phi_{i,j}(k,z)\big(H(z)\phi_{i,j}\big)(k,z)\\
    & = \sum_{h\sim k} w(z_h) \left( \Big(\frac{Q_{h,i}(z)}{w(z_i)} -\frac{Q_{h,j}(z)}{w(z_j)}\Big)^2-\Big(\frac{Q_{k,i}(z)}{w(z_i)} -\frac{Q_{k,j}(z)}{w(z_j)}\Big)^2\right) \\
    &- 2 \Big(\frac{Q_{k,i}(z)}{w(z_i)} -\frac{Q_{k,j}(z)}{w(z_j)}\Big) \sum_{h\sim k}w(z_h) \left( \frac{Q_{h,i}(z)}{w(z_i)} -\frac{Q_{h,j}(z)}{w(z_j)}-\frac{Q_{k,i}(z)}{w(z_i)} +\frac{Q_{k,j}(z)}{w(z_j)}\right)\\
    & = \sum_{h\sim k} w(z_{h}) \left(\frac{Q_{h,i}(z)}{w(z_i)} -\frac{Q_{h,j}(z)}{w(z_j)}-\frac{Q_{k,i}(z)}{w(z_i)} +\frac{Q_{k,j}(z)}{w(z_j)} \right)^2.
\end{align*}
On the other hand, in virtue of Proposition \ref{Pois.thm}, we have
\begin{align}\label{Qz}
    Q_{ij}(z)=
\displaystyle \pi_j(z) ({\pi}^{(j)}(z)-\mathbf{e}_i^{(j)})\left(H^{(j)}(z)\right)^{-1}\mathbf{1}^{\textsf T}.   
\end{align}
Combining \eqref{Qz} with the fact that  $\pi_k(z)={w(z_k)}/{\sum_{v\in V} w(z_v)}$ for each $k\in V$, we obtain
$$\frac{Q_{h,i}(z)-Q_{k,i}(z)}{w(z_i)}=\frac{1}{\sum_{v\in V} w(z_v)} (\mathbf{e}_{k}^{(i)}-\mathbf{e}_h^{(i)})\cdot\left(H^{(i)}(z)\right)^{-1}\cdot\mathbf{1}^{\textsf T}.$$
Therefore $\langle M_{i,j}\rangle_t$ is equal to
\begin{align*}
&\int_0^t \sum_{h\sim X_u}\frac{w(L(h,u))}{W(u)^2}\Big( (\mathbf{e}_{X_u}^{(i)}-\mathbf{e}_h^{(i)})\big(H^{(i)}(L(u))\big)^{-1}\mathbf{1}^{\textsf T}-(\mathbf{e}_{X_u}^{(j)}-\mathbf{e}_h^{(j)})\big(H^{(j)}(L(u))\big)^{-1}\mathbf{1}^{\textsf T}\Big)^2\rmd u. \end{align*}
{\bf b.} Using \eqref{Qz} and the fact that  $\pi_k(z)={w(z_k)}/{\sum_{v\in V} w(z_v)}$ for each $k\in V$, we have
\begin{align}
  &\nonumber \phi_{ij}(k,z)= \frac{Q_{k,i}(z)}{w(z_i)}-\frac{Q_{k,j}(z)}{w(z_j)}\\
 \nonumber & = \frac{ \big(  \pi^{(i)}(z)-\mathbf{e}_k^{(i)} \big)\big(H^{(i)}(z)\big)^{-1} \mathbf{1}^{\mathsf{T}}-\big(\pi^{(j)}(z)-\mathbf{e}_k^{(j)} \big)\big(H^{(j)}(z)\big)^{-1} \mathbf{1}^{\mathsf{T}}}{\sum_{v\in V} w(z_v)}\\
 \nonumber  & =\frac{\sum_{h\in V}w(z_h)\left( \big( \mathbf{e}_h^{(i)}-\mathbf{e}_k^{(i)} \big)\big(H^{(i)}(z)\big)^{-1} \mathbf{1}^{\mathsf{T}}-\big(\mathbf{e}_h^{(j)}-\mathbf{e}_k^{(j)} \big)\big(H^{(j)}(z)\big)^{-1} \mathbf{1}^{\mathsf{T}}\right)}{\big(\sum_{v\in V} w(z_v)\big)^2}\\
 \label{phi.def} & = \frac{-\sum_{h\in V\setminus\{k\}}w(z_h) \kappa_{i,j,h}(k,z)}{\big(\sum_{v\in V} w(z_v)\big)^2}.
\end{align}
Hence
\begin{align}\nonumber
    \frac{\partial \phi_{ij}(k,z)}{\partial z_k}&=
    \frac{2w'(z_k)}{\big(\sum_{v\in V} w(z_v)\big)^3}\sum_{h\in V\setminus\{k\}}w(z_h) \kappa_{i,j,h}(k,z)\\ \label{D.phi}
    &-\frac{1}{\big(\sum_{v\in V} w(z_v)\big)^2}\sum_{h\in V\setminus\{k\}}w(z_h)  \frac{\partial }{\partial z_k} \kappa_{i,j,h}(k,z).
\end{align}

\end{proof}

\begin{proposition}\label{decompostion}
For each pair of distinct vertices $i, j\in V$ and $t\in\R_+$, we have
\begin{align}R_i(t)-R_j(t)&=\phi_{ij}(X_t,L(t))-\phi_{ij}(X_0,\ell)-M_{ij}(t)-A_{ij}(t).\end{align}
\end{proposition}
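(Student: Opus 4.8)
The plan is to identify $R_i(t) - R_j(t)$ with $-M_{ij}(t) - A_{ij}(t)$ plus boundary terms by exploiting the defining martingale property of $M_{ij}$ together with the explicit structure of the Poisson solution $Q(z)$. First I would recall from the definition that
$$M_{ij}(t) = \phi_{ij}(X_t, L(t)) - \phi_{ij}(X_0,\ell) - \int_0^t (\mathcal{L}\phi_{ij})(X_u, L(u))\,\mathrm{d}u,$$
and that $A_{ij}(t) = \int_0^t \big(\tfrac{\partial \phi_{ij}}{\partial z_k}\big)\big|_{k=X_u, z=L(u)}\,\mathrm{d}u$. Subtracting, the claimed identity is equivalent to showing
$$R_i(t) - R_j(t) = -\int_0^t (\mathcal{L}\phi_{ij})(X_u, L(u))\,\mathrm{d}u + \int_0^t \Big(\tfrac{\partial \phi_{ij}}{\partial z_k}\Big)\Big|_{k=X_u, z=L(u)}\,\mathrm{d}u.$$
By the formula \eqref{eq:operator} for $\mathcal{L}$, the difference $-\mathcal{L}\phi_{ij} + \tfrac{\partial \phi_{ij}}{\partial z_k}$ evaluated at $(k,z)$ equals $-\big(H(z)\phi_{ij}\big)(k,z) = -\sum_h H_{k,h}(z)\phi_{ij}(h,z)$, since the $\partial/\partial z_k$ terms cancel. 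So the task reduces to showing that
$$R_i(t) - R_j(t) = -\int_0^t \big(H(L(u))\phi_{ij}\big)(X_u, L(u))\,\mathrm{d}u.$$

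Next I would evaluate $\big(H(z)\phi_{ij}\big)(k,z)$ using the Poisson equation \eqref{eq.Poisson}. Since $\phi_{ij}(\cdot, z) = \tfrac{1}{w(z_i)} Q_{\cdot,i}(z) - \tfrac{1}{w(z_j)} Q_{\cdot, j}(z)$ viewed as a column vector indexed by the first argument, and $H(z)Q(z) = I - \Pi(z)$, we get
$$\big(H(z)\phi_{ij}\big)(k,z) = \frac{1}{w(z_i)}\big(I - \Pi(z)\big)_{k,i} - \frac{1}{w(z_j)}\big(I-\Pi(z)\big)_{k,j} = \frac{\1_{k=i} - \pi_i(z)}{w(z_i)} - \frac{\1_{k=j} - \pi_j(z)}{w(z_j)}.$$
Recalling $\pi_i(z) = w(z_i)/\sum_v w(z_v)$, the terms $-\pi_i(z)/w(z_i)$ and $-\pi_j(z)/w(z_j)$ both equal $-1/\sum_v w(z_v)$ and hence cancel, leaving
$$\big(H(z)\phi_{ij}\big)(k,z) = \frac{\1_{k=i}}{w(z_i)} - \frac{\1_{k=j}}{w(z_j)}.$$
Therefore $-\int_0^t \big(H(L(u))\phi_{ij}\big)(X_u,L(u))\,\mathrm{d}u = -\int_0^t \tfrac{\1_{X_u=i}}{w(L(i,u))}\,\mathrm{d}u + \int_0^t \tfrac{\1_{X_u=j}}{w(L(j,u))}\,\mathrm{d}u$. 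Since $L(i,u) = \ell_i + \int_0^u \1_{X_s=i}\,\mathrm{d}s$, the substitution $v = L(i,u)$ (valid because $u \mapsto L(i,u)$ is non-decreasing and increases only on $\{X_u = i\}$) gives $\int_0^t \tfrac{\1_{X_u=i}}{w(L(i,u))}\,\mathrm{d}u = \int_{\ell_i}^{L(i,t)} \tfrac{\mathrm{d}v}{w(v)}$, and similarly for $j$. Comparing with the definition \eqref{eq.Ht} of $R_i(t)$, the two $W$-integral terms $\int_0^t \mathrm{d}u/\sum_v w(L(v,u))$ also cancel in $R_i(t) - R_j(t)$, so the right-hand side is exactly $R_i(t) - R_j(t)$.

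There is essentially no hard obstacle here; the proof is a bookkeeping computation. The one point requiring a small amount of care is the change-of-variables step identifying $\int_0^t \1_{X_u=i}/w(L(i,u))\,\mathrm{d}u$ with $\int_{\ell_i}^{L(i,t)} \mathrm{d}v/w(v)$, which should be justified by noting that $u \mapsto L(i,u)$ is absolutely continuous with derivative $\1_{X_u=i}$, so the integral is unchanged under the non-strictly-monotone substitution (on the flat pieces the integrand $\1_{X_u=i}$ vanishes). The other mild technicality is the implicit appeal to Dynkin's formula / the martingale property asserted in Proposition \ref{mart.M}, which is taken as given; I would simply cite it rather than re-derive the fact that $M_{ij}$ is a genuine martingale. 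I would present the proof as: (1) write out $R_i(t) - R_j(t) + M_{ij}(t) + A_{ij}(t)$; (2) collapse $-\mathcal{L}\phi_{ij} + \partial_{z_k}\phi_{ij}$ to $-H(z)\phi_{ij}$; (3) apply the Poisson equation and cancel the $\Pi$-terms; (4) perform the change of variables and cancel the $W$-terms; conclude the sum equals $\phi_{ij}(X_t, L(t)) - \phi_{ij}(X_0,\ell)$, which is the claimed rearrangement.
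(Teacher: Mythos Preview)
Your approach is essentially identical to the paper's: both reduce the identity to computing $(H(z)\phi_{ij})(k,z)$ via the Poisson equation $H(z)Q(z)=I-\Pi(z)$, observe the $\Pi$-terms cancel, and then perform the change of variables $v=L(i,u)$. However, you have a sign slip in your first reduction step: from $M_{ij}(t)=\phi_{ij}(X_t,L(t))-\phi_{ij}(X_0,\ell)-\int_0^t\mathcal{L}\phi_{ij}\,\mathrm{d}u$ one gets
\[
\phi_{ij}(X_t,L(t))-\phi_{ij}(X_0,\ell)-M_{ij}(t)-A_{ij}(t)=\int_0^t\mathcal{L}\phi_{ij}\,\mathrm{d}u-\int_0^t\partial_{z_k}\phi_{ij}\,\mathrm{d}u=\int_0^t(H\phi_{ij})(X_u,L(u))\,\mathrm{d}u,
\]
with a \emph{plus} sign, not the minus sign you wrote. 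This propagates to your final line, where your computation actually yields $R_j(t)-R_i(t)$ rather than $R_i(t)-R_j(t)$. Flipping that sign throughout makes the argument correct and indistinguishable from the paper's.
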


\begin{proof}
By changing variable $u=L(i,t)$, we have
\begin{align*}R_i(t)&=\int_{\ell_i}^{L(i,t)}\frac{{\rm d}u}{w(u)}-\int_0^t \frac{{\rm d}u}{\sum_{k\in V} w(L(k,u))} =\int_0^t\frac{\1_{\{X_u=i\}}{\rm d}u}{w(L(i,u))}-\int_0^t \frac{{\rm d}u}{\sum_{k\in V} w(L(k,u))}\\
&=\int_0^t\frac{1}{w(L(i,u))}\big( I-\Pi(L(u))\big)_{X_u,i}{\rm d}u\\
&=\int_0^t  \frac{1}{w(L(i,u))}\sum_{k\in V} H_{X_u,k}(L(u)){Q_{k,i}(L(u))}{\rm d}u,
\end{align*}
where the second step is a result of  $\pi_i(z)=w(z_i)/{\sum_{v\in V} w(z_v)}$ for each $i\in V$, and the last step follows from the fact that $Q(z)$ is a solution of the Poisson equation \eqref{eq.Poisson}. Hence, by the definitions of  $\mathcal{L}$ and  $\phi_{ij}$ given respectively by \eqref{eq:operator} and \eqref{def.phi}, we obtain
\begin{align*}R_i(t)-R_j(t)& =\int_0^t  \sum_{k\in V} H_{X_u,k}(L(u))\left( \frac{Q_{k,i}(L(u))}{w(L(i,u))}- \frac{Q_{k,j}(L(u))}{w(L(j,u))}\right){\rm d}u\\
&= \int_0^t (\mathcal{L}\phi_{i,j})(X_u,L(u)){\rm d}u - \int_0^t
\left.\frac{\partial \phi_{ij}(k,z)}{\partial z_k}\right|_{k=X_u, z=L(u)}\rmd u\\
&=\phi_{ij}(X_t,L(t))-\phi_{ij}(X_0,\ell)-M_{ij}(t)-A_{ij}(t).
\end{align*}
\end{proof}

\subsection{Non-convergence theorem for strongly VRJP}
We assume throughout this subsection that Assumption \ref{assump} is fulfilled, the function $w$ is differentiable and for each $C>0$ we have
\begin{align}\label{lower.regu}\liminf_{t\to\infty}\frac{w(Ct)}{w(t)}>0.\end{align}

For $i, j\in V$, $i\neq j$ and $t\in\R_+$, let $$Z_{i,j}(t):=M_{ij}(t)+A_{ij}(t).$$
Using Proposition \ref{decompostion}, we have
$$Z_{i,j}(t)=R_j(t)-R_i(t)+ \phi_{ij}(X_t,L(t))-\phi_{ij}(X_0,\ell).$$
By  applying Theorem \ref{nonconvergence} to the process $(Z_{i,j}(t))_{t\ge0}$, we show in this subsection that the event
$$\Gamma_{ij}:=\Big\{\liminf_{t\to\infty} \frac{L(i,t)\wedge L(j,t)}{t}>0\Big\}$$
occurs with probability 0. We then use this result to prove Theorem \ref{localization}.

We first verify the conditions of Theorem  \ref{nonconvergence} by providing certain bounds for $\langle M_{ij}\rangle_t$ and $A_{ij}(t)$. For this purpose, we provide bounds for  $\kappa_{i,j,h}(k,z)$ which is defined in \eqref{kappa.def}. In what follows, we use the matrix-tree theorem for weighted directed graphs to derive a combinatorial representation of $\kappa_{i,j,h}(k,z)$. 

Denote by $\mathfrak{T}$ the set of unrooted spanning trees of $\mathcal{G}=(V,E)$. For each $j\in V$, let $\mathfrak{T}_j$ be the set of all spanning trees rooted at $j$ of $\mathcal{G}=(V,E)$. For two disjoint sequences of vertices $(v_j)_{1\le j\le p}$ and $(u_j)_{1\le j\le q}$, we  denote by $\mathfrak{T}_{v_1v_2\cdots v_p,u_1u_2\cdots v_q}$ the set of all spanning forests of $\mathcal G$ consisting of two components, in which the first component is rooted at $v_1$ and contains $(v_j)_{1\le j\le p}$ and the other is rooted at $u_1$ and contains $(u_j)_{1\le j\le q}$. 
For  example, $\mathfrak{T}_{kh,j}$ is the set of all spanning forests of $\mathcal G$ consisting of two components where the first component is rooted at $k$ and also contains $h$ while the other is rooted at $j$. Note that $\mathfrak{T}_{k,j}=\bigcup_{h\in V\setminus\{j\}}\mathfrak{T}_{kh,j}$ is the set of all spanning forests of $\mathcal{G}$ with two components rooted at $k$ and $j$.

Recall that $\mathbb{R}_{*}^{V}=\{(z_v)_{v\in V}\in \R^V:  z_v\ge \ell_* \text{ for each } v\in V\}$. Fix a vector $z=\mathbb{R}_{*}^{V}$. For each spanning tree $T\in\mathfrak{T}_j$, set $$w(T, z):=w(z_j) \prod_{v\in V} w(z_v)^{\deg_T(v)-1},$$
and for each spanning forest $F\in \mathfrak{T}_{k,j}$, set
$$w(F,z):=w(z_k)w(z_j)\prod_{v\in V}  w(z_v)^{\deg_F(v)-1}.$$

\begin{proposition} \label{matrix.tree}
We have that 
$$\det(H^{(j)}(z))=(-1)^{|V|-1}\sum_{T\in\mathfrak{T}_j} w(T, z).$$
Furthermore, the cofactor matrix of $H^{(j)}(z)$, denoted by $C^{(j)}(z)=\big(C_{k,h}^{(j)}(z)\big)_{k,h\in V\setminus\{j\}}$, is given by 
 $$C_{k,h}^{(j)}(z)=(-1)^{|V|} \sum_{F\in\mathfrak{T}_{kh,j}}w(F, z).$$
\end{proposition}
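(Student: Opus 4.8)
The plan is to deduce both identities from the weighted matrix-tree theorem for directed graphs after recognising $-H(z)$ as an out-Laplacian, and then to convert the arc-weight products that the theorem produces into the vertex-degree products appearing in the statement. The key point that makes this conversion clean is that, in the digraph we attach to $\mathcal G$, the weight of an arc depends only on its head.

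\medskip
\noindent\textbf{Setup.} Let $\vec{\mathcal G}$ be the digraph on $V$ obtained by replacing each undirected edge $\{a,b\}\in E$ with the two arcs $(a,b)$ and $(b,a)$, and assign to an arc $(a,b)$ the weight $w(z_b)$. Let $\mathcal L^{\mathrm{out}}(z)$ be the out-Laplacian of $\vec{\mathcal G}$: its $(i,i)$ entry is $\sum_{k\sim i}w(z_k)$, the total weight of arcs leaving $i$, and its $(i,k)$ entry for $k\neq i$ is $-w(z_k)$. Then $\mathcal L^{\mathrm{out}}(z)=-H(z)$, and $H^{(j)}(z)$ is the principal submatrix of $-\mathcal L^{\mathrm{out}}(z)$ obtained by deleting the row and column indexed by $j$.

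\medskip
\noindent\textbf{The determinant formula.} I would apply the directed matrix-tree theorem (the Markov-chain tree theorem): $\det\bigl(\mathcal L^{\mathrm{out}}(z)[V\setminus\{j\};V\setminus\{j\}]\bigr)$ equals the sum over all spanning arborescences of $\vec{\mathcal G}$ oriented toward the root $j$ of the product of the weights of their arcs. Such arborescences correspond bijectively to $\mathfrak T_j$ by forgetting orientations. In an arborescence $T$ oriented toward $j$, every vertex $v\neq j$ has a unique out-arc, pointing to its parent $p(v)$, whose head is $p(v)$; hence the arc-weight product is $\prod_{v\neq j}w(z_{p(v)})=\prod_{u\in V}w(z_u)^{c_T(u)}$, where $c_T(u)$ is the number of children of $u$ in $T$. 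Since $c_T(u)=\deg_T(u)-1$ for $u\neq j$ and $c_T(j)=\deg_T(j)$, this product is $w(z_j)\prod_{u\in V}w(z_u)^{\deg_T(u)-1}=w(T,z)$. Finally $\det(H^{(j)}(z))=(-1)^{|V|-1}\det\bigl(\mathcal L^{\mathrm{out}}(z)[V\setminus\{j\};V\setminus\{j\}]\bigr)$, which gives the first claim; the sign $(-1)^{|V|-1}$ is the one forced by the fact that $\mathcal L^{\mathrm{out}}(z)[V\setminus\{j\};V\setminus\{j\}]$ is a nonsingular $M$-matrix with positive determinant, as the chain restricted to $V\setminus\{j\}$ can always reach $j$.

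\medskip
\noindent\textbf{The cofactor formula.} For $C^{(j)}_{k,h}(z)$ I would invoke the all-minors matrix-tree theorem: up to an explicit sign, $C^{(j)}_{k,h}(z)$ equals the minor of $\mathcal L^{\mathrm{out}}(z)$ obtained by deleting rows $\{j,k\}$ and columns $\{j,h\}$, and this minor is a signed sum, over bijections $\sigma\colon\{j,k\}\to\{j,h\}$, of the weighted $2$-component spanning forests whose components are rooted at $j$ and $k$ and such that the component rooted at $i$ contains $\sigma(i)$. The bijection $j\mapsto h,\ k\mapsto j$ contributes nothing, since it would force $j$ into the component rooted at $k$, which is impossible; only $j\mapsto j,\ k\mapsto h$ survives, and that is exactly the defining condition of $\mathfrak T_{kh,j}$. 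Converting arc weights to degrees as before — now every vertex except the two roots $k$ and $j$ supplies one out-arc — gives $\prod_{\text{arcs}}w(\cdot)=w(z_k)w(z_j)\prod_{u\in V}w(z_u)^{\deg_F(u)-1}=w(F,z)$, and tracking the overall sign produces the factor $(-1)^{|V|}$; the case $k=h$ is the principal-minor specialisation (the bijection is then forced and $\mathfrak T_{kk,j}=\mathfrak T_{k,j}$).

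\medskip
I expect the main obstacle to be the bookkeeping of signs and orientations: fixing the convention (arborescences oriented \emph{toward} $j$) consistently with the sign pattern of $H$, and applying the all-minors theorem precisely enough to land on $\mathfrak T_{kh,j}$ (rather than a forest with the roles of $k$ and $h$ exchanged) with the correct global sign $(-1)^{|V|}$. A fully self-contained alternative would prove the determinant identity by induction on $|V|$ via cofactor expansion along a column together with an edge deletion–contraction argument, and then derive the cofactor identity by a pointing argument; this avoids citing the all-minors theorem but makes the inductive forest bookkeeping heavier, so I would use it only as a fallback.
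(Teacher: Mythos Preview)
Your approach is essentially the same as the paper's: the authors simply state that the result ``follows directly from the matrix-tree theorem for weighted digraph'' and defer to an appendix where the relevant all-minors version is recorded. Your write-up supplies the details they omit---recognising $-H(z)$ as the out-Laplacian of the symmetrised digraph with head-dependent arc weights, converting the arc-product $\prod_{v\neq j}w(z_{p(v)})$ into the vertex-degree form $w(T,z)$, and eliminating the spurious bijection in the cofactor case---but the underlying argument is identical.
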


\begin{proof}
Recall that $\vec{E}$ is the set of all directed edges induced from $E$. Assign to each directed edge $e=(u,v)$ a weight $w(e):=w(z_v)$. Notice that $-H(z)$ is the outgoing Laplacian matrix of the weighted directed graph $(V, \vec{E}, w)$. 
The result of the proposition follows directly from the matrix-tree theorem for weighted directed graphs. See Section \ref{sec:mat-tree} in Appendix.
\end{proof}

\begin{lemma}\label{lem.kappa1}
For each $z\in \mathbb{R}_{*}^{V}$ and $i,j,k, h\in V$ in which $i\sim j$ and $h\sim k$, we have 
\begin{align}
\label{kappa1.up}|\kappa_{i,j,h}(k,z)|&\le  \frac{2\sum_{v\in V}w(z_v)}{w(z_i)w(z_j)},\\
\label{kappa1.low} |\kappa_{i,j,j}(i,z)|&\ge \frac{\sum_{v\in V}w(z_v)}{w(z_j)\sum_{s\in \mathcal{N}_j} w(z_s)}\quad\text{and}\\
\label{kappa1.prime}
\Big|\frac{\partial}{\partial z_k} \kappa_{i,j,h}(k,z)\Big|&\le 4d \frac{w'(z_k)\sum_{v\in V}w(z_v)}{w(z_k){w(z_i)w(z_j)}},
\end{align}
where we recall that $d=\max_{v\in V}\deg(v).$
\end{lemma}

\begin{proof} We first prove \eqref{kappa1.up}. Note that $H^{(j)}(z)^{-1} = \left( \det(H^{(j)}(z))\right)^{-1}C^{(j)}(z)^{\textsf T}$. Using Proposition \ref{matrix.tree}, we have that for $h,j \in V,$
\begin{align}\nonumber\mathbf{e}_h^{(j)}\cdot\left( H^{(j)}(z)\right)^{-1}\cdot\mathbf{1}^{\textsf T} & = \left( \det(H^{(j)}(z))\right)^{-1} \sum_{r\in V\setminus\{j\}} C_{r,h}^{(j)}(z)\\
 & =-  \frac{\sum_{r\in V\setminus\{j\}} \sum_{F\in\mathfrak{T}_{rh,j}}w(F, z) }{\sum_{T\in\mathfrak{T}_j} w(T, z)}.\label{frac}
\end{align}
For $h, k, j\in  V$, we thus get
\begin{align*}
&(\mathbf{e}_{k}^{(j)}-\mathbf{e}_h^{(j)})\cdot\left(H^{(j)}(z)\right)^{-1}\cdot\mathbf{1}^{\textsf T} = \frac{\displaystyle \sum_{r\in V\setminus\{j\}}\Big(\sum_{F\in\mathfrak{T}_{rh,j}} w(F, z)-\sum_{F\in\mathfrak{T}_{rk,j}} w(F, z)\Big)}{\displaystyle \sum_{T\in\mathfrak{T}_j}  w(T, z)} \\
&= \frac{\displaystyle \sum_{r\in V\setminus\{j\}} w(z_r)\Big(\sum_{F\in\mathfrak{T}_{rh,jk}}\prod_{v\in V}  w(z_v)^{\deg_F(v)-1}-\sum_{F\in\mathfrak{T}_{rk,jh}}\prod_{v\in V}  w(z_v)^{\deg_F(v)-1} \Big)}{\displaystyle \sum_{T\in\mathfrak{T}} \prod_{v\in V} w(z_v)^{\deg_T(v)-1}}.
\end{align*}
Hence, for any $h, k, i, j\in V$,
 \begin{align}\label{kappa.decomp}
   \nonumber \kappa_{i,j,h}(k,z)=& (\mathbf{e}_{k}^{(i)}-\mathbf{e}_h^{(i)})\cdot\left(H^{(i)}(z)\right)^{-1}\cdot\mathbf{1}^{\textsf T}-(\mathbf{e}_{k}^{(j)}-\mathbf{e}_h^{(j)})\cdot\left(H^{(j)}(z)\right)^{-1}\cdot\mathbf{1}^{\textsf T}\\
    & \nonumber = \frac{\displaystyle \sum_{r\in V} w(z_r)\Big[\sum_{F\in\mathfrak{T}_{rjh,ik}}-\sum_{F\in\mathfrak{T}_{rih,jk}}-\sum_{F\in\mathfrak{T}_{rjk,ih}}+\sum_{F\in\mathfrak{T}_{rik,jh}}\Big]\prod_{v\in V}  w(z_v)^{\deg_F(v)-1} }{\displaystyle \sum_{T\in\mathfrak{T}} \prod_{v\in V} w(z_v)^{\deg_T(v)-1}}\\
    & = \sum_{r\in V} w(z_r) \frac{\displaystyle \Big[\sum_{F\in\mathfrak{T}_{jh,ik}}-\sum_{F\in\mathfrak{T}_{ih,jk}}\Big]\prod_{v\in V}  w(z_v)^{\deg_F(v)-1} }{\displaystyle \sum_{T\in\mathfrak{T}} \prod_{v\in V} w(z_v)^{\deg_T(v)-1}}.
\end{align}
For each spanning forest $F=(V, E_F)\in \mathfrak{T}_{jh,ik} \cup \mathfrak{T}_{ih,jk}$, we note that $\{i,j\}$ is not an edge of $F$. Recall that $i\sim j$ on $\mathcal G$. By unrooting $F$ and connecting $i$ with $j$, we obtain an unrooted spanning tree $T=(V, E_T)\in \mathfrak{T}$. Notice also that
\begin{align*}
    \prod_{v\in V}  w(z_v)^{\deg_F(v)-1}=\frac{1}{w(z_i)w(z_j)} \prod_{v\in V}  w(z_v)^{\deg_T(v)-1}.
\end{align*}
As this establish an injection from $\mathfrak{T}_{jh,ik} \cup \mathfrak{T}_{ih,jk}$ to $\mathfrak{T}$, 
we have that $$|\kappa_{i,j,h}(k,z)| \le \frac{2\sum_{r\in V}w(z_r)}{w(z_i)w(z_j)}.$$ 

We next prove \eqref{kappa1.low}.  
Substitute $k=i$ and $h=j$ into \eqref{kappa.decomp}, we have
 \begin{align}\label{kappa.decomp22}
   \kappa_{i,j,j}(i,z) 
     = \frac{\displaystyle \sum_{r\in V} w(z_r)\sum_{F\in\mathfrak{T}_{i,j}}\prod_{v\in V}  w(z_v)^{\deg_F(v)-1} }{\displaystyle \sum_{T\in\mathfrak{T}} \prod_{v\in V} w(z_v)^{\deg_T(v)-1}}.
\end{align}
Let us compare the denominator and the numerator of the  fraction in \eqref{kappa.decomp22}. In each spanning tree $T\in \mathfrak{T}$, we choose a neighbour $s$ of $j$ such that $s$ is in the unique simple path connecting $i$ and $j$ on $T$. By deleting the edge $\{j,s\}$ and setting $i$ and $j$ respectively as the roots of the components containing $i$ and $j$, we obtain a spanning forest $F\in \mathfrak{T}_{i,j}$. We also notice that
$$\prod_{v\in V} w(z_v)^{\deg_T(v)-1}=w(z_j)w(z_s)\prod_{v\in V}  w(z_v)^{\deg_F(v)-1}.$$
As this defines an injection from $\mathfrak T$ to $\mathfrak{T}_{i,j}$, each term in the denominator after divided by $w(z_j)w(z_s)$  also appears in the numerator. Since $w(z_s)\le \sum_{s\in \mathcal{N}_j}w(z_s)$, we obtain \eqref{kappa1.low}.\\

Next, we prove \eqref{kappa1.prime}. For $i,j, h, k\in V$ and $z\in \R_{*}^{V}$, set 
\begin{align*}
&\widetilde{\kappa}_{i,j,h}(k,z)
:= \frac{\displaystyle w(z_k)\Big[\sum_{F\in\mathfrak{T}_{jh,ik}}-\sum_{F\in\mathfrak{T}_{ih, jk}}\Big]\prod_{v\in V}  w(z_v)^{\deg_F(v)-1}}{ \displaystyle \sum_{T\in\mathfrak{T}} \prod_{v\in V} w(z_v)^{\deg_T(v)-1}}\\ 
&+ 
 \sum_{r\in V} w(z_r)\frac{\displaystyle\Big[\sum_{F\in\mathfrak{T}_{jh,ik}:\deg_F(k)\ge 2}-\sum_{F\in\mathfrak{T}_{ih,jk}:\deg_F(k)\ge 2}\Big](\deg_F(k)-1)\displaystyle\prod_{v\in V}  w(z_v)^{\deg_F(v)-1}}{ \displaystyle \sum_{T\in\mathfrak{T}} \prod_{v\in V} w(z_v)^{\deg_T(v)-1}}\\
& \text{and} \quad J(k,z):=\frac{\displaystyle \sum_{T\in\mathfrak{T}: \deg_T(k)\ge 2} (\deg_T(k)-1)\prod_{v\in V} w(z_v)^{\deg_T(v)-1} }{\displaystyle \sum_{T\in\mathfrak{T}} \prod_{v\in V} w(z_v)^{\deg_T(v)-1}}.
\end{align*}
We notice that
\begin{align}\label{kappa.prime}
\frac{\partial }{\partial z_k } {\kappa}_{i,j,h}(k,z)=\frac{w'(z_k)}{w(z_k)}\big( \widetilde{\kappa}_{i,j,h}(k,z) - {\kappa}_{i,j,h}(k,z) J(k,z) \big).
\end{align}
Using \eqref{kappa1.up} and the fact that  $$\displaystyle|\widetilde{\kappa}_{i,j,h}(k,z)| \le  \frac{2d\sum_{v\in V} w(z_v)}{w(z_i)w(z_j)}\quad \text{and} \quad |J(k,z)|\le d,$$ we obtain \eqref{kappa1.prime}. 
\end{proof}

Recall that $W(t)=\sum_{v\in V}w(L(v,t))$.

\begin{proposition}\label{thm1} Assume that $i$ and $j$ are two neighbour vertices. We have that a.s.
$$\liminf_{t\to\infty} \frac {w(L(i,t))\wedge w(L(j,t))}{W(t)}=0.$$
\end{proposition}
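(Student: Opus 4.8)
The plan is to obtain the statement from the non-convergence Theorem~\ref{nonconvergence} applied to the semimartingale $Z_{i,j}=M_{ij}+A_{ij}$ constructed above. Fix the neighbours $i\sim j$. It suffices to show that for every $m\in\N$ the event
$$
\Gamma^{(m)}:=\bigcup_{n\in\N}\bigcap_{t\ge n}\Gamma_t^{(m)},\qquad
\Gamma_t^{(m)}:=\Bigl\{\,w(L(i,t))\wedge w(L(j,t))\ \ge\ \tfrac1m\,W(t)\,\Bigr\},
$$
has probability $0$, since $\{\liminf_{t\to\infty}(w(L(i,t))\wedge w(L(j,t)))/W(t)>0\}\subseteq\bigcup_{m}\Gamma^{(m)}$, and \eqref{intersection} holds trivially because $t\mapsto(L(t),W(t))$ is continuous. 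By Propositions~\ref{mart.M} and~\ref{decompostion} the process $\mathbf Z$ is \emph{good} in the sense of Definition~\ref{def:good}, with $Z_{i,j}(0)=0$, $\langle M_{ij}\rangle_t=\int_0^t\Lambda_{ij}(u)\rmd u$, and $Z_{i,j}(t)=R_j(t)-R_i(t)+\phi_{ij}(X_t,L(t))-\phi_{ij}(X_0,\ell)$; non-explosion of $\X$ on the finite graph guarantees finitely many jumps of $X$ on bounded intervals, so $M_{ij},A_{ij}$ are of finite variation. On $\Gamma^{(m)}$ one has $L(i,t)\wedge L(j,t)\to\infty$ (indeed $W(t)\to\infty$ always, since $\sum_v L(v,t)\to\infty$ and $w\to\infty$), hence $\int_{\ell_i}^{L(i,t)}\rmd u/w(u)\to\int_{\ell_i}^{\infty}\rmd u/w(u)$ and likewise for $j$ (both finite by Assumption~\ref{assump}(ii)), while $|\phi_{ij}(X_t,L(t))|\le K/(w(L(i,t))w(L(j,t)))\to 0$ by \eqref{phi.def} and \eqref{kappa1.up}. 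Thus $\Gamma^{(m)}\subseteq\{Z_{i,j}(t)\to\varrho\}$ with the deterministic limit $\varrho:=\int_{\ell_j}^{\infty}\rmd u/w(u)-\int_{\ell_i}^{\infty}\rmd u/w(u)-\phi_{ij}(X_0,\ell)$, where we condition on $X_0$ (finitely many values).

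Next I would record the regularity consequence of \eqref{lower.regu}. Because $1/w\in\mathsf{L}^{1}([\ell_*,\infty))$ forces $w(t)\to\infty$, \eqref{lower.regu} is equivalent to the $\Delta_2$-type statement: for each $\lambda>1$ there is $K_\lambda<\infty$ with $w(\lambda t)\le K_\lambda w(t)$ for all large $t$; moreover $K_\lambda\to\infty$ as $\lambda\to\infty$ (else $w$ would be bounded). It follows that for every $\eps>0$ there is $\delta(\eps)>0$ such that, for $s,t$ large, $w(s)\ge\eps w(t)$ implies $s\ge\delta(\eps)t$. Since $W(t)\ge w(\max_v L(v,t))\ge w(t/|V|)$, on $\Gamma^{(m)}$ we get $w(L(i,t))\ge w(t/|V|)/m$, hence $L(i,t)\ge\delta_m t$ and $L(j,t)\ge\delta_m t$ for all large $t$, with $\delta_m>0$ deterministic; in particular $c_* w(t)\le W(t)\le C_* w(t)$ on $\Gamma^{(m)}$ for large $t$. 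Consequently $L(i,t)-\ell_i=\int_0^t\1_{\{X_u=i\}}\rmd u\ge\delta_m t/2$ eventually, i.e.\ on $\Gamma^{(m)}$ the walker occupies $i$ (and $j$) for a linear fraction of the time.

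It remains to supply the comparison processes and check \eqref{lim.alpbeta}, \eqref{alpha}, \eqref{beta} globally and \eqref{F.ineq}, \eqref{quadvar}, \eqref{Z.bound}, \eqref{jumpsize} on $\Gamma_t^{(m)}$. For the quadratic variation, on $\Gamma_t^{(m)}$ the bound~\eqref{kappa1.up} of Lemma~\ref{lem.kappa1} together with $\sum_{h\sim X_t}w(L(h,t))\le W(t)$ gives $\Lambda_{ij}(t)\le K'm^4/W(t)^3\le\widetilde\alpha_t$ for a deterministic $\widetilde\alpha_t\asymp w(t)^{-3}$ with convergent tail ($1/w^3\le1/w$ eventually); retaining only the $h=j$ term of $\Lambda_{ij}(t)$ and using \eqref{kappa1.low} for $|\kappa_{i,j,j}(i,L(t))|$ gives $\Lambda_{ij}(t)\ge\widehat\alpha_t:=c\,\1_{\{X_t=i\}}w(L(j,t))/W(t)^4$. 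Since $X$ occupies $i$ for a linear fraction of the time on $\Gamma^{(m)}$, a dyadic-block estimate (integrating $\widehat\alpha_u\asymp\1_{\{X_u=i\}}w(u)^{-3}$ over $[2^kt,2^{k+1}t]$ and using $w(2t)\le K_2 w(t)$) yields $\int_t^\infty\widehat\alpha_u\rmd u\gtrsim\int_t^\infty w(u)^{-3}\rmd u$, so we take $\alpha_t:=\int_t^\infty\widetilde\alpha_u\rmd u$, a deterministic decreasing function comparable to $\int_t^\infty w(u)^{-3}\rmd u$. For the drift, Proposition~\ref{mart.M}(b) with \eqref{kappa1.up} and \eqref{kappa1.prime} bounds $|F_t|$ on $\Gamma_t^{(m)}$ by a multiple of $w'(L(X_t,t))/W(t)^2\le K''w'(L(X_t,t))/w(L(X_t,t))^3$; integrating and substituting $v=L(k,u)$ on $\{X_u=k\}$ gives $\int_t^\infty|F_u|\rmd u\lesssim\sum_{k\in V}\bigl(w(L(k,t))^{-2}-w(L(k,\infty))^{-2}\bigr)$, so $\beta_t$ is taken to be a large constant times $\sum_{k\in V}w(L(k,t))^{-2}$ (adapted, continuous, non-increasing), which on $\Gamma^{(m)}$ is dominated by the terms $k\in\{i,j\}$ and hence $\asymp w(t)^{-2}$. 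Then $\beta_t^2/\alpha_t\lesssim w(t)^{-4}/(t\,w(t)^{-3})=1/(t\,w(t))\to0$, which is \eqref{lim.alpbeta}. Boundedness $|Z_{i,j}(t)|\le K$ follows from Assumption~\ref{assump}(ii) and $|\phi_{ij}|\le K/w(\ell_*)^2$; a jump of $X$ changes $\phi_{ij}(X_t,L(t))$ by at most $2K/(w(L(i,t))w(L(j,t)))\lesssim_m w(t)^{-2}\lesssim\beta_t$ on $\Gamma_t^{(m)}$, giving \eqref{jumpsize}. The sandwich~\eqref{alpha}--\eqref{beta} is met by enlarging $\widehat\alpha_u,\widetilde\beta_u$ off $\Gamma_u^{(m)}$, which does not affect the conclusion. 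Theorem~\ref{nonconvergence} then gives $\P(\Gamma^{(m)})=0$, and letting $m\to\infty$ finishes the proof.

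The step I expect to be the main obstacle is the lower bound on the quadratic variation: $\Lambda_{ij}(t)$ is comparable to $w(t)^{-3}$ only when the walker sits at $i$ or $j$, so verifying $\alpha_t\le\kappa\int_t^\infty\widehat\alpha_u\rmd u$ requires quantitatively controlling the occupation time of $i$ against the decay of $w(t)^{-3}$; this is exactly where one needs both the linear growth of $L(i,\cdot)$ on $\Gamma^{(m)}$ (the consequence of \eqref{lower.regu} and $w\to\infty$ isolated in the second paragraph) and the $\Delta_2$-comparison $w(2t)\le K_2 w(t)$ inside the dyadic-block sum. A secondary technical point is showing the drift tail is $o(\sqrt{\alpha_t})$, which hinges on the precise expression for $F_t$ in Proposition~\ref{mart.M}(b) and the substitution $v=L(k,u)$.
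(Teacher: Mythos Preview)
Your overall strategy --- apply Theorem~\ref{nonconvergence} to $Z_{i,j}=M_{ij}+A_{ij}$ on the events $\Gamma^{(m)}$ --- is exactly the paper's. The gap is in your control of the drift.

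You set $\beta_t := C\sum_{k\in V} w(L(k,t))^{-2}$ and assert that on $\Gamma^{(m)}$ this is ``dominated by the terms $k\in\{i,j\}$ and hence $\asymp w(t)^{-2}$''. That claim is false. The event $\Gamma^{(m)}$ forces $L(i,\cdot),L(j,\cdot)\to\infty$ but says nothing about the remaining vertices; for any $k\notin\{i,j\}$ with $L(k,\infty)<\infty$ (which certainly can occur on $\Gamma^{(m)}$) the summand $w(L(k,t))^{-2}$ stays bounded away from $0$. Hence your $\beta_t$ does not tend to $0$ on $\Gamma^{(m)}$, while $\alpha_t\to0$, so \eqref{lim.alpbeta} fails and Theorem~\ref{nonconvergence} does not apply. (The quantity $\int_t^\infty\widetilde\beta_u\,\rmd u=C\sum_k\bigl(w(L(k,t))^{-2}-w(L(k,\infty))^{-2}\bigr)$ \emph{does} vanish, but it is not $\mathcal F_t$-adapted, and its smallest adapted majorant is exactly your $\beta_t$.) The paper's remedy is an exponent split: from $|F_{ij}(t)|\lesssim_\varepsilon w'(L(X_t,t))\big/\bigl(W(t)^{2}w(L(X_t,t))\bigr)$ one uses $W(t)\ge w(L(X_t,t))$ to pass to $w'(L(X_t,t))\big/\bigl(W(t)^{7/4}w(L(X_t,t))^{5/4}\bigr)$; bounding $W(u)^{-7/4}\le W(t)^{-7/4}$ and substituting $v=L(k,u)$ leaves the uniformly bounded factor $\sum_k w(L(k,t))^{-1/4}\le |V|\,w(\ell_*)^{-1/4}$, so one may take the deterministic $\beta_t\asymp w(t+|\ell|)^{-7/4}$, and Lemma~\ref{lim.w} with $p=7/2>3=q$ yields \eqref{lim.alpbeta}. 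Any split $W^{a}w(L(X_t,t))^{3-a}$ with $3/2<a<2$ works.

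A minor point: your dyadic-block argument for the lower sandwich in \eqref{alpha} is heavier than needed (and as written requires the occupation fraction $\delta_m>1/2$, which is not guaranteed). The paper takes $\widehat\alpha_t:=\varepsilon^4\1_{\{X_t=i\}}/w(L(i,t))^3$ and simply substitutes $v=L(i,u)$ to get $\int_t^\infty\widehat\alpha_u\,\rmd u=\varepsilon^4\int_{L(i,t)}^{L(i,\infty)}w(v)^{-3}\,\rmd v$; on $\Gamma^{(m)}$ one has $L(i,\infty)=\infty$ and $L(i,t)\le t+|\ell|$, whence $\int_t^\infty\widehat\alpha_u\,\rmd u\ge\varepsilon^4\int_{t+|\ell|}^\infty w(v)^{-3}\,\rmd v$. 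No occupation-time or $\Delta_2$ estimate is needed for this half of \eqref{alpha}.
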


\begin{proof}
Recall that
\begin{align}\label{decompZ}
    Z_{ij}(t)=M_{ij}(t)+A_{ij}(t)=R_j(t)-R_i(t)+\phi_{ij}(X_t,L(t))-\phi_{ij}(X_0,\ell_0).\end{align}
In virtue of Proposition \ref{mart.M}, we infer that $Z_{ij}$ is a ``good" process according to Definition \ref{def:good}.

For each $\varepsilon>0$ and $t\ge0$, let 
\begin{align*}
\Gamma_{ij}^{(\varepsilon)}(t):=\Big\{w(L(i,t))\wedge w(L(j,t))\ge \varepsilon W(t)  \Big\}\quad\text{and set}\quad \Gamma_{ij}^{(\varepsilon)}:= \bigcup_{k\in \N}^{\infty}\bigcap_{t\in[k,\infty)}\Gamma_{ij}^{(\varepsilon)}(t).\end{align*}
Notice that 
\begin{align*}
\Gamma_{ij}= \Big\{\liminf_{t\to\infty} \frac {w(L(i,t))\wedge w(L(j,t))}{W(t)}>0\Big\}=\bigcup_{n=1}^{\infty}\Gamma_{ij}^{(1/n)}.
\end{align*}
In virtue of \eqref{phi.def} and \eqref{kappa1.up}, we
have 
\begin{align}\label{inq.phi1}
|\phi_{ij}(X_t,L(t))|\le\frac{1}{W(t)^2}\sum_{h\in V\setminus\{X_t\}}w(L(h,t))|\kappa_{i,j,h}(X_t,L(t))|\le \frac{2}{w(L(i,t))w(L(j,t))}.
\end{align}
We thus have that for each $\varepsilon>0$,
$$\Gamma_{ij}^{(\varepsilon)}\subset\{L(i,\infty )=L(j,\infty )=\infty\}\subset \{ \lim_{t\to\infty}Z_{ij}(t)=\varrho_{ij}\},$$
in which $$\varrho_{ij}:=\int_{\ell_j}^{\infty}\frac{\rmd u}{w(u)}-\int_{\ell_i}^{\infty}\frac{\rmd u}{w(u)}-\phi_{ij}(X_0,\ell_0).$$

Let $\varepsilon>0$ be fixed. To complete the proof, we show that $\P(\Gamma_{ij}^{(\varepsilon)})=0$ by applying Theorem \ref{nonconvergence} to the process $(Z_{ij}(t))_{t\in\R_+}$ and the event $\Gamma_{ij}^{(\varepsilon)}$. 

We next verify that the conditions of Theorem \ref{nonconvergence} apply to this case. 
Notice from \eqref{decompZ} and \eqref{inq.phi1} that
\begin{align}\label{cnd0}|Z_{ij}(t)|\le \int_{\ell_j}^{\infty}\frac{\rmd u}{w(u)}+\int_{\ell_i}^{\infty}\frac{\rmd u}{w(u)}+|\phi_{ij}(X_0,\ell_0)| +\frac{2}{w(\ell_i)w(\ell_j)}.\end{align}
Using \eqref{kappa1.up} and \eqref{kappa1.low}, we notice that
\begin{align*}\Lambda_{ij}(t)&= \sum_{h\sim X_t}\frac{ w(L(h,t))}{W(t)^2} \kappa_{i,j,h}(X(t),L(t))^2\le  \frac{ 2\sum_{h\sim X_t} w(L(h,t))}{w(L(i,t))^2w(L(j,t))^2 },\\
\Lambda_{ij}(t)& \ge \frac{ w(L(j,t))}{W(t)^2} \kappa_{i,j,j}(i,L(t))^2\1_{\{X_t=i\}}\ge \frac{\1_{\{X_t=i\}}}{w(L(j,t))\Big(\sum_{s\in\mathcal{N}_j}w(L(s,t))\Big)^2}.
\end{align*}
 Set $$\widehat{\alpha}_{ij}(t) :=\frac{\varepsilon^4 \1_{\{X_t=i\}}}{w(L(i,t))^3}\quad\text{and}\quad\widetilde{\alpha}_{ij}(t):=\frac{2\varepsilon^{-4}}{W(t)^3}.$$ 
On the event $\Gamma_{ij}^{(\varepsilon)}(t)=\big\{w(L(i,t))\wedge w(L(j,t))\ge \varepsilon W(t) \big\}$, we thus have 
\begin{align}\label{cnd1}\widehat{\alpha}_{ij}(t)  \le \Lambda_{ij}(t)\le \widetilde{\alpha}_{ij}(t).\end{align}
Using the fact $w$ is non-decreasing and \eqref{lower.regu}, we note that $W(t)=\sum_{v\in V}w(L(v,t))\ge w\big((t+|\ell|)/|V|\big)\ge K_1 w(t+|\ell|)$ for some constant $K_1>0$, where we denote $|\ell|=\sum_{v\in V}\ell_v$. Set \begin{align*}
\alpha_{ij}(t) :=2\varepsilon^{-4} K_1^{-1}\int_{t+|\ell|}^{\infty}\frac{\rmd u}{w(u)^3}.
\end{align*}
We thus have
\begin{align}\label{cnd2}\int_t^{\infty}\widetilde{\alpha}_{ij}(u)\rmd u \le \alpha_{ij}(t)\le 2 \varepsilon^{-4} K_1^{-1}  \int_{L(i,t)}^{\infty}\frac{\rmd u}{w(u)^3}   = 2\varepsilon^{-8} K_1^{-1}\int_t^{\infty}\widehat{\alpha}_{ij}(u)\rmd u.\end{align}

From Proposition \ref{mart.M}.b, we have that
$A_{ij}(t)=\int_0^{t}F_{ij}(u)\rmd u$,
where
$$F_{ij}(u):=\sum_{h\in V}\frac{ w(L(h,u))}{W(u)^2}\left( \frac{2w'(L(X_u,u)) }{W(u)}\kappa_{i,j,h}(X_u,L(u))-\left.\frac{\partial \kappa_{i,j,h}(k,z)}{\partial z_k}\right|_{k=X_u, z=L(u)} \right).$$
Using \eqref{kappa1.up}   and \eqref{kappa1.prime}, we notice that
\begin{align*}
|F_{ij}(t)|&\le
  \frac{4 (W(t))'}{w(L(i,t))w(L(j,t))W(t)}
 + 4d\sum_{v\in V} \frac{\1_{\{X_t=v\}} w'(L(v,t))} {w(L(i,t))w(L(j,t))w(L(v,t))},
\end{align*}
On $\Gamma_{ij}^{(\varepsilon)}(t)$, we thus have that
\begin{align}\label{cnd3}
\nonumber |F_{ij}(t)| &\le  4\varepsilon^{-2}  \frac{(W(t))'}{W(t)^3}+4d\varepsilon^{-2}\sum_{v\in V} \frac{\1_{\{X_t=v\}}w'(L(v,t))}{W(t)^{2}w(L(v,t))}\\
&\le 4\varepsilon^{-2}  \frac{(W(t))'}{W(t)^3}+ {4d\varepsilon^{-2}} \sum_{v\in V} \frac{\1_{\{X_t=v\}}w'(L(v,t))}{W(t)^{7/4}w(L(v,t))^{5/4}}=:\widetilde{\beta}_{ij}(t).
\end{align}
Notice that 
\begin{align}\label{cnd4}
\nonumber\int_t^{\infty}\widetilde{\beta}_{ij}(u)\rmd u
&= \frac{2\varepsilon^{-2}}{W(t)^2}+{16d\varepsilon^{-2}}\sum_{v\in V}\int_{t}^{\infty}\frac{1}{W(u)^{7/4} }\rmd\Big(-\frac{1}{w(L(v,u))^{1/4}}\Big)\\
&\le \frac{2 \varepsilon^{-2}}{W(t)^2}+{16d\varepsilon^{-2}}\frac{\sum_{v\in V}{ w(\ell_v)^{-1/4}}}{W(t)^{7/4}} \le \frac{K_2 \varepsilon^{-2}}{w(t+|\ell|)^{7/4} } =:\beta_{ij}(t),
\end{align}
where $K_2$ is some positive constant. Using Lemma \ref{lim.w} (see Appendix) with $p=7/2$ and $q=3$, we notice that 
\begin{align}\label{cnd5}
    \frac{\beta_{ij}(t)}{\sqrt{\alpha_{ij}(t)}} \to 0\end{align}
as $t\to\infty$.

Using \eqref{decompZ}-\eqref{inq.phi1} and the fact that $R_i(t), R_j(t)$ and $L(t)$ are continuous in $t$, we have that for each $t\ge0$,
$$|\Delta Z_{ij}(t)|=|\phi_{ij}(X_t, L(t))-\phi_{ij}(X_{t-}, L(t-))|\le \frac{4}{w(L(i,t))w(L(j,t))}.$$
Hence on $\Gamma_{ij}^{(\varepsilon)}(t)$, we have that for $t\le s\le U_t:=\inf\big\{u\ge t\ : \ w(L(i,u))\wedge w(L(j,u))<\varepsilon W(u)\big\}$,
\begin{align}\label{cnd6}
    |\Delta Z_{ij}(s)|\le \frac{4 \varepsilon^{-2}}{W(s)^2}\le \frac{K_2\varepsilon^{-2}}{w(s+|\ell|)^{7/4}}=\beta_{ij}(s).\end{align}

Combining \eqref{cnd0}-\eqref{cnd1}-\eqref{cnd2}-\eqref{cnd3}-\eqref{cnd4}-\eqref{cnd5}-\eqref{cnd6}, we infer that all the conditions of Theorem \ref{nonconvergence} are fullfiled.  Therefore $\P(\Gamma^{(\eps)}_{ij})=0$ for any $\eps>0$, and thus $\P(\Gamma_{ij})=0.$ This ends the proof of the proposition.

\end{proof}

Recall that $\mathcal{N}_v$ is the set of all nearest neighbours of vertex $v$, and that $$\mathbb{R}_{*}^{V}=\{(z_v)_{v\in V}\in \R^V:  z_v\ge \ell_* \text{ for each } v\in V\}.$$

\begin{lemma}\label{lem.kappa2} Let $\gamma$ be a fixed positive constant. Let $i,j,k, h$ be vertices such that $\delta(i,j)=2$ and $h\sim k$.
Assume that $z=(z_v)_{v\ge0}$ be a vector in $\mathbb{R}_{*}^{V}$ such that 
$$\sum_{s\in \mathcal{N}_i\cup\mathcal{N}_j}w(z_s)\le \gamma.$$
There exists a positive constant $K$ depending only on   $\ell_*=\min_{v\in V}\ell_v$ and $d=\max_{v\in V}\deg(v)$ such that
\begin{align}\label{kappa2.up}|\kappa_{i,j,h}(k,z)|&\le K\frac{\sum_{v\in V}w(z_v)}{w(z_i)\wedge w(z_j)},
 \\
 \label{kappa2.low} \sum_{h\in \mathcal{N}_i}|\kappa_{i,j,h}(i,z)|&\ge \gamma^{-1}\frac{\sum_{v\in V}w(z_v)}{ w(z_i)} \quad\text{and} \\
\label{kappa2.prime} \Big|\frac{\partial }{\partial z_k}\kappa_{i,j,h}(k,z)\Big|&\le K \frac{w'(z_k)}{w(z_k)}\frac{\sum_{v\in V}w(z_v)}{w(z_i)\wedge w(z_j)}.\end{align}
\end{lemma}

\begin{proof}
We first prove \eqref{kappa2.up}. Recall from \eqref{kappa.decomp} that 
\begin{align*}
    \kappa_{i,j,h}(k,z)
    & = \frac{\displaystyle \sum_{r\in V} w(z_r)\Big[\sum_{F\in\mathfrak{T}_{jh,ik}}-\sum_{F\in\mathfrak{T}_{ih,jk}}\Big]\prod_{v\in V}  w(z_v)^{\deg_F(v)-1} }{\displaystyle \sum_{T\in\mathfrak{T}} \prod_{v\in V} w(z_v)^{\deg_T(v)-1}}.
\end{align*}
Consider a spanning forest $F=(V, E_F)\in \mathfrak{T}_{jh,ik} \cup \mathfrak{T}_{ih,jk}$. 
As $\delta(i,j)=2$ there exists $s\in V$ such that $\{s,i\}\in E$ and $\{s,j\}\in E$. On the other hand, as $i$ and $j$ belong to two distinct connected components in $F$, we must have that at least $\{s,i\}$ or $\{s,j\}$ is not an edge of $F$. By unrooting $F$ and connecting the missing edge(s), we obtain an unrooted spanning tree $T\in \mathfrak{T}$. Notice that
\begin{align*} \displaystyle \frac{\displaystyle \prod_{v\in V} w(z_v)^{\deg_F(v)-1}}{\displaystyle \prod_{v\in V} w(z_v)^{\deg_T(v)-1}}& =\left\{\begin{matrix}\displaystyle\frac{1}{w(z_s)^2w(z_i)w(z_j)} & \text{if }\{s,i\}\notin E_F\text{ and } \{s,j\}\notin E_F,\\ \displaystyle
\frac{1}{w(z_s)w(z_i)} & \text{if } \{s,i\}\notin E_F\text{ and } \{s,j\}\in E_F,\\ \displaystyle
\frac{1}{w(z_s)w(z_j)}&\text{if } \{s,i\}\in E_F\text{ and } \{s,j\}\notin E_F.
\end{matrix}\right.
\end{align*}
Using the assumption that $w(z_s)\ge w(\ell_*)$ for all $s\in  V$, we thus have
$${\prod_{v\in V} w(z_v)^{\deg_F(v)-1}}\le \frac{K}{w(z_i)\wedge w(z_j)}{\prod_{v\in V} w(z_v)^{\deg_T(v)-1}}$$
where $K$ is some constant depending only on $\ell_*$. Hence \eqref{kappa2.up} is verified.

We next prove \eqref{kappa2.low}. Substitute $k=i$ into \eqref{kappa.decomp}, we obtain
\begin{align} \kappa_{i,j,h}(i,z)  
&= \frac{\displaystyle \sum_{r\in V}w(z_r)\sum_{F\in\mathfrak{T}_{i,jh}}\prod_{v\in V}  w(z_v)^{\deg_F(v)-1} }{\displaystyle \sum_{T\in\mathfrak{T}} \prod_{v\in V} w(z_v)^{\deg_T(v)-1}}.\end{align}
In each spanning tree $T\in \mathfrak{T}$, we choose a neighbour $s$ of $i$ such that $s$ is in the unique simple path connect $i$ and $j$ on $T$. By deleting the edge  $\{i,s\}$ and setting $i$ and $j$ respectively as the roots of the components containing $i$ and $j$, we obtain a spanning forest $F\in \bigcup_{h\in \mathcal{N}_i}\mathfrak{T}_{i,jh}$. Notice that
$$\prod_{v\in V} w(z_v)^{\deg_T(v)-1}=w(z_i)w(z_s)\prod_{v\in V}  w(z_v)^{\deg_F(v)-1}\le \gamma w(z_i)\prod_{v\in V}  w(z_v)^{\deg_F(v)-1}.$$
Hence \eqref{kappa2.low} is verified. Using \eqref{kappa.prime} and \eqref{kappa2.up}, we obtain \eqref{kappa2.prime}. 
\end{proof}

\begin{proposition} \label{thm2} Assume that $i$ and $j$ are two vertices at distance 2. Then, on the event $\big\{\sum_{v\in \mathcal{N}_i\cup\mathcal{N}_j}L(v,\infty)<\infty\big\}$, we have that a.s.
$$\liminf_{t\to\infty} \frac {w(L(i,t))\wedge w(L(j,t))}{W(t)}=0.$$
 \end{proposition}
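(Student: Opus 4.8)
The plan is to mimic the proof of Proposition~\ref{thm1}, with Lemma~\ref{lem.kappa1} replaced by Lemma~\ref{lem.kappa2}, and to absorb the extra conditioning by a countable decomposition. Fix $\eps\in(0,1/2]$ and $\gamma>0$, and for $t\ge0$ set
\[
\Gamma_{ij}^{(\eps,\gamma)}(t):=\Big\{\min\{w(L(i,t)),w(L(j,t))\}\ge\eps W(t)\Big\}\cap\Big\{\textstyle\sum_{s\in\mathcal N_i\cup\mathcal N_j}w(L(s,t))\le\gamma\Big\},
\]
and $\Gamma_{ij}^{(\eps,\gamma)}:=\bigcup_{k\in\N}\bigcap_{t\ge k}\Gamma_{ij}^{(\eps,\gamma)}(t)$. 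Since $w$ is increasing and $\sum_{s\in\mathcal N_i\cup\mathcal N_j}w(L(s,t))\uparrow\sum_{s}w(L(s,\infty))$ on $\{\sum_{v\in\mathcal N_i\cup\mathcal N_j}L(v,\infty)<\infty\}$, the event $\big\{\sum_{v\in\mathcal N_i\cup\mathcal N_j}L(v,\infty)<\infty\big\}\cap\big\{\liminf_{t}\frac{w(L(i,t))\wedge w(L(j,t))}{W(t)}>0\big\}$ is contained in $\bigcup_{n,m\in\N}\Gamma_{ij}^{(1/n,m)}$, so it suffices to show $\P(\Gamma_{ij}^{(\eps,\gamma)})=0$ for every $\eps,\gamma$. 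As in Proposition~\ref{thm1}, I would apply Theorem~\ref{nonconvergence} to the good process $Z_{ij}(t)=M_{ij}(t)+A_{ij}(t)=R_j(t)-R_i(t)+\phi_{ij}(X_t,L(t))-\phi_{ij}(X_0,\ell)$ (Proposition~\ref{decompostion}) and to the adapted events $\Gamma_{ij}^{(\eps,\gamma)}(t)$; here $\Gamma_{ij}^{(\eps,\gamma)}\subseteq\{L(i,\infty)=L(j,\infty)=\infty\}$ because $W(t)\to\infty$, and on this last event $Z_{ij}(t)\to\varrho_{ij}$ for a deterministic $\varrho_{ij}$ since $|\phi_{ij}(X_t,L(t))|\le K/W(t)\to0$ by \eqref{phi.def} and \eqref{kappa2.up} and $R_j-R_i$ converges by strong reinforcement.

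The routine verifications carry over with Lemma~\ref{lem.kappa2} in place of Lemma~\ref{lem.kappa1}. On $\Gamma_{ij}^{(\eps,\gamma)}(t)$, \eqref{kappa2.up} and \eqref{phi.def} give $|\phi_{ij}(X_t,L(t))|\le K/W(t)$, hence $|Z_{ij}(t)|\le K'$ and $|\Delta Z_{ij}(t)|\le 2K/W(t)$; using \eqref{kappa2.up} and \eqref{kappa2.prime} in the expression for $F_{ij}$ from Proposition~\ref{mart.M}(b) one bounds the drift, and using \eqref{kappa2.up} one gets $\Lambda_{ij}(t)\le \frac{K^2}{W(t)^2}\sum_{h\sim X_t}w(L(h,t))$, which is of order $W(t)^{-2}$ whenever $X_t\in\{i,j\}$ (there the neighbour-sum is $\le\gamma$). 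The lower bound on the quadratic variation is the one genuinely new point. Since $\delta(i,j)=2$, $j\notin\mathcal N_i$, so the term $\kappa_{i,j,j}(i,\cdot)^2$ used in Proposition~\ref{thm1} does not occur in $\Lambda_{ij}$; instead I would fix a common neighbour $s\in\mathcal N_i\cap\mathcal N_j$ and exploit the symmetry $\kappa_{i,j,h}(k,z)=-\kappa_{i,j,k}(h,z)$ together with the identity $\kappa_{i,j,j}(i,z)=\kappa_{i,j,j}(s,z)-\kappa_{i,j,i}(s,z)$. By \eqref{kappa2.low} this forces $\kappa_{i,j,i}(s,z)^2+\kappa_{i,j,j}(s,z)^2\ge\frac1{2\gamma^2}$, so when $X_t=s$ (where $i$ and $j$ are both neighbours) one gets, on $\Gamma_{ij}^{(\eps,\gamma)}(t)$,
\[
\Lambda_{ij}(t)\ \ge\ \frac{w(L(i,t))}{W(t)^2}\kappa_{i,j,i}(s,L(t))^2+\frac{w(L(j,t))}{W(t)^2}\kappa_{i,j,j}(s,L(t))^2\ \ge\ \frac{\eps}{2\gamma^2\,W(t)}\,\1_{\{X_t=s\}}.
\]
To turn this into a useful lower bound for $\int_t^\infty\Lambda_{ij}(u)\rmd u$ I would note that, on $\Gamma_{ij}^{(\eps,\gamma)}$, the process accumulates unbounded local time at $i$, every visit to $i$ has duration bounded above and below (the exit rate from $i$ lies in $[|\mathcal N_i|w(\ell_*),\gamma]$), and each such visit is followed by a jump to the fixed common neighbour $s$ with probability $\ge w(\ell_*)/\gamma$; a conditional Borel--Cantelli argument then shows that the number of visits to $s$ by time $t$ is $\gtrsim L(i,t)$, and each contributes $\gtrsim W(u)^{-2}$ to $\int\Lambda_{ij}$. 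Changing variables to local time at $i$ and using \eqref{lower.regu}, this yields $\int_t^\infty\Lambda_{ij}(u)\rmd u\gtrsim \int_{t+|\ell|}^\infty w(u)^{-2}\rmd u$, so one may take $\alpha_{ij}(t)\asymp\int_{t+|\ell|}^\infty w(u)^{-2}\rmd u$. With $\beta_{ij}(t)\asymp w(t+|\ell|)^{-7/4}$ obtained exactly as in \eqref{cnd3}--\eqref{cnd4}, Lemma~\ref{lim.w} (now with $p=7/2$, $q=2$) gives $\beta_{ij}(t)/\sqrt{\alpha_{ij}(t)}\to0$, while \eqref{jumpsize} holds as in \eqref{cnd6}; Theorem~\ref{nonconvergence} then gives $\P(\Gamma_{ij}^{(\eps,\gamma)})=0$.

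The hard part will be the uniform upper control of $\Lambda_{ij}$ and of the drift $F_{ij}$. When $X_t$ is at a vertex adjacent to a ``heavy'' vertex, $\Lambda_{ij}(t)$ is only of order $W(t)^{-1}$ rather than $W(t)^{-2}$; near $i$ and $j$ this is harmless because the neighbourhoods $\mathcal N_i,\mathcal N_j$ carry finite total local time on the event (and visits to them have duration $\le(\eps W)^{-1}$), but a priori the process could also spend unbounded local time at vertices far from $i$ and $j$ that themselves belong to $V^*$. Handling these excursions — either by a further decomposition over the (finitely many) possible values of $V^*$, or by showing $\kappa_{i,j,h}(k,z)$ is suitably small when both $k$ and $h$ are far from $\{i,j\}$ — together with the power-splitting bookkeeping in \eqref{cnd3}--\eqref{cnd4} needed to make $\int_t^\infty\widetilde\alpha_{ij}\rmd u\le\alpha_{ij}(t)$ hold, is the main technical obstacle; the combinatorial lower bound $\kappa_{i,j,i}(s,\cdot)^2+\kappa_{i,j,j}(s,\cdot)^2\gtrsim1$ via the common neighbour (and its derivation from \eqref{kappa2.low}) is the conceptual one.
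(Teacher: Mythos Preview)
Your architecture---decompose the bad event over $(\eps,\gamma)$, apply Theorem~\ref{nonconvergence} to $Z_{ij}$ with the adapted events $\Gamma_{ij}^{(\eps,\gamma)}(t)$, feed in Lemma~\ref{lem.kappa2}---is exactly the paper's. The paper, however, is much terser and runs the verification with different (coarser) scales: it takes $\widetilde\alpha_{ij}(t)=K_2/W(t)$, $\alpha_{ij}(t)=K_3\int_{t+|\ell|}^\infty w(u)^{-1}\rmd u$, $\beta_{ij}(t)=K_5\, w(t+|\ell|)^{-3/4}$, and invokes Lemma~\ref{lim.w} with $p=3/2$, $q=1$ (not your $p=7/2$, $q=2$). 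For the lower bound it simply posits $\widehat\alpha_{ij}(t)=\eps^2\gamma^{-2}\1_{\{X_t=i\}}/w(L(i,t))$ and asserts that $\widehat\alpha_{ij}\le\Lambda_{ij}\le\widetilde\alpha_{ij}$ on $\Gamma_t$ can be proved ``similarly as in Proposition~\ref{thm1}'' via Lemma~\ref{lem.kappa2}, without addressing the point you raise (that $j\not\sim i$, so the term $\kappa_{i,j,j}(i,\cdot)$ used in Proposition~\ref{thm1} is absent from the sum defining $\Lambda_{ij}$). With the paper's coarse $\widetilde\alpha_{ij}=K/W(t)$ and $\alpha_{ij}\asymp\int w^{-1}$, the ``hard part'' you flag (heavy vertices far from $\{i,j\}$) never arises: the blanket bound $\Lambda_{ij}(t)\le K^2 W(t)^{-2}\sum_{h\sim X_t}w(L(h,t))\le K^2/W(t)$ from \eqref{kappa2.up} already gives $\int_t^\infty\widetilde\alpha_{ij}\le\alpha_{ij}(t)$, with no need to track where $X_t$ sits.

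Your own lower-bound route has a genuine gap. Bounding $\Lambda_{ij}$ from below only when $X_t=s$ (the common neighbour) and then invoking a conditional Borel--Cantelli argument is incompatible with the hypotheses of Theorem~\ref{nonconvergence}: condition~\eqref{quadvar} asks for the pointwise inequality $\widehat\alpha_t\le\Lambda_t$ a.s.\ on $\Gamma_t$, and condition~\eqref{alpha} for $\alpha_t\le\kappa\int_t^\infty\widehat\alpha_u\rmd u$ with $\alpha_t$ a fixed non-increasing process. Any $\widehat\alpha$ supported on $\{X_u=s\}$ has $\int_t^\infty\widehat\alpha_u\rmd u\to 0$ on the event (because $L(s,\infty)<\infty$), so \eqref{alpha} fails for every nontrivial $\alpha$; and a Borel--Cantelli count of visits to $s$ does not manufacture the pathwise inequality \eqref{quadvar}. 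To stay within the framework you must, as the paper does, place the lower bound at $X_t\in\{i,j\}$: when $X_t=i$ the common neighbour $s$ is itself a summand $h=s$ in $\Lambda_{ij}(t)$, and one should bound $\kappa_{i,j,s}(i,\cdot)$ from below directly (this, rather than \eqref{kappa2.low} with $h=j$, is the estimate actually needed). Your antisymmetry/telescoping identity $\kappa_{i,j,j}(i,z)=\kappa_{i,j,j}(s,z)-\kappa_{i,j,i}(s,z)=\kappa_{i,j,j}(s,z)+\kappa_{i,j,s}(i,z)$ is a good tool for this, but it should be used to control $\kappa_{i,j,s}(i,\cdot)$ rather than to shift the analysis to $X_t=s$.
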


\begin{proof}
Let $\varepsilon>0$ and $\gamma>0$ be fixed real numbers. For each $t\in\R_+$, set \begin{align*}
    \Gamma_{ij}^{(\eps, \gamma)}(t)&:=\Big\{ w(L(i,t))\wedge w(L(j,t))\ge \eps W(t)\quad\text{and}\quad\sum_{v\in \mathcal{N}_i\cup \mathcal{N}_j}w(L(v,t))\le \gamma  \Big\} \text{ and let}\\
\Gamma^{(\eps,\gamma)}_{ij}&:=\bigcup_{k\in \N}\bigcap_{t\in[k,\infty)}\Gamma_{ij}^{(\varepsilon,\gamma)}(t)\\
&=\Big\{\liminf_{t\to\infty} \frac {w(L(i,t))\wedge w(L(j,t))}{W(t)}\ge \varepsilon\quad\text{and}\quad \sum_{v\in \mathcal{N}_i\cup \mathcal{N}_j}w(L(v,t))\le \gamma  \Big\}.\end{align*}
In virtue of \eqref{phi.def} and \eqref{kappa2.up}, we
have that on $\Gamma_{ij}^{(\varepsilon)}(t)$,
\begin{align}\label{inq.phi2}
|\phi_{ij}(X_t,L(t))|\le\frac{1}{W(t)^2}\sum_{h\in V\setminus\{X_t\}}w(L(h,t))|\kappa_{i,j,h}(X_t,L(t))|\le \frac{K_1}{W(t)},
\end{align}
where $K_1$ is some positive constant. We thus have that 
$$\Gamma_{ij}^{(\varepsilon,\gamma)}\subset\{L(i,\infty )=L(j,\infty )=\infty\}\subset \{ \lim_{t\to\infty}Z_{ij}(t)=\varrho_{ij}\},$$
in which $$\varrho_{ij}:=\int_{\ell_j}^{\infty}\frac{\rmd u}{w(u)}-\int_{\ell_i}^{\infty}\frac{\rmd u}{w(u)}-\phi_{ij}(X_0,\ell_0).$$
To complete the proof, we apply Theorem \ref{nonconvergence} to the ``good" process $(Z_{ij})_{t\ge0}$ and the event $\Gamma_{ij}^{(\varepsilon,\gamma)}$. Using \eqref{inq.phi2}, we have that on $ \Gamma_{ij}^{(\eps, \gamma)}(t)$,
$$|Z_t|\le \int_{\ell_j}^{\infty}\frac{\rmd u}{w(u)}+\int_{\ell_i}^{\infty}\frac{\rmd u}{w(u)}+|\phi_{ij}(X_0,\ell_0)|+\frac{K_1}{W(0)}.$$ 
Define
\begin{align*}
    \widehat{\alpha}_{ij}(t) &:= \frac{\varepsilon^{2}\gamma^{-2}\1_{\{X_t=i\}}}{w(L(i,t))},\quad  \widetilde{\alpha}_{ij}(t) := \frac{K_2}{W(u)},\quad \alpha_{ij}(t):=K_3\int_{t+|\ell|}^{\infty}\frac{\rmd u}{w(u)},\\
   \widetilde{\beta}_{ij}(t)&:=K_4\Big(\frac{(W(t))'}{W(t)^2} + \sum_{v\in V}\frac{\1_{\{X_t=v\}}w'(L(v,t))}{w(L(v,t))W(t)}\Big) \quad\text{and}\quad \beta_{ij}(t):=\frac{K_5}{w(t+|\ell|)^{3/4}}
\end{align*}
in which $K_2$, $K_3, K_4$ and $K_5$ are some positive constants. Using Lemma \ref{lim.w} with $p=3/2$ and $q=1$, we notice that 
\begin{gather*}
\lim_{t\to \infty} \frac{\beta_{ij}(t)}{\sqrt{\alpha_{ij}(t)}}= 0.\end{gather*}
It is also clear that for each $t\in \R_+$, \begin{gather*}
\int_t^{\infty} \widetilde{\alpha}_{ij}(u)\rmd u\le \alpha_{ij}(t) \le K \int_t^{\infty}\widehat{\alpha}_{ij}(u)\rmd u\quad \text{and}\quad \int_t^{\infty} \widetilde{\beta}_{ij}(u)\rmd u\le \beta_{ij}(t). 
\end{gather*}
for some positive constant $K$. Using Lemma~\ref{lem.kappa2}, one can prove similarly as in Proposition~\ref{thm1} that for each $t\in \R_+$, on the event $\Gamma_{ij}^{(\varepsilon,\gamma)}(t)$,
\begin{gather*}
\widehat{\alpha}_{ij}(t) \le\Lambda_{ij}(t)\le \widetilde{\alpha}_{ij}(t),\quad |F_{ij}(t)|\le  \widetilde{\beta}_{ij}(t)\quad\text{and}\\
 |\Delta Z_{ij}(s)|\le {\beta}_{ij}(s) \quad\text{for all $t\le s\le U_t:=\inf\{u\ge t: \1_{\Gamma_{ij}^{(\varepsilon,\gamma)}(u)}=0\}$}. 
 \end{gather*}
Hence, all the conditions of Theorem \ref{nonconvergence} are fulfilled. Therefore $\P(\Gamma^{(\eps,\gamma)}_{ij}=0)$. This implies the desired result.
\end{proof}

\subsection{Localisation of VRJP on a finite graph} 
Our main aim in this subsection is to prove  the following theorem:

\begin{theorem}\label{localization} Assume that $\mathcal G=(V,E)$ is a finite graph and  $\X$ is a VRJP$(\ell, w)$ defined on $\mathcal G$ such that Assumption \ref{assump} and the condition of Theorem \ref{thm:main}(b) are fulfilled. 
Then, there exists a.s. a unique vertex $j\in V$ such that \begin{equation*} L(j,\infty)=\infty \quad\text{and}\quad L(v,\infty)<\infty\ \ \text{ for all  } v\in V\setminus\{j\}.\end{equation*}
\end{theorem}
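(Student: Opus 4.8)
The plan is to combine the two ``$\liminf$ ratio'' results (Propositions~\ref{thm1} and \ref{thm2}) with a soft argument showing that the set $V^*:=\{v\in V: L(v,\infty)=\infty\}$ is nonempty, connected, and in fact a singleton. First I would observe that $V^*\neq\emptyset$: since the graph is finite and the process is non-explosive, $\sum_{v\in V}L(v,t)=t+|\ell|\to\infty$, so some vertex must accumulate infinite local time. Next I would establish the two displays \eqref{ratio2} from the outline. For a connected component $U$ of $V^*$, the quantity $Y_{ij}(t)=\int_{\ell_i}^{L(i,t)}\rmd u/w(u)-\int_{\ell_j}^{L(j,t)}\rmd u/w(u)$ converges to the finite constant $\int_{\ell_i}^\infty\rmd u/w(u)-\int_{\ell_j}^\infty\rmd u/w(u)$ whenever $i,j\in U$ (both integrals converge by strong reinforcement); feeding in $w(t)\sim\kappa t^\alpha$ and inverting, this forces $L(i,t)/L(j,t)\to 1$ for all $i,j\in U$, hence $L(i,t)/L(U,t)\to 1/|U|$. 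The second part of \eqref{ratio2}, namely $\liminf_t L(U,t)/\sum_{j:\delta(j,U)=2}L(j,t)>0$, I would get from the jump-rate structure: the time spent at $U$ between successive excursions to distance-$2$ vertices dominates, because to reach distance $2$ from $U$ the process must pass through $\partial U$ (distance-$1$ neighbours), and a comparison of exponential clocks (as in Proposition~\ref{cutset} / Proposition~\ref{lem:loc1}) bounds the local time accumulated far from $U$ by a geometric-type factor times $L(U,t)$.

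The core contradiction step is then: suppose $V^*$ has a connected component $U$ and there is a vertex $j$ with $\delta(j,U)=2$ that also lies in $V^*$ (equivalently, suppose $V^*$ is disconnected, or suppose $|U|\ge 1$ but $V^*$ strictly contains a distance-$2$ pair). I would split into two cases. Case 1: there exist neighbours $i\sim j$ both in $V^*$. Then by Proposition~\ref{thm1}, $\liminf_t (w(L(i,t))\wedge w(L(j,t)))/W(t)=0$ a.s.; but $i,j\in V^*$ with $W(t)=\sum_v w(L(v,t))$ and $w(L(i,t))/w(L(j,t))\to 1$ and in fact (using \eqref{ratio2} for the component containing $i,j$) $w(L(i,t))/W(t)$ is bounded below — contradiction. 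Case 2: there exist $i,j$ at distance $2$, both in $V^*$, whose common neighbours all have finite local time. Then Proposition~\ref{thm2} applies on the event $\{\sum_{v\in\mathcal N_i\cup\mathcal N_j}L(v,\infty)<\infty\}$ and again yields $\liminf_t (w(L(i,t))\wedge w(L(j,t)))/W(t)=0$, contradicting $i,j\in V^*$. So I must first argue that one of these two cases always occurs when $V^*$ is not a single vertex: if $V^*$ is connected with $|V^*|\ge 2$ we are in Case~1; if $V^*$ is disconnected, pick components $U_1,U_2$ — using \eqref{ratio2} (the distance-$2$ domination) one shows two distinct components cannot be at distance $\ge 3$ (else the one far from the other's ``distance-$2$ shell'' argument fails to balance), so there are $i\in U_1$, $j\in U_2$ at distance exactly $2$; moreover their common neighbours are not in $V^*$ (a common neighbour in $V^*$ would merge $U_1,U_2$), so Case~2 applies. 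Hence $V^*$ is connected; and then if $|V^*|\ge 2$, Case~1 gives a contradiction, so $|V^*|=1$.

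I would finish by noting that once $|V^*|=1$, say $V^*=\{j\}$, the statement is exactly what we want: $L(j,\infty)=\infty$ and $L(v,\infty)<\infty$ for all $v\neq j$. (That the remaining vertices — the neighbours of $j$ — are visited infinitely often, and distance-$\ge 2$ vertices only finitely often, is the content of Theorem~\ref{thm:main}(b) and would be handled separately, presumably right after this theorem, via the strong construction and a Borel--Cantelli argument on the exponential clocks controlling excursions away from $j$.)

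The main obstacle I anticipate is the second inequality in \eqref{ratio2}, the claim $\liminf_t L(U,t)/\sum_{j:\delta(j,U)=2}L(j,t)>0$, and relatedly the geometric/topological step showing two distinct components of $V^*$ must lie at distance exactly $2$. Both require a careful excursion decomposition: one needs to control, uniformly in time, the local time the process can deposit at a fixed distance-$2$ vertex $j$ per unit of local time it deposits on $U$, and this is delicate precisely because $w$ is superlinear, so the clocks at vertices with large local time are very fast and the naive comparison is not tight. I expect to handle this by conditioning on the local-time configuration at the relevant boundary vertices and invoking a Rubin-type comparison (as in the proof of Proposition~\ref{lem:loc1}) to get an exponential-moment bound on the excursion lengths, then summing. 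The rest — nonemptiness of $V^*$, the ratio $L(i,t)/L(j,t)\to 1$ within a component, and the case analysis — is routine given Propositions~\ref{thm1} and \ref{thm2}.
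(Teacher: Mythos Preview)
Your overall architecture matches the paper's, but there is a genuine gap in how you propose to obtain the first display in \eqref{ratio2}. You claim that the convergence of $Y_{ij}(t)=\int_{\ell_i}^{L(i,t)}\rmd u/w(u)-\int_{\ell_j}^{L(j,t)}\rmd u/w(u)$ to a finite constant, together with $w(t)\sim\kappa t^{\alpha}$, forces $L(i,t)/L(j,t)\to 1$ for $i,j\in V^*$. This is false: once $i,j\in V^*$, both integrals converge separately (by strong reinforcement), so the convergence of their difference is automatic and carries no information about the ratio $L(i,t)/L(j,t)$. Concretely, with $w(t)\sim\kappa t^{\alpha}$ one has $\int_x^{\infty}\rmd u/w(u)\sim c\,x^{1-\alpha}$, and $L(i,t)^{1-\alpha}-L(j,t)^{1-\alpha}\to 0$ holds for any pair of sequences tending to infinity, regardless of their ratio. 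The paper instead establishes $L(i,t)/L(U,t)\to 1/|U|$ via Proposition~\ref{thm:local}, a quantitative lower bound $\liminf_t L(i,t)/L(\mathcal N_i,t)\ge 1/\deg^*(i)$ proved by an excursion-counting argument with Cram\'er--Chernoff bounds; this is not a soft step and cannot be replaced by the convergence of $Y_{ij}$.

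There is a second, smaller gap in your Case~2. You only verify that the \emph{common} neighbours of $i$ and $j$ lie outside $V^*$, but Proposition~\ref{thm2} requires the stronger hypothesis $\sum_{v\in\mathcal N_i\cup\mathcal N_j}L(v,\infty)<\infty$, i.e.\ that \emph{all} neighbours of both $i$ and $j$ have bounded local time. If, say, $U_1$ has at least two vertices, then $i\in U_1$ has a neighbour in $V^*$, and Proposition~\ref{thm2} does not apply. The paper sidesteps this by arguing globally: once Corollary~\ref{corol:dist1} and Proposition~\ref{dist2.lim} are in hand, multiple components would force $\liminf_t L(i,t)/t>0$ for every $i\in V^*$, contradicting Proposition~\ref{thm1} (for adjacent pairs) or Proposition~\ref{thm2} (for distance-two pairs with all neighbours outside $V^*$, which exist because components are at distance $\ge 2$). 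Your acknowledged ``main obstacle'' (Proposition~\ref{dist2.lim}) is indeed proved in the paper by the same kind of Chernoff-bound excursion analysis as Proposition~\ref{thm:local}, not by a Rubin-type clock comparison.
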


We then use the above result to  prove  Theorem \ref{thm:main}(b). The proofs of Theorem \ref{localization} and Theorem \ref{thm:main}(b) are included at the end of this subsection.

We suppose from now on that Assumption \ref{assump} and the condition of Theorem \ref{thm:main}(b) are fulfilled. We emphasize that in the proofs throughout this subsection, we assume there exists constants $\alpha>1$ and $C>1$ such that 
\begin{equation}\label{weight.w}
    C^{-1} t^{\alpha}\le w(t)\le C t^{\alpha} \text{ for all }  t\ge \ell_*=\min_{v\in V}\ell_v.
\end{equation}
Let $$V^*:=\{i\in V : L(i,\infty)=\infty\}$$ be the random set of vertices with unbounded local times. 
Recall that we denote by $\mathcal{N}_i$ the set of all nearest neighbours of $i$.

\begin{proposition}\label{thm:local} There exists a deterministic positive constant $\eps_0$ depending only on the graph $\mathcal{G}$ and the function $w$ such that 
for each $i\in V^*$, we have that almost surely
$$\liminf_{t\to\infty} \frac{L(i,t)}{L(\mathcal{N}_i,t)}\ge \eps_0.$$
\end{proposition}

\begin{proof}
For each vertex $i\in V$ and $\eps>0$, let $$\mathcal{A}_{i,\eps}:=\Big\{L(i,\infty)=\infty\text{ and }   \liminf_{t\to\infty} \frac{L(i,t)}{L(\mathcal{N}_i,t)}\le \frac{\eps}{2} \Big\}.$$ 
In order to complete the proof, it suffices to show that $\P(\mathcal
A_{i,\eps})=0$ for all $i\in V$ and for all sufficiently small $\eps$.

Fix $i\in V$ and  $\eps>0$.  Let $T_0=0$ and for  $n\in \N$,
$$T_{n}:= \inf\{t\ge T_{n-1}: L(i, t)\le \eps L(\mathcal{N}_i, t), L(\mathcal{N}_i, t) \ge n^2\}.$$
Notice that on $\mathcal{A}_{i,\eps}$, we have $T_n<\infty$ for all $n\ge0$. Set $\nu_n=(L(\mathcal{N}_i,T_n))^{1/2}$. Note that $\nu_n\ge n$. For each $k\ge 0$ let $\sigma_k^{(n)}:=\inf\{ t\in \R_+ : L(\mathcal{N}_i, t)= (\nu_n+k)^2\}$. Set $$\gamma_{k}^{(n)}:=\frac{L(i,\sigma_k^{(n)})}{L(\mathcal{N}_i,\sigma_k^{(n)})}=\frac{L(i,\sigma_k^{(n)})}{(\nu_n+k)^2} \quad\text{and}\quad \Delta_k^{(n)}:= L(i,\sigma_{k+1}^{(n)})-L(i,\sigma_k^{(n)}).$$
Both $\gamma_{k}^{(n)}$ and $\Delta_k^{(n-1)}$ are  $\mathcal{F}_{\sigma_{k}^{(n)}}$-measurable.
Set $$A_{k}^{(n)}:=\{ \gamma_k^{(n)}\le \eps  \}=\big\{ L(i, \sigma_k^{(n)})\le \eps (\nu_n+k)^{2}\big\}.$$
Note that $\mathcal{A}_{i,\eps}\subset A_0^{(n)}$ for each $n\in\N$. Recall that there exists a constant $C>1$ such that for all $t\ge \min_{v\in V}\ell_v$,  \eqref{weight.w} holds. 
Set $q:=2C^{2} (2\deg(i))^{\alpha}$ and we assume that $0<\eps\le  (2q)^{-\frac{1}{\alpha-1}}$.
\begin{claim}\label{claim1}There exists a positive  constant $c_1\in \R$ such that for all $n\ge 1$ and $k\ge 0$, 
\begin{align*}
\P\big(\mathcal{A}_{i,\eps}\cap A_k^{(n)},\Delta_k^{(n)}\ge 2q (\nu_n+k)  (\gamma_k^{(n)})^{\alpha}   \big)\le  \exp\{- c_1 (n+k)\}.
\end{align*}
\end{claim}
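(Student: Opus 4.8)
The plan is to analyze the increment $\Delta_k^{(n)} = L(i, \sigma_{k+1}^{(n)}) - L(i, \sigma_k^{(n)})$ conditionally on $\mathcal{F}_{\sigma_k^{(n)}}$ and bound its conditional tail probability using the strong construction of the VRJP. Working on the event $B_{m,h} \cap A_k^{(n)}$, the ``budget'' of local time accumulated at all neighbours of $i$ between times $\sigma_k^{(n)}$ and $\sigma_{k+1}^{(n)}$ is exactly $(\nu_n+k+1)^2 - (\nu_n+k)^2 = 2(\nu_n+k) + 1$, by definition of the stopping times $\sigma_\cdot^{(n)}$. During that time window, whenever the process sits at a neighbour $v$ of $i$ with $v \in V^*$, the rate of jumping to $i$ is $w(L(i,\cdot)) \le w\big((h+\eps)^{-1}(\nu_n+k+1)^2 \cdot (1+\text{small})\big)$ as long as we have not yet left the regime controlled by $A_k^{(n)}$; meanwhile the total rate of leaving $v$ is at least $w(L(v,\cdot)) \ge w(\text{something comparable to }(\nu_n+k)^2/\deg^*(i))$ since the $\deg^*(i)=h$ neighbours in $V^*$ share the bulk of $L(\mathcal N_i,\cdot)$. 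The key point is that each ``excursion'' to $i$ contributes to $L(i,\cdot)$ an amount that is stochastically dominated by an exponential random variable with rate $w(L(v,\cdot))$ (the holding time at $i$), and the number of such excursions in the window is controlled by a comparison with the number of visits $N(v,i,\cdot)$, itself bounded via the Poisson clocks.

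Concretely, I would first fix the asymptotics $w(t)\sim Ct^\alpha$ to replace, for $n$ large, every occurrence of $w$ evaluated at a large argument by $Ct^\alpha(1\pm\eps_0)$; this is where the constant $\eps_0 = \frac12((1+\eps/h)^{(\alpha-1)/5}-1)$ and the exponent $5$ in the claim enter, each power of $(1+\eps_0)$ absorbing one multiplicative slippage (there are roughly five: the ratio $L(i)/L(\mathcal N_i)$ vs.\ $1/(h+\eps)$, the value of $L(i)$ at $\sigma_k$ vs.\ $\sigma_{k+1}$, the split of $L(\mathcal N_i)$ among $h$ neighbours, the $w$-asymptotics upstairs, and the $w$-asymptotics downstairs). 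Then I would write $\Delta_k^{(n)}$ as a sum over visits to $i$ of holding times, each holding time being $\chi^{(i,\cdot)}_{\gamma}/w(L(v,\cdot))$ for the appropriate clock and an appropriate neighbour $v$; conditionally these are dominated by i.i.d.\ exponentials with rate bounded below by $c\,(h^{-1}(\nu_n+k)^2)^{\alpha}$, up to the $(1\pm\eps_0)$ factors. The expected value of $\Delta_k^{(n)}$ on the event is then of order $(\text{number of visits})\times(\text{mean holding time})$, and the number of visits is itself controlled because each visit to $i$ from a neighbour $v\in V^*$ must be preceded by consuming one Poisson arrival on an edge $(v,i)$ at rate $w(L(v,\cdot))$, while the competing rate of moving elsewhere from $v$ keeps the visit count from being too large — more carefully, the amount $2(\nu_n+k)+1$ of neighbour local-time budget, at rate $\gtrsim (\nu_n+k)^{2\alpha}$ of leaving, permits only $O((\nu_n+k)^{2\alpha+1})$ neighbour-steps, each of which sends mass to $i$ with probability $\lesssim (\nu_n+k)^{2\alpha}/((\nu_n+k)^{2\alpha}+\ldots)$... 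Rather than track this exactly I would instead bound $\Delta_k^{(n)}$ directly: on $A_k^{(n)}$ the local time $L(i,\cdot)$ cannot exceed $(h+\eps)^{-1}(\nu_n+k+1)^2$ plus whatever accrues, so one shows the process $L(i,\cdot)$ restricted to this window is dominated by a time-changed pure-birth / branching-type process whose expected displacement is the claimed $2(1+\eps_0)^5 h^{\alpha-1}(\nu_n+k)(\gamma_k^{(n)})^\alpha$, and then apply a Chernoff/exponential-Markov bound to get the $\exp\{-c_1(n+k)\}$ decay — the extra $(n+k)$ in the exponent comes from the fact that $\nu_n+k \ge n+k$ and the deviation being bounded is at the scale of the mean, which itself grows like $(\nu_n+k)$.

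The main obstacle I anticipate is making rigorous the domination of $\Delta_k^{(n)}$ by a tractable random variable with the sharp constant $2h^{\alpha-1}(\nu_n+k)(\gamma_k^{(n)})^\alpha$: one has to simultaneously (i) control the number of returns to $i$ in the window using the Poisson-clock representation and the fact that the $h$ neighbours in $V^*$ carry almost all of $L(\mathcal N_i,\cdot)$ (so each has rate $\approx w((\nu_n+k)^2/h)$ of being left), (ii) control each holding time at $i$ by the current rate $w(L(v,\cdot))$ out of the neighbour $v$ the process will jump to — but the holding time at $i$ is governed by the \emph{next} neighbour's clock, requiring care about which $L(v,\cdot)$ appears — and (iii) ensure that the a priori bound $\gamma_k^{(n)}\le 1/(h+\eps)$ on $A_k^{(n)}$ is not destroyed within the window before the deviation event can occur, which is why the event in the claim is the \emph{intersection} $B_{m,h}\cap A_k^{(n)}\cap\{\Delta_k^{(n)}\ge\ldots\}$ rather than a statement about all of $B_{m,h}$. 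The exponential tail itself, once the right conditionally-exponential or conditionally-Poissonian structure is exposed, should follow from a routine Cramér bound, and the summability in $k$ and $n$ of $\exp\{-c_1(n+k)\}$ is exactly what a subsequent Borel–Cantelli argument will need to conclude $\P(\mathcal A_{i,h,\eps,m})=0$.
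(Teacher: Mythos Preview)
Your proposal identifies the right ingredients --- the polynomial asymptotics $w(t)\sim Ct^\alpha$, a Cram\'er--Chernoff bound, and the fact $\nu_n\ge n$ --- and you correctly flag the main obstacle: to bound the number of visits to $i$ in the window $[\sigma_k^{(n)},\sigma_{k+1}^{(n)}]$ you need an upper bound on the jump rate $w(L(i,\cdot))$, which in turn requires controlling $\Delta_k^{(n)}$ itself. Your sketch does not actually resolve this circularity; the ``time-changed pure-birth / branching-type'' domination is left unspecified, and the statement that the holding time at $i$ is ``$\chi^{(i,\cdot)}_\gamma/w(L(v,\cdot))$ for the appropriate neighbour $v$'' is incorrect (it is the minimum over all neighbours, i.e.\ exponential with rate $\sum_{v\sim i}w(L(v,\cdot))$).

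The paper dissolves the circularity by reversing the roles of $i$ and $\mathcal N_i$. It introduces the stopping time
\[
\xi_k^{(n)}:=\inf\bigl\{t>\sigma_k^{(n)}:\ L(i,t)-L(i,\sigma_k^{(n)})=2(1+\eps_0)^5 h^{\alpha-1}(\nu_n+k)(\gamma_k^{(n)})^\alpha\bigr\},
\]
so that on $[\sigma_k^{(n)},\xi_k^{(n)}]$ the quantity $L(i,\cdot)$ is \emph{deterministically} capped by $\gamma_k^{(n)}(\nu_n+k)^2+\text{threshold}$. It then lower-bounds $L(\mathcal N_i,\xi_k^{(n)})-L(\mathcal N_i,\sigma_k^{(n)})$: the number $H_k^{(n)}$ of jumps $i\to\mathcal N_i$ on this interval dominates a Poisson with parameter $a_k^{(n)}=(\text{threshold})\cdot\sum_{v\sim i}w(L(v,\sigma_k^{(n)}))$, and each excursion into $\mathcal N_i$ contributes $\mathcal N_i$-local-time at least exponential with rate $b_k^{(n)}=w(L(i,\xi_k^{(n)}))$ (since the return-to-$i$ clock runs only while the process is in $\mathcal N_i$, at rate at most $b_k^{(n)}$). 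Jensen's inequality on the $h$ neighbours in $V^*$ gives $a_k^{(n)}\ge \widetilde a_k^{(n)}=2(1+\eps_0)^3 C(\gamma_k^{(n)})^\alpha(\nu_n+k)^{2\alpha+1}$, while the cap on $L(i,\cdot)$ gives $b_k^{(n)}\le\widetilde b_k^{(n)}=C(1+\eps_0)(\gamma_k^{(n)})^\alpha(\nu_n+k)^{2\alpha}$. Since $\widetilde a_k^{(n)}/\widetilde b_k^{(n)}=2(1+\eps_0)^2(\nu_n+k)$, a Cram\'er bound yields $L(\mathcal N_i,\xi_k^{(n)})-L(\mathcal N_i,\sigma_k^{(n)})>2(1+\eps_0)(\nu_n+k)>(\nu_n+k+1)^2-(\nu_n+k)^2$ except on an event of probability $\le e^{-c_1(\nu_n+k)}$. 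This forces $\sigma_{k+1}^{(n)}<\xi_k^{(n)}$, i.e.\ $\Delta_k^{(n)}<\text{threshold}$, which is the claim.

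The inversion is the missing idea: by fixing the $i$-time budget (via $\xi_k^{(n)}$) and lower-bounding the $\mathcal N_i$-time, both required rate bounds --- a \emph{lower} bound on $\sum_{v\sim i}w(L(v,\cdot))$ (rate out of $i$) and an \emph{upper} bound on $w(L(i,\cdot))$ (rate back to $i$) --- are simultaneously available on $[\sigma_k^{(n)},\xi_k^{(n)}]$. Your direct approach would need the opposite pair of bounds on the window $[\sigma_k^{(n)},\sigma_{k+1}^{(n)}]$, and the upper bound on $w(L(i,\cdot))$ is unavailable there without already knowing $\Delta_k^{(n)}$.
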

We first prove Claim \ref{claim1}. Set $$\xi_k^{(n)} := \inf\{ t > \sigma_k^{(n)} \colon L(i, t) - L(i, \sigma_k^{(n)}) = 2q  (\nu_n+k) (\gamma_k^{(n)})^{\alpha} \}.$$ Denote by $H_k^{(n)}$ the number of jumps from $i$ to $\mathcal{N}_i$ during the time interval $[\sigma_k^{(n)}, \xi_k^{(n)})$. Notice that there exists a neighbour $v\in \mathcal{N}_i$ such that $L(v,\sigma_k^{(n)})\ge L(\mathcal{N}_i,\sigma_k^{(n)})/{\deg(i)}=(\nu_n+k)^2/{\deg(i)}$.
 Hence, conditioning on $\mathcal{F}_{\sigma_{k}^{(n)}}$,  $H_k^{(n)}$ stochastically dominates a Poisson random variable with parameter $$a_k^{(n)}=2q(\nu_n+k)(\gamma_k^{(n)})^{\alpha}w\big((\nu_n+k)^2/{\deg(i)}\big).$$
In other words, there exist independent exponential random variables $(\chi_j)_{j\in \N}$ with parameter 1, which are independent of $\mathcal{F}_{\sigma_k^{(n)}}$ such that 
$$H_{k}^{(n)}\ge S(a_{k}^{(n)}), \text{ where we define } S(a):=\inf\Big\{ j: \sum_{s=1}^{j}\chi_{s} \ge  a \Big\}-1.$$
On the other hand,  conditioning on $\mathcal{F}_{\sigma_{k}^{(n)}}\cap A_k^{(n)}$, each visit to $\mathcal{N}_{i}$ before jumping to $i$ in the interval $[\sigma_k^{(n)}, \xi_k^{(n)})$ has a duration which stochastically dominates an exponential with rate
\begin{align*}
    w\big(L(i,\xi_k^{(n)})\big)=w\big(\gamma_k^{(n)} (\nu_n+k)^2 +2q(\nu_n+k)(\gamma_k^{(n)})^{\alpha} \big)\le w\left(2\gamma_k^{(n)} (\nu_n+k)^2\right) := b_k^{(n)},
\end{align*}
where the above inequality follows from the fact that  $A_k^{(n)}=\{\gamma_{k}^{(n)}\le \eps\}$ and  $q\eps^{\alpha-1}\le 1/2$.
Hence, there exist independent exponential random variables $(\zeta_j)_{j\in \N}$ with parameter 1, which are independent of $\mathcal{F}_{\sigma_k^{(n)}}$ and $(\chi_j)_{j\in \N}$ such that
\begin{align}\label{sto.dom}
L(\mathcal{N}_i,\xi_k^{(n)})-L(\mathcal{N}_i,\sigma_k^{(n)})\ge \frac{1}{b_{k}^{(n)}}\sum_{j=1}^{S(a_{k}^{(n)})} \zeta_{j} \quad\text{on}\quad A_{k}^{(n)}.
\end{align}
In virtue of \eqref{weight.w}, we notice that 
\begin{align*}
a_k^{(n)}  \ge 2C^{-1} q (\deg(i))^{-\alpha} (\gamma_k^{(n)})^{\alpha}(\nu_n+k)^{2\alpha+1} \quad \text{and} \quad
  b_k^{(n)} \le C 2^{\alpha} (\gamma_k^{(n)})^{\alpha} (\nu_n+k)^{2\alpha}.
\end{align*}
Note also that ${{a}_{k}^{(n)}}/{{b}_{k}^{(n)}}=2C^{-2} (2\deg(i))^{-\alpha}q(\nu_n+k)=4 (\nu_n+k) $. Using a Cram\'er-Chernoff bound, we notice that there exists a deterministic constant $c_1$ such that
\begin{align}\label{chernoff} 
\P\Big(\frac{1}{{b}_{k}^{(n)}}\sum_{j=1}^{S({a}_{k}^{(n)})} \zeta_{j}\le 4(\nu_n+k)  \ |\ \mathcal{F}_{\sigma_{k}^{(n)}}\ \Big)\le e^{-c_1(\nu_n+k)}\le e^{-c_1(n+k)}.
\end{align}
Combining \eqref{sto.dom} and \eqref{chernoff}, we obtain that
 \begin{align} \label{prob.B2}
 \P\left(\mathcal{A}_{i,\eps},L(\mathcal{N}_i, \xi_k^{(n)}) \le (\nu_n+k)^2+4(\nu_n+k) \ | \ \mathcal{F}_{\sigma_{k}^{(n)}} \cap A_k^{(n)} \right)   \le \exp\{- c_2 (n+k)\},
 \end{align}
 for some positive constant $c_2$.
 Using the fact that $x^2+4x> (x+1)^2$ for  all $x\ge 1$,   we thus obtain that for all $n\ge 1$ and for all $k\ge0$,
 $$
 \P\big(\mathcal{A}_{i,\eps}\cap A_k^{(n)},\sigma_{k+1}^{(n)}\ge \xi_k^{(n)}  \big) \le \exp\{- c_2 (n+k)\}.
 $$
On the event $\{\sigma_{k+1}^{(n)}<\xi_k^{(n)}\}$, we have that $$\Delta_k^{(n)}= L(i, \sigma_{k+1}^{(n)})-L(i, \sigma_k^{(n)}) \le 2q (\nu_n+k)(\gamma_k^{(n)})^{\alpha}.$$
Hence Claim \ref{claim1} is verified. 

Set 
$$D_k^{(n)}:=\Big\{\gamma_{k+1}^{(n)}\le \gamma_k^{(n)} \left(1-\frac{2}{1+\nu_n+k} \Big(1- q(\gamma_k^{(n)})^{\alpha-1}\Big)+\frac{1}{(1+\nu_n+k)^2}\right)\Big\}.$$
We next prove the following claim:
\begin{claim}\label{claim2}
On $\mathcal{A}_{i,\eps}$, the event $D_{k}^{(n)}$ holds for sufficiently large $n$ and all $k\ge 0$.
\end{claim}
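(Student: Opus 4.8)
The plan is to compute $\gamma_{k+1}^{(n)}$ in terms of $\gamma_k^{(n)}$ and the increment $\Delta_k^{(n)}$, using the definition $\gamma_k^{(n)} = L(i,\sigma_k^{(n)})/(\nu_n+k)^2$ together with the fact that $L(\mathcal N_i, \sigma_{k+1}^{(n)}) = (\nu_n+k+1)^2$. Concretely,
\[
\gamma_{k+1}^{(n)} = \frac{L(i,\sigma_{k+1}^{(n)})}{(\nu_n+k+1)^2} = \frac{L(i,\sigma_k^{(n)}) + \Delta_k^{(n)}}{(\nu_n+k+1)^2} = \frac{(\nu_n+k)^2 \gamma_k^{(n)} + \Delta_k^{(n)}}{(\nu_n+k+1)^2}.
\]
First I would expand $(\nu_n+k)^2/(\nu_n+k+1)^2 = 1 - 2/(1+\nu_n+k) + 1/(1+\nu_n+k)^2$, which produces exactly the algebraic shape appearing in the definition of $D_k^{(n)}$. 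So the claim reduces to showing that the contribution of $\Delta_k^{(n)}/(\nu_n+k+1)^2$ is dominated, on $\mathcal A_{i,h,\eps,m}$ and for large $n$, by the term $\frac{2\gamma_k^{(n)}}{1+\nu_n+k}(1+\eps_0)^5 h^{\alpha-1}(\gamma_k^{(n)})^{\alpha-1}$; i.e. that
\[
\frac{\Delta_k^{(n)}}{(\nu_n+k+1)^2} \le \frac{2(1+\eps_0)^5 h^{\alpha-1}(\gamma_k^{(n)})^{\alpha}}{1+\nu_n+k}.
\]
Since $(\nu_n+k+1)^2 \ge (\nu_n+k)(\nu_n+k+1) \ge (\nu_n+k)(1+\nu_n+k)$ up to harmless adjustments, this is essentially the bound $\Delta_k^{(n)} \le 2(1+\eps_0)^5 h^{\alpha-1}(\nu_n+k)(\gamma_k^{(n)})^{\alpha}$, which is precisely the complement of the event in Claim \ref{claim1}.

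The second step is to convert Claim \ref{claim1}, which is a summable probability bound, into an almost-sure eventual statement along the relevant sequence. The key observation is that $\mathcal A_{i,h,\eps,m} \subset B_{m,h} \cap A_k^{(n)}$ for every $n$ and every $k$: indeed $A_0^{(n)} \supseteq \mathcal A_{i,h,\eps,m}$ was noted before the claim, and one checks by induction on $k$ that $D_0^{(n)} \cap \cdots \cap D_{k-1}^{(n)}$ forces $\gamma_k^{(n)} \le 1/(h+\eps)$ — because the recursion for $\gamma$ is a contraction toward a fixed point strictly below $1/(h+\eps)$ once $\gamma_k^{(n)} < (h+\eps_0\text{-type bound})$, using $\eps_0 = \frac12((1+\eps/h)^{(\alpha-1)/5}-1)$ so that $(1+\eps_0)^5 h^{\alpha-1}(1/(h+\eps))^{\alpha-1} < 1$. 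Thus on $\mathcal A_{i,h,\eps,m}$ we stay in $A_k^{(n)}$, so the event in Claim \ref{claim1} coincides (on $\mathcal A_{i,h,\eps,m}$) with $(D_k^{(n)})^c$ up to the algebra above. Summing over $k\ge 0$ for fixed $n$, $\sum_k \exp\{-c_1(n+k)\} \le C e^{-c_1 n} < \infty$, and summing over $n$, $\sum_n C e^{-c_1 n} < \infty$, so Borel–Cantelli gives that almost surely, for $n$ large enough, $D_k^{(n)}$ holds for all $k\ge 0$; intersecting with $\mathcal A_{i,h,\eps,m}$ yields the claim.

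I expect the main obstacle to be the bookkeeping in the induction that keeps $\gamma_k^{(n)}$ below the threshold $1/(h+\eps)$ for all $k$ simultaneously: one must verify that the recursion $\gamma_{k+1}^{(n)} \le \gamma_k^{(n)}(1 - \frac{2}{1+\nu_n+k}(1 - (1+\eps_0)^5 h^{\alpha-1}(\gamma_k^{(n)})^{\alpha-1}) + \frac{1}{(1+\nu_n+k)^2})$ is genuinely decreasing (or at least non-increasing past the threshold), which requires the coefficient $1 - (1+\eps_0)^5 h^{\alpha-1}(\gamma_k^{(n)})^{\alpha-1}$ to stay bounded away from $0$ when $\gamma_k^{(n)} \le 1/(h+\eps)$, and the quadratic correction term $1/(1+\nu_n+k)^2$ to be absorbed for $n$ large. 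This is a deterministic calculus estimate, but one has to be careful that the "large $n$" threshold can be chosen uniformly in $k$ — which works because $\nu_n + k \ge n \to \infty$, so all the $k$-dependent smallness is controlled by $n$ alone. Once the induction is in place, everything else is the routine algebraic identity for $\gamma_{k+1}^{(n)}$ plus a double application of Borel–Cantelli, and the final conclusion $\P(\mathcal A_{i,h,\eps,m}) = 0$ — hence Proposition \ref{thm:local} after letting $\eps \downarrow 0$, $m \uparrow \infty$, and taking a union over $i$ and $h$ — follows from Claim \ref{claim2} by observing that under $D_k^{(n)}$ holding for all large $k$ the ratios $\gamma_k^{(n)}$ converge to a limit $\le 1/h$, contradicting $\gamma_k^{(n)} \le 1/(h+\eps)$ infinitely often implied by $\liminf_t L(i,t)/L(\mathcal N_i,t) \le 1/(h+2\eps)$.
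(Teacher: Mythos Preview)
Your proof of Claim~\ref{claim2} is correct and follows essentially the same route as the paper: the algebraic identity for $\gamma_{k+1}^{(n)}$ in terms of $\gamma_k^{(n)}$ and $\Delta_k^{(n)}$, the observation that the bound on $\Delta_k^{(n)}$ from Claim~\ref{claim1} forces $D_k^{(n)}$, the induction $A_k^{(n)}\cap D_k^{(n)}\subset A_{k+1}^{(n)}$ (via the choice of $\eps_0$), the union bound in $k$, and Borel--Cantelli in $n$ are exactly the paper's argument. One small expository wrinkle: your sentence ``$\mathcal A_{i,h,\eps,m}\subset B_{m,h}\cap A_k^{(n)}$ for every $n$ and every $k$'' is not literally true as stated---membership in $A_k^{(n)}$ only follows once $D_0^{(n)},\dots,D_{k-1}^{(n)}$ hold---but you immediately supply exactly that induction, so the logic is fine.

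However, your final paragraph, which goes beyond Claim~\ref{claim2} to sketch how $\P(\mathcal A_{i,h,\eps,m})=0$ follows, contains a genuine error. You write that $D_k^{(n)}$ holding for all $k$ implies $\gamma_k^{(n)}$ converges to a limit $\le 1/h$, and that this contradicts ``$\gamma_k^{(n)}\le 1/(h+\eps)$ infinitely often''. That is not a contradiction at all: a limit below $1/h$ is perfectly compatible with values below $1/(h+\eps)$ infinitely often. The paper's actual argument is different and sharper: applying Lemma~\ref{gamma.seq} to the recursion defining $D_k^{(n)}$ yields $\limsup_k (\nu_n+k)^2\gamma_k^{(n)}<\infty$, i.e.\ $L(i,\infty)<\infty$, which contradicts $i\in V^*$.
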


Indeed, we notice that on the event $\{ \Delta_k^{(n)}\le 2q (\nu_n+k)  (\gamma_k^{(n)})^{\alpha} \}$, we have
\begin{align}\label{inq.gamma}
\gamma_{k+1}^{(n)}(\nu_n+k+1)^2\le \gamma_{k}^{(n)}(\nu_n+k)^2 + 2q (\nu_n+k)  (\gamma_k^{(n)})^{\alpha}.
\end{align}
Dividing both sides of \eqref{inq.gamma} by $(\nu_n+k+1)^2$, this inequality implies that
\begin{align*}
    \gamma_{k+1}^{(n)}\le \gamma_k^{(n)} \left(1-\frac{2}{1+\nu_n+k} \Big(1- q(\gamma_k^{(n)})^{\alpha-1}\Big)+\frac{1}{(1+\nu_n+k)^2}\right).
\end{align*}
Hence, it follows from Claim \ref{claim1} that for all $n\ge 1$ and $k\ge 0$,
\begin{align}\label{Prob.D}
\P\Big( (D_{k}^{(n)})^c \cap A_k^{(n)}\cap \mathcal{A}_{i,\eps} \Big)\le \exp\{-c_1(n+k)\}. 
\end{align}
Recall that $A_{k}^{(n)}=\Big\{ \gamma_{k}^{(n)}\le \eps\}$. On $A_{k}^{(n)}\cap D_{k}^{(n)}$, we thus notice that 
$$
\begin{aligned}
\gamma_{k+1}^{(n)}&\le  \gamma_k^{(n)} \left(1-\frac{2}{1+\nu_n+k} \Big(1- q \eps^{\alpha-1}\Big)+\frac{1}{(1+\nu_n+k)^2}\right) \le \gamma_{k}^{(n)}\le \eps.
\end{aligned}
$$ 
Hence, for all $n\ge 1$ and $k\ge 0$, 
\begin{align}\label{D.incl}
    D_{k}^{(n)}\cap A_{k}^{(n)}\subset A_{k+1}^{(n)}.
\end{align}
Combining the fact that $\mathcal{A}_{i,\eps}\subset A_0^{(n)}$ with \eqref{D.incl} and \eqref{Prob.D}, we have that for $n\ge 1$ and $k\ge 0$,
\begin{align*}
\P\Big(\mathcal{A}_{i,\eps} \cap \bigcup_{k=0}^{\infty} (D_k^{(n)})^c  \Big)&\le  \sum_{k=1}^{\infty}\P\Big(\mathcal{A}_{i,\eps}\cap  \bigcap_{j=1}^{k-1} D_j^{(n)} \cap (D_{k}^{(n)})^c \Big)\\
&\le  \sum_{k=0}^{\infty}\P\Big(  (D_{k}^{(n)})^c \cap A_k^{(n)}\cap \mathcal{A}_{i,\eps} \Big) \le \sum_{k=1}^{\infty} \exp\{- c_1 (n+k)\}\le e^{-c_3 n},
\end{align*}
where $c_3$ is some positive constant. Using Borel-Cantelli lemma, we have that on $\mathcal{A}_{i,\eps}$, the event $D_{k}^{(n)}$ holds for sufficiently large $n$ and all $k\ge 0$.
Hence, Claim \ref{claim2} is verified.
 
Using Claim \ref{claim2} and applying Lemma \ref{gamma.seq} (see Appendix) to the sequence $(\gamma_{k}^{(n)})_{k\ge0}$ with $p=2$,  we infer that, on the event $\mathcal{A}_{i,\eps}$, for sufficiently large $n$, 
 $$L(i,\infty)=\limsup_{k\to\infty} (\nu_n+k)^2\gamma_{k}^{(n)} <\infty\quad\text{almost surely.}$$ 
 This however contradicts    $\mathcal{A}_{i,\eps}\subset \{L(i,\infty)=\infty\}$.
It immediately follows that $\P(\mathcal{A}_{i,\eps})=0$.
\end{proof}

The set $V^*$ is composed of connected components, which we refer to as \textbf{clusters}. By applying Proposition \ref{thm:local}, we obtain the following result that allow us to compare the local of a vertex with the local time of its corresponding  cluster. 

 \begin{corollary}\label{corol:dist1}
 If $V^*$ is composed of more than one cluster, then for any cluster $U$, we have almost surely $\delta(U, V^*\setminus U)=2$. Furthermore, there exists a positive deterministic constant $\gamma_0$, depending only on the graph $\mathcal{G}$ and the function $w$, such that for each cluster $U$ and for any vertex $i \in U$, we have, a.s.
$$\liminf_{t\to\infty}\frac{L(i,t)}{L(U,t)}\ge \gamma_0.$$
\end{corollary}

\begin{proof}
 We consider a pair of vertices $u \sim v$ such that $L(v,\infty) < \infty$ and $L(u,\infty) < \infty$. Assume that $v$ is visited infinitely often. There are only finitely many jumps from $v$ to $u$. In fact, on the event that $L(u,\infty) \le m$ for some $m > 0$, if there are infinitely many jumps from $v$ to $u$, then $L(v,\infty)$ stochastically dominates a sum of infinitely many i.i.d. exponential random variables with rate $m$, which contradicts $L(v,\infty) < \infty$. Hence, for each cluster $U$, we have $\delta(U, V^* \setminus U) = 2$ if $V^* \setminus U \neq 0$.
  
Recall that $\eps_0$ is the deterministic constant defined in Proposition \ref{thm:local}. Let $i$ be a vertex in cluster $U$. By Proposition \ref{thm:local}, 
  $$L(i,t)\ge \frac{\eps_0}{2}L(\mathcal{N}_i,t)\ge \frac{\eps_0}{2}L(j,t)$$
 for any $j \in U \cap \mathcal{N}_i$ and for sufficiently large $t$. By reiterating, we have $L(i,t) \ge \left(\frac{\eps_0}{2}\right)^{|V|} L(j,t)$ for any $j \in U$ and sufficiently large $t$. This fact immediately implies the result of the corollary.
\end{proof}

We next compare the local times of the clusters of $V^*$. For each time $t$, we order the clusters of $V^*$ according to their local times at time $t$, with $\Ccal_1(t)$ being the cluster with the largest local time and $\Ccal_2(t)$ the second largest, and so on. Let $J$ be the number of clusters of $V^*$. Note that $J$ is random but does not depend on $t$.
 
\begin{proposition}\label{dist2.lim} We have a.s.
$$\liminf_{t\to\infty}\frac{L(\mathcal{C}_J(t),t)}{L(\Ccal_{1}(t),t)}>0.$$
\end{proposition}

\begin{proof}
 Fix $\epsilon > 0$. We label the clusters of $V^*$, at time $t$, as either $(t, \eps)$-\textbf{good} or $(t, \eps)$-\textbf{bad}, as follows. The cluster $\Ccal_1(t)$ is always $(t, \eps)$-good. For $2\le i\le J$, the cluster $\Ccal_i(t)$ is $(t, \eps)$-good if $$L(\Ccal_{k}(t), t) \ge  \eps L(\Ccal_{k-1}(t), t) \quad\text{for each $k$ with $2\le k \le i$}.$$
Otherwise, the cluster $\Ccal_i(t)$ is $(t, \eps)$-bad. Note that if $\Ccal_i(t)$ is $(t, \eps)$-bad for some $2\le i\le J-1$, then $\Ccal_{i+1}(t)$ is also $(t, \eps)$-bad.
Let  $I(t,\epsilon)$ denote the number of $(t, \eps)$-good clusters. 
 
    
The idea of this proof is roughly described as follows. The case $J=1$ is trivial.   Assume that $J\ge2$  and reason by contradiction, by assuming 
$$\liminf_{t\to\infty}{L(\Ccal_J(t),t)}/{L(\Ccal_1(t),t)}=0.$$  
For each fixed $\eps > 0$, there must exist a $(\vartheta, \eps)$-bad cluster for some  $\vartheta>1/\eps$. As each $(\vartheta, \eps)$-good cluster has a much larger local time at time $\vartheta$ than the bad ones, it is unlikely for the process to ever reach any $(\vartheta, \eps)$-bad cluster again after time $\vartheta$. We show that the probability of this event occurring approaches zero as $\eps$ decreases to zero. This leading to a contradiction that the bad clusters could not be part of $V^*$.
We provide a formal proof below.

Fix positive integers $j$ and $k$ such that $1 \le k \le j-1$ and a collection of connected subsets  $\mathcal{U}=(U_i)_{1\le i\le j}$ of $V$ such that
   $\delta\big(U_i, \bigcup_{h\neq i} U_h)=2.$
Fix a small $\eps\in (0,1)$. Let $$\vartheta =\inf\Big\{t> \eps^{-1} : X_t\in \bigcup_{i=1}^k U_i \text{ and } L(U_{j},t)< {\eps}^{|V|} L(U_1,t)\Big\}.$$ 
Note that $\vartheta$ is a stopping time as $\mathcal{U},$ $\eps$ and $j$ are deterministic. 
  Recall that $\gamma_0$ is the deterministic constant defined in Corollary \ref{corol:dist1}. Let  $\mathcal{A}(\eps, j,k,m,\mathcal{U})$ be the event in which the following occurs:
\begin{itemize}
    \item $\vartheta<\infty$.
    \item $J=j$ and $I(\vartheta,\eps)=k$, i.e. there are exactly $k$ clusters which are $(\vartheta, \eps)$-good,  
    \item $\Ccal_i(\vartheta)=U_i$ for $1\le i \le j$,
    \item $L(V\setminus V^*,\infty)\le  m < L(\Ccal_j(\eps^{-1}), \eps^{-1})$,
    \item for each $u\in U_i$ and $1\le i\le j$, we have  
    $L(u,t) \ge \frac{\gamma_0}{2} L(U_i,t)$ for all $t\ge \eps^{-1}.$
\end{itemize}
Notice that on $\mathcal{A}(\eps, j,k,m,\mathcal{U})$, the process visits $U_i$ for each $1\le i\le j$ infinitely many times after time $\vartheta$, as they are clusters of $V^*$. 
We use $c_i$ with $i\in \N$ to denote positive constants depending only on the graph $\mathcal{G}$ and the function $w$. These constants  may vary from line to line. As we reason by contradiction, the main part of this proof is to demonstrate that  
\begin{align}\label{ineq.A}
   \P\big(  \mathcal{A}(\eps, j,k,m,\mathcal{U})\big)\le c_1 w(m)\eps^{\alpha-1}.
\end{align}
We emphasize that even though the event $\mathcal{A}(\eps, j,k,m,\mathcal{U})$ depends on the whole future of the process, we are able to establish \eqref{ineq.A} by using the strong construction of VRJP given in Section~\ref{Se:strongconstr} and Lemma~\ref{lem.sum}.
We divide the proof of \eqref{ineq.A} into three steps.

\textbf{Step 1:} In this step we compare the local times of the vertices that are incident to the outer boundary of  $\bigcup_{i=1}^k U_i$. Let $$\Upsilon :=\Big\{ v\in V : \delta\Big(v, \bigcup_{i=1}^k U_i\Big)=1\Big\}.$$ On $\mathcal{A}(\eps, j,k,m,\mathcal{U})$, we note that $L(\Upsilon,\infty)\le m$, and that each vertex $v\in \Upsilon$ is incident to at least a good cluster and either a bad cluster or a vertex with bounded local times.  Fix a vertex $v\in \Upsilon$. Let $\mathcal{N}_v^{(1)}$ and $\mathcal{N}_v^{(2)}$ be the sets of neighbours of $v$ which also belong to $\bigcup_{i=1}^{k} U_i$ and $V\setminus \bigcup_{i=1}^{k} U_i$ respectively.
Notice that on $\mathcal{A}(\eps, j,k,m,\mathcal{U})$, we have
\begin{align}\label{ratio}
\frac
{L(\mathcal{N}_v^{(2)},\vartheta)}{L(\mathcal{N}_v^{(1)},\vartheta)}\le 2\eps \gamma_0^{-1}|V|, 
\end{align}
in which we use the fact that on $\mathcal{A}(\eps, j,k,m,\mathcal{U})$,
\begin{align*}
    L(\mathcal{N}_v^{(2)},\vartheta)&\le |\mathcal{N}_v^{(2)}|\max_{u\in\mathcal{N}_v^{(2)}} L(u,\vartheta) \le  |V|\eps L(U_k,\vartheta)\quad\text{and}\\
      L(\mathcal{N}_v^{(1)},\vartheta)& \ge |\mathcal{N}_v^{(1)}| \min_{u\in\mathcal{N}_v^{(1)}} L(u,\vartheta)\ge \frac{\gamma_0}{2} L(U_k,\vartheta).
\end{align*} 

\textbf{Step 2:} In this step, we define an event that prevents the process from jumping into $\bigcup_{i=k+1}^j U_i$ after time $\vartheta$. Recall that $(\tau_n)_{n\ge 0}$ are the jumping times of the process. Let $$N(x,y,t)=\sum_{n: \tau_{n}<t} \1_{\{ X_{\tau_{n-1}}=x,X_{\tau_{n}}=y}\},$$ 
which stands for the number of jumps from $x$ to $y$ \underline{strictly before} time $t$. Let $\sigma_n(v,\vartheta)$ be  the $n$-th jumping time from $\mathcal
{N}_v^{(1)}$ to vertex $v$ after time $\vartheta$. Also let $\widetilde{\sigma}_n(v,\vartheta)$ be the $n$-th jumping time from vertex $v$ to $\mathcal{N}_v^{(1)}$  after time $\vartheta$. Set ${\sigma}_n(\vartheta)$ to be the $n$-th hitting time to $\Upsilon$ after time $\vartheta$. 
Recall that $(\chi_{j}^{e})_{j\in \N, e\in\vec{E}}$ is the collection of i.i.d. exponential random variables with parameter 1, which were used to generate the jumps  of the process $\X$ in Section \ref{Se:strongconstr}. For $v\in \Upsilon$, set
$$S_{n}(v):=\sum_{i = 1}^n \frac {\min_{u\in \mathcal{N}_v^{(1)}}\chi_{N(v, u,\widetilde{\sigma}_i(v,\vartheta))+1}^{(v, u)}}{w\Big(\frac{1}{|\mathcal{N}_v^{(1)}|}\big( L(\mathcal{N}_v^{(1)},\vartheta)+ w(m)^{-1}\sum_{h=1}^i\min_{u\in \mathcal{N}_v^{(1)}}\chi_{N(u,v,\sigma_h(v,\vartheta))+1}^{(u,v)}\big) \Big)},$$
and let $S_{\infty}(v)$ be the limit of $S_n(v)$ as $n\to\infty$. Set
$$
\Dcal(v) :=\Big\{ S_{\infty}(v) < \frac {\min_{u\in \mathcal{N}_v^{(2)}} \chi_{N(v,u,\vartheta)+1}^{(v,u)}}{w(L(\mathcal{N}_v^{(2)},\vartheta))}\Big\}\quad \text{and} \quad \mathcal{D} :=\bigcap_{v\in \Upsilon} \mathcal{D}(v). 
$$
Note that $\big( \min_{u\in \mathcal{N}_v^{(1)}}\chi_{N(v, u,\widetilde{\sigma}_i(v,\vartheta))+1}^{(v, u)}\big)_{i\in \N}$ and $\big(\min_{u\in \mathcal{N}_v^{(1)}}\chi_{N(u,v,\sigma_i(v,\vartheta))+1}^{(u,v)}\big)_{i\in \N}$ are i.i.d. exponential random variables with rate $|\mathcal{N}_v^{(1)}|$ . 
 
For each $n\in \N$, let $v_n:=X_{\sigma_n(\vartheta)}$, i.e., after time $\vartheta$, the process hits $\Upsilon$ for the $n$-th time at $v_n$.
We show by induction that for each $n\in\N$, on event $\mathcal{A}(\eps, j,k,m,\mathcal{U})\cap \Dcal$, the process jumps back from $v_n$ to $\mathcal{N}_{v_n}^{(1)}$ in the succeeding step after time $\sigma_n(\vartheta)$. This, however, contradicts the fact that the process visits each cluster infinitely many times. The contradiction immediately implies that, a.s.
\begin{align}\label{incl}
   \mathcal{A}(\eps, j,k,m,\mathcal{U})   \subset \Dcal^c.
\end{align} 
In Step 3 below, we will use \eqref{incl} to infer \eqref{ineq.A}. In the remainder of Step 2, we verify the aforementioned induction result to complete the proof of \eqref{incl}.

For each $v\in \Upsilon$ and $u\in \mathcal {N}_v^{(1)}$, we notice that $$N(v,u,\widetilde{\sigma}_1(v,\vartheta))=N(v,u,\sigma_1(v,\vartheta))=N(v,u,\vartheta),$$
as $N(v,u, t)$ keeps track of the jumps strictly before time $t$. 
Note also that, by the definition of $v_1$, one has  $\sigma_1(v_1,\vartheta)=\sigma_1(\vartheta)$. On $\mathcal{A}(\eps, j,k,m,\mathcal{U})$, using the fourth bullet in the definition of the latter event,  there exists a neighbour $u\in \mathcal{N}_{v_1}^{(1)}$ such that 
 \begin{align} \nonumber
      L(u, {\sigma}_{1}(v_1,\vartheta))
     &  \ge \frac{1}{|\mathcal{N}_{v_1}^{(1)}|}\big(L(\mathcal{N}_{v_1}^{(1)},\vartheta)+ w(m)^{-1}\min_{u\in \mathcal{N}_{v_1}^{(1)}}\chi_{N(u,v_1,\vartheta)+1}^{(u,v_1)}\big).
 \end{align}
Hence on $\mathcal{A}(\eps, j,k,m,\mathcal{U})\cap \Dcal$, we have 
$$\min_{u\in \mathcal{N}_{v_1}^{(1)}}\frac{\chi_{N(v_1, u,{\sigma}_1(v_1,\vartheta))+1}^{(v_1, u)}}{w\big( L(u, {\sigma}_{1}(v_1,\vartheta)) \big)}\le S_1(v_1)< \frac {\min_{u\in \mathcal{N}_{v_1}^{(2)}} \chi_{N(v_1,u,\vartheta)+1}^{(v_1,u)}}{w(L(\mathcal{N}_{v_1}^{(2)},\vartheta))},$$
and the process thus jumps back from $v_1$ to $\mathcal{N}_{v_1}^{(1)}$ at the succeeding jumping time after time $\sigma_1(v_1,\vartheta)$.  
Assume that for some $n\in \N$, on $\mathcal{A}(\eps, j,k,m,\mathcal{U})\cap \Dcal$, the process jumps back from $v_i$ to $\mathcal{N}_{v_i}^{(1)}$ in the succeeding step after time $\sigma_i(\vartheta)$ for each $1\le i\le n-1$.
Note that $\sigma_n(\vartheta)=\sigma_r(v_n,\vartheta)$ for some $1\le r\le n$. Then on $\mathcal{A}(\eps, j,k,m,\mathcal{U})\cap \Dcal$, for each $1\le i\le r$, we have that $N({v_{n}},u,\sigma_i(v_n,\vartheta))=N({v_{n}},u,\widetilde{\sigma}_i(v_n,\vartheta))$  and there exists a neighbour $u\in \mathcal{N}_{v_n}^{(1)}$ such that 
\begin{align} \nonumber
     L(u, {\sigma}_{i}(v_{n},\vartheta))
     &  \ge \frac{1}{|\mathcal{N}_{v_{n}}^{(1)}|}\big(L(\mathcal{N}_{v_{n}}^{(1)},\vartheta)+ w(m)^{-1}\sum_{h=1}^{i}\min_{u\in \mathcal{N}_{v_{n}}^{(1)}}\chi_{N(u,{v_{n}},\sigma_h(v_n,\vartheta))+1}^{(u,v_n)}\big).
 \end{align}
 Hence on $\mathcal{A}(\eps, j,k,m,\mathcal{U})\cap \Dcal$,
$$\sum_{i=1}^r\min_{u\in \mathcal{N}_{v_n}^{(1)}}\frac{\chi_{N(v_n, u,{\sigma}_i(v_n,\vartheta))+1}^{(v_n, u)}}{w\big( L(u, {\sigma}_{i}(v_n,\vartheta)) \big)}\le S_r(v_n)< \frac {\min_{u\in \mathcal{N}_{v_n}^{(2)}} \chi_{N(v_n,u,\vartheta)+1}^{(v_n,u)}}{w(L(\mathcal{N}_{v_n}^{(2)},\vartheta))},$$
and the process thus jumps back from $v_n$ to $\mathcal{N}_{v_n}^{(1)}$ at the succeeding jumping time after time $\sigma_r(v_n,\vartheta)$.  
By induction, on $\mathcal{A}(\eps, j,k,m,\mathcal{U})\cap \Dcal$, after time $\vartheta$ the process never jumps to a $(\vartheta,\eps)$-bad cluster almost surely, which is a contradiction. This immediately implies \eqref{incl}.

\textbf{Step 3:} In this step, we give a lower bound for the probability of $\Dcal(v)$ for each $v\in \Upsilon$ and complete the proof of \eqref{ineq.A}. In virtue of Lemma \ref{lem.sum} in the Appendix, we notice that on the event $\{\vartheta<\infty\}$, the limit $S_{\infty}(v)$ is finite a.s. and that
\begin{align}\nonumber
    &\P( \Dcal (v) \mid \mathcal{F}_\vartheta )=\E \Big[\exp\Big( -S_{\infty}(v)w(L(\mathcal{N}_v^{(2)},\vartheta))|\mathcal{N}_v^{(2)}| \Big) \mid \mathcal{F}_\vartheta\Big] \\\nonumber
    & = \exp\left(-w(m) |\mathcal{N}_v^{(1)}|\cdot|\mathcal{N}_v^{(2)}|w(L(\mathcal{N}_v^{(2)},\vartheta))\int_{\frac{L(\mathcal{N}_v^{(1)},\vartheta)}{|\mathcal{N}_v^{(1)}|}}^{\infty}\frac{\rmd x}{w(x)+w(L(\mathcal{N}_v^{(2)},\vartheta))\frac{|\mathcal{N}_v^{(2)}|}{|\mathcal{N}_v^{(1)}|}} \right)\\ \nonumber
    & \ge \exp\left(-w(m) |\mathcal{N}_v^{(1)}|\cdot|\mathcal{N}_v^{(2)}| \int_{\frac{L(\mathcal{N}_v^{(1)},\vartheta)}{|\mathcal{N}_v^{(1)}|}}^{\infty}\frac{\rmd x}{c_2\cdot\big(x/L(\mathcal{N}_v^{(2)},\vartheta)\big)^{\alpha}+\frac{|\mathcal{N}_v^{(2)}|}{|\mathcal{N}_v^{(1)}|}} \right)\\ \nonumber
    & \ge \exp\left(- c_3w(m)\Big({L(\mathcal{N}_v^{(2)},\vartheta)}/{L(\mathcal{N}_v^{(1)},\vartheta})\Big)^{\alpha-1} \right)\\
   \label{bound1} & \ge  1- c_4w(m)\Big({L(\mathcal{N}_v^{(2)},\vartheta)}/{L(\mathcal{N}_v^{(1)},\vartheta)}\Big)^{\alpha-1},
     \end{align}
    where in first inequality we use the assumption that there exists a constant $C>1$ such that
    $C^{-1}t^{\alpha}\le w(t)\le C t^{\alpha}$ for all $t\ge \ell_*$.
Combining \eqref{ratio}, \eqref{incl} and \eqref{bound1}, we obtain \begin{align*}
    \P(\mathcal{A}(\eps, j,k,m,\mathcal{U}))&\le \sum_{v\in \Upsilon}\P\Big(\mathcal{D}(v)^c\cap 
\Big\{\vartheta<\infty \text{ and 
 } L(\mathcal{N}_v^{(2)},\vartheta)\le 2\eps \gamma_0^{-1} |V| L(\mathcal{N}_v^{(1)},\vartheta)  \Big\}\Big)\\
&\le  c_1 w(m)\eps^{\alpha-1}.
\end{align*}
Hence \eqref{ineq.A} is verified. 

Set 
\begin{align*}
    \widetilde{\mathcal{A}}(\eps,m)&=\Big\{ L(u,t)\ge \frac{\gamma_0}{2} L(\Ccal_j(t),t) \text{ for each } u\in \Ccal_j(t), 1\le j \le J \text{ and for all } t\ge \eps^{-1}\Big\}\\
    &\cap \Big\{L(V\setminus V^*,\infty)\le m < L(\Ccal_J(\eps^{-1}),\eps^{-1}), \ \liminf_{t\to\infty} \frac{L(\Ccal_J(t),t)}{L(\Ccal_1(t),t)}\le \frac{1}{2}{\eps^{|V|}}\Big\}.
\end{align*}
Notice that $\widetilde{\mathcal{A}}(\eps,m) \subset \bigcup_{\mathcal U, j}\bigcup_{1\le k\le j-1}\mathcal{A}(\eps, j,k,m,\mathcal{U}),$ where we take the union for all possible values of $\mathcal{U}$ and $j$ with $2\le j\le |V|$. The number of possible values of $j$ and $\mathcal{U}$ is finite, as we  are considering VRJP on a finite graph. It follows from \eqref{ineq.A} that
$$\P\big(\widetilde{\mathcal{A}}(\eps,m)\big)\le c_5w(m) \eps^{\alpha-1}.$$
In virtue of Corollary \ref{corol:dist1}, for sufficiently large $t$, $L(u,t)\ge \frac{\gamma_0}{2} L(U,t)$ almost surely for any cluster $U$ and $u\in U$. Taking $\eps\to 0$, by the continuity of probability measure, we infer that the event
$$\Big\{\liminf_{t\to\infty} L(\Ccal_J(t),t)/L(\Ccal_1(t),t)=0\Big\}\cap \{L(V\setminus V^*,\infty)\le m \}$$
 occurs with probability zero for any $m>0$. Hence $\liminf_{t\to\infty} L(\Ccal_J(t),t)/L(\Ccal_1(t),t)>0$ almost surely. This ends the proof of the proposition.
\end{proof}

We combine the above results to demonstrate Theorem \ref{localization}.\\

\begin{proof}[Proof of Theorem \ref{localization}]
Recall from Corollary \ref{corol:dist1} that if $V^*$  consists of more than one connected component, then for each connected component $U$, we have a.s.
$$\delta(U, V^*\setminus U)=2.$$ 
In virtue of Proposition \ref{dist2.lim},
for each connected component $U$ of $V^*$, we have a.s.
$$\liminf_{t\to\infty}\frac{L(U,t)}{L(\Ccal_1(t),t)}>0,$$
where we recall that $\Ccal_1(t)$ is the connected component of $V^*$ having the largest local time at time $t$.
It follows that a.s.
\begin{align}\label{liminf}
    \liminf_{t\to\infty} \frac{L(U,t)}{t}\ge  \liminf_{t\to\infty}\frac{L(U,t)}{L(\Ccal_1(t),t)}\cdot \liminf_{t\to\infty} \frac{L(\Ccal_1(t),t)}{t} >0.
    \end{align}
Combining \eqref{liminf} and Corollary \ref{corol:dist1}, for each vertex $i\in V^*$ which is an element of a connected component $U$, we have a.s.
$$\liminf_{t\to\infty}\frac{L(i,t)}{t}\ge   \liminf_{t\to\infty}\frac{L(i,t)}{L(U,t)}\cdot \liminf_{t\to\infty} \frac{L(U,t)}{t}>0.$$
If $V^*$ has more than one connected component, then there exist $i, j\in V^*$ such that $\delta(i,j)=2$ and
$$\liminf_{t\to\infty}\frac{L(i,t)\wedge L(j,t)}{t}>0,$$ 
which contradicts Proposition \ref{thm2}.
Therefore, $V^*$ is a connected subset of $V$. Furthermore, if $|V^*|\ge 2$, then there exist $i, j\in V^*$ such that $i\sim j$ and $$\liminf_{t\to\infty}\frac{L(i,t)\wedge L(j,t)}{t}>0.$$ This cannot occur, as it contradicts Proposition \ref{thm1}. The  contradiction thus implies the result of Theorem \ref{localization}.
\end{proof}

We now turn to the proof of our main theorem.
 
\begin{proof}[Proof of Theorem \ref{thm:main}(b)]
Let $\Xi=\{\exists! v\in \mathcal{G} : L(v,\infty)=\infty\}$. Recall the definition of  $V^{\ssup 0}_{k}$ and $V^{\ssup 1}_{k}$ from \eqref{def.V}. 
Let $\mathcal{G}_k$ be the induced subgraph of $\mathcal{G}$ corresponding to the vertex set $V_{3k}^{\ssup 0}$. Denote by $\vec{E}_k$ the set of all induced directed edges of $\mathcal{G}_k$. Set $\widetilde{\ell}=(\ell_v)_{v\in V_{3k}^{\ssup 0}}$.
Recall that $(\chi^{e}_j)_{j\in \N, e\in \vec{E}}$ is the collection of exponential random variables using in the construction of the process $\X=(X_t)_{t\in \R_+}$ mentioned in Section \ref{Se:strongconstr}. 
Let $\widetilde{\X}=(\widetilde{X}_t)_{t\in \R_+}$ be the $\text{VRJP}(\widetilde{\ell}, w)$ on $\Gcal_k$, which is constructed by using the collection $(\chi^{e}_j)_{j\in \N, e\in \vec{E}_k}$. Note that $X_t=\widetilde{X}_{t}$ for all $0\le t< \widehat{T}_{3k}$, where $\widehat{T}_{3k}$ is the first time when the process $\X$ hits  $(V_{3k}^{\ssup 0})^c$.
Hence, on the event $\Xi^c\cap \{\widehat{T}_{3k}=\infty\}$, $V_{3k}^{\ssup 0}$ contains more than one vertex with unbounded local time corresponding to $\widetilde{\X}$. Applying Theorem \ref{localization} to the process $\widetilde\X$, we infer that \begin{align}\label{prob.inq1}\P(\Xi^c\cap \{\widehat{T}_{3k}=\infty\})=0.\end{align}
Recall that $T_{k}$ is the first hitting time to $V^{\ssup 1}_{k}$. In virtue of Theorem \ref{them:loc}, we have that for each $k\ge 2 $, \begin{align}\label{prob.inq2}
    \P(\widehat{T}_{3k}=\infty)\ge \P(T_{3(k-1)}=\infty) \ge 1- \gamma^{k-1}
    \end{align}
    with some $\gamma\in (0,1)$. 
Combining  \eqref{prob.inq1} and \eqref{prob.inq2}, we have $$\P(\Xi\cap \{\widehat{T}_{3k}=\infty\})=\P(\widehat{T}_{3k}=\infty)\ge 1- \gamma^{k-1}.$$ Taking $k\to\infty$ and using the continuity of the probability measure, we conclude that $\P(\Xi)=1$.

%
\end{proof}

\section{Appendix}

\subsection{Matrix-tree theorem for weighted directed graphs}\label{sec:mat-tree}
Let $G=(V,\vec{E},w)$ be a weighted directed graph, where $V=\{1,2,\dots, n\}$ is the set of vertices, $\vec{E}$ is the set of  directed edges and $w:\vec{E}\to (0,\infty)$ is a weight function. We assume that $G$ is connected and loopless.

Let $L=(L_{ij})_{1\le i, j\le n}$ be the outgoing Laplacian matrix of $G$, i.e.
$$
L_{ij}=\left\{\begin{matrix}
- w(i,j)& \text{if  $(i,j)\in \vec{E}$}, \\
\displaystyle\sum_{k\; :\; (i,k)\in \vec{E}}w(i,k) & \text{if $i=j$},\\
0 & \text{otherwise.}
\end{matrix}\right.$$

For each $j\in V$, we say that a directed subgraph $T=(V_T,\vec{E}_T,w)$ is
a weighted incoming directed spanning tree rooted at vertex $j$ if $\vec{E}_T$ is a minimal subset of $\vec{E}$ such that $V_T=V$ and the direction of each edge is always toward to the root $j$. Similarly, one can also define a weighted incoming directed spanning forest rooted at certain vertices in $V$. 

Denote by $\vec{\mathfrak{T}}_j$ the set of all weighted incoming directed spanning trees rooted at $j$.
For $i, j, k \in V$ and $k\notin\{i,j\}$, we denote by $\vec{\mathfrak{T}}_{ij,k}$ the set of all weighted incoming directed spanning forest which consists of two trees rooted at $i$ and $k$ such that the first one must contain $j$. We use the convention that $\vec{\mathfrak{T}}_{ij,k}=\emptyset$ if $k\in\{i,j\}$.

The weight $w(H)$ of a weighted
directed graph $H=(V_H, \vec{E}_H, w)$ is defined as the product of the weights of its directed edges, i.e.
$w(H)=\prod_{e\in \vec{E}_H}w(e).$

\begin{proposition}[Matrix-tree theorem for weighted directed graphs]\label{mat-tree.thm}
Let $L(i,j)\in \mathbb{R}^{(n-1)\times (n-1)}$ be the matrix obtained from $L$ by deleting the $i$-th row and the $j$-th column. Then 
$$\det(L(i,j))=(-1)^{i+j}\sum_{T\in \vec{\mathfrak{T}}_i} w(T).$$
Furthermore, for each $k\neq i$ and $h\neq j$, the $(k,h)$-cofactor of the matrix $L(i,j)$ is given by
$$C_{k,h}(i,j)=(-1)^{i+j}\left(
\sum_{F\in \vec{\mathfrak{T}}_{kh,i}}w(F)-\sum_{F\in \vec{\mathfrak{T}}_{kj,i}}w(F)\right).$$
\end{proposition}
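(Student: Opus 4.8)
The plan is to prove both identities by the classical multilinear (row) expansion of a determinant of a submatrix of the out-coming Laplacian $L$, combined with a careful accounting of signs. I would begin with the principal-minor case $\det L(i,i)=\sum_{T\in\mathfrak T_i}w(T)$. Each row $v\neq i$ of $L$ can be written as $L_{v,\cdot}=\sum_{k\,:\,(v,k)\in E}w(v,k)\,(\mathbf e_v-\mathbf e_k)$, since the diagonal entry $L_{vv}$ is exactly the sum of the out-weights of $v$. Expanding $\det L(i,i)$ multilinearly in these rows gives $\sum_{\phi}\big(\prod_{v\neq i}w(v,\phi(v))\big)\det M_\phi$, where $\phi$ ranges over the maps sending each $v\neq i$ to an out-neighbour $\phi(v)$, and $M_\phi$ is the $(n-1)\times(n-1)$ matrix whose $v$-th row is $(\mathbf e_v-\mathbf e_{\phi(v)})$ restricted to the coordinates $V\setminus\{i\}$. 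I would then show $\det M_\phi\in\{0,1\}$: if the functional digraph $\{v\to\phi(v)\}$ contains a directed cycle (which necessarily avoids $i$, as $i$ is not in the domain of $\phi$), the rows of $M_\phi$ along that cycle telescope to $0$, so $\det M_\phi=0$; otherwise, iterating $\phi$ from any vertex must reach $i$, so $\{v\to\phi(v)\}$ together with the root $i$ is a spanning in-tree $T_\phi\in\mathfrak T_i$, and in the Leibniz expansion of $\det M_\phi$ a nonzero term would force a permutation $\sigma$ with $\sigma(v)\in\{v,\phi(v)\}$ for all $v$, hence $\sigma$ coinciding with $\phi$ on its non-fixed set and so $\phi$ having a cycle there — impossible; thus only $\sigma=\mathrm{id}$ survives and $\det M_\phi=1$. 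Summing over the $\phi$ with $\det M_\phi=1$ yields $\sum_{T\in\mathfrak T_i}w(T)$.

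For $\det L(i,j)$ with $j\neq i$, I would use that $L\mathbf 1^{\textsf{T}}=0$, so the matrix obtained from $L$ by deleting row $i$ has all its columns summing to the zero vector. Hence inside $L(i,j)$ one may replace the column indexed by $i$ with minus the sum of the remaining columns of that row-deleted matrix and then discard the columns indexed by $V\setminus\{i,j\}$ (which are still present and so leave the determinant unchanged); this exhibits $L(i,j)$, up to one sign change and a reordering of columns, as $L(i,i)$. Tracking the sign of the permutation that restores the natural column order produces the factor $(-1)^{i+j}$, which I would double-check on the two-vertex graph carrying a single out-edge in each direction.

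For the cofactor formula, the $(k,h)$-cofactor of $L(i,j)$ equals, up to its position sign, the determinant of the matrix obtained from $L$ by deleting rows $\{i,k\}$ and columns $\{j,h\}$, and I would run the same multilinear expansion on it. Now $\phi$ is defined on $V\setminus\{i,k\}$ and the associated $0/\pm 1$ matrix has vanishing determinant unless $\{v\to\phi(v)\}$, together with the roots $i$ and $k$, is a spanning in-forest with two components. A short bookkeeping of which columns supply the required nonzero entries shows that exactly two ``patterns'' contribute: the one in which $j$ lies in the $i$-component and $h$ in the $k$-component (with sign $+$), and the one in which $h$ lies in the $i$-component and $j$ in the $k$-component (with sign $-$). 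Writing $\mathfrak T_{kh,i}$ as the disjoint union of the forests whose $k$-tree contains both $h$ and $j$ and those whose $k$-tree contains $h$ while the $i$-tree contains $j$, and doing the same for $\mathfrak T_{kj,i}$, the forests having both $h$ and $j$ in the $k$-tree cancel between the two patterns, leaving $\sum_{F\in\mathfrak T_{kh,i}}w(F)-\sum_{F\in\mathfrak T_{kj,i}}w(F)$; combining this with the position sign and the $(-1)^{i+j}$ from the column-replacement step gives the stated formula.

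I expect the main obstacle to be the sign bookkeeping rather than the combinatorics: verifying that $\det M_\phi$ is $+1$ (not $-1$) whenever the functional digraph is a forest, tracking the column-permutation sign in the reduction of $L(i,j)$ to $L(i,i)$, and — most delicately — pinning down the sign of each of the two matching patterns in the two-deletion case and confirming that the convention ``deleted row $=$ root of a component, deleted column $=$ a prescribed member of a component'' is the one that matches the statement. The telescoping-cycle vanishing, the leaf-peeling evaluation, and the set identities for the forest families are all routine.
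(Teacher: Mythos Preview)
The paper does not prove this proposition: in the appendix it is stated and followed only by ``See \cite{Tutte1984} for more details.'' There is therefore no proof in the paper to compare against, and your proposal already goes beyond what the paper supplies.

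On its own merits your sketch is the standard route and is correct in outline. The principal-minor identity $\det L(i,i)=\sum_{T\in\mathfrak T_i}w(T)$ via multilinear row expansion and the functional-digraph argument (cycles telescope to zero; in the acyclic case only $\sigma=\mathrm{id}$ survives, giving $\det M_\phi=1$) is classical. The reduction of $\det L(i,j)$ to $\det L(i,i)$ by adding all columns of the row-deleted matrix into column~$i$ (using $L\mathbf 1^{\textsf T}=0$) and then reordering is also standard and does produce $(-1)^{i+j}$. For the cofactor formula your claim that exactly the two ``cross'' configurations contribute with opposite signs --- the forest with $h$ in the $k$-tree and $j$ in the $i$-tree versus the forest with $j$ in the $k$-tree and $h$ in the $i$-tree --- is precisely the two-root case of the all-minors matrix-tree theorem; forests with both $h$ and $j$ in the same component do not appear in the expansion at all, and your add-and-subtract identity recovering $\sum_{\mathfrak T_{kh,i}}w-\sum_{\mathfrak T_{kj,i}}w$ from those two signed sums is correct. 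The only genuine work left is what you already flag as the obstacle: the sign bookkeeping in the two-deletion case. There is no conceptual gap.
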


See Section VI.5 and Section VI.6 in \cite{Tutte1984} for more details. 

\subsection{Other useful results}
\[\]

\begin{lemma}\label{lim.w}Assume that $\int_{c}^{\infty}\frac{\rmd u}{w(u)}<\infty$ for some $c>0$. Then for any $p>q\ge 1$, we have
    \begin{align*}
\lim_{t\to\infty}w(t)^p\int_t^{\infty}\frac{\rmd u}{w(u)^q}=\infty.
\end{align*}
\end{lemma}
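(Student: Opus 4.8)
The plan is to bound the tail integral from below by just its contribution over the doubling window $[t,2t]$, on which $w$ is comparable to $w(t)$.

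First I would record the doubling property of $w$. In every place where this lemma is applied, $w$ is increasing and the regularity condition \eqref{lower.regu} is in force; taking $C=\tfrac12$ there gives $\liminf_{s\to\infty} w(s/2)/w(s)>0$, so there are a constant $\lambda\ge 1$ and a threshold $T_0$ with
\[
w(2t)\le \lambda\, w(t)\qquad\text{for all }t\ge T_0 .
\]
I would also observe that, since $w$ is increasing and $\int_c^\infty \rmd u/w(u)<\infty$, the function $w$ must be unbounded, hence $w(t)\to\infty$; in particular $\int_t^\infty \rmd u/w(u)^q<\infty$ for every $q\ge 1$, so the quantity in the statement is well defined.

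Next I would fix $t\ge T_0$ and exploit monotonicity: $w(u)\le w(2t)\le \lambda\, w(t)$ for every $u\in[t,2t]$, and therefore
\[
\int_t^\infty\frac{\rmd u}{w(u)^q}\ \ge\ \int_t^{2t}\frac{\rmd u}{w(u)^q}\ \ge\ \frac{t}{\big(\lambda\, w(t)\big)^q}.
\]
Multiplying by $w(t)^p$ gives $w(t)^p\int_t^\infty \rmd u/w(u)^q\ \ge\ \lambda^{-q}\, t\, w(t)^{p-q}$. Since $p>q$ and $w(t)\to\infty$ we have $w(t)^{p-q}\to\infty$, so the right‑hand side diverges as $t\to\infty$, which is exactly the claim.

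The only delicate point is the comparability $w(2t)\asymp w(t)$ used in the first step: monotonicity together with integrability of $1/w$ does not suffice on its own (a function that is very steep along a sequence of short intervals is a counterexample), so the doubling‑type regularity \eqref{lower.regu} — which holds throughout the sections where the lemma is used — is what makes the argument go through, and I would make that dependence explicit in the statement or proof.
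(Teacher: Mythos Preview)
Your argument is correct. The paper does not give its own proof here; it simply defers to Proposition~5.9 of \cite{CNV2022}, so there is no in-paper argument to compare yours against. Under the standing hypotheses of the subsection where the lemma is actually invoked --- $w$ increasing (Assumption~\ref{assump}) together with the regularity condition \eqref{lower.regu} --- your doubling-window estimate is clean and sufficient: restrict the tail integral to $[t,2t]$, use $w(u)\le w(2t)\le\lambda\,w(t)$ there, and conclude via $t\,w(t)^{p-q}\to\infty$.

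Your closing observation is also right and worth emphasising: the hypothesis $\int_c^\infty \rmd u/w(u)<\infty$ alone does not suffice, even together with monotonicity. A continuous increasing $w$ with $w\approx 2^{2^n}$ on most of $[n,n+1)$ satisfies the integrability condition, yet along $t_n\uparrow n+1$ one gets $w(t_n)^p\int_{t_n}^\infty \rmd u/w(u)^q\asymp 2^{(p-2q)2^n}\to 0$ whenever $q<p<2q$. So a doubling-type assumption such as \eqref{lower.regu} is genuinely needed, and making that dependence explicit, as you suggest, would sharpen the stated lemma.
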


See the proof of Proposition 5.9 in \cite{CNV2022}.

\begin{lemma}\label{MatExp}
Let $A=\begin{pmatrix}A_{11} & A_{12}\\0 & A_{22}\end{pmatrix}$, where $A_{11}$ and $A_{22}$ be square blocks. We have that $$e^{tA}=\begin{pmatrix}e^{tA_{11}} & B(t)\\0 & e^{tA_{22}}\end{pmatrix},$$ where
$$B(t)=\int_0^te^{(t-s)A_{11}}\cdot A_{12}\cdot e^{sA_{22}}\rmd s.$$
In particular, when $A_{11}=0$ and $A_{22}$ is non-singular, 
$$e^{tA}=\begin{pmatrix}I_1 & A_{12}\cdot (e^{tA_{22}}-I_2)\cdot A_{22}^{-1} \\0 & e^{tA_{22}}\end{pmatrix},$$
where $I_1$ and $I_2$ are identity matrices. 
\end{lemma}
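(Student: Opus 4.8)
The plan is to prove Lemma~\ref{MatExp} by the classical device of identifying $e^{tA}$ as the unique solution of a linear matrix initial value problem. Concretely, I would set
$$\Phi(t) := \begin{pmatrix}e^{tA_{11}} & B(t)\\ 0 & e^{tA_{22}}\end{pmatrix}, \qquad B(t) := \int_0^t e^{(t-s)A_{11}}\,A_{12}\,e^{sA_{22}}\,\rmd s,$$
and verify that $\Phi$ solves $\Phi'(t) = A\,\Phi(t)$ with $\Phi(0) = I$. Since $t\mapsto e^{tA}$ is the unique solution of this problem, it then follows that $\Phi(t) = e^{tA}$, which is exactly the first assertion.

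First I would check the initial condition: $B(0) = 0$ and $e^{0\cdot A_{ii}} = I_i$, so $\Phi(0) = I$. Next I would differentiate. The diagonal blocks are immediate, since $\frac{\rmd}{\rmd t}e^{tA_{ii}} = A_{ii}e^{tA_{ii}}$. For the off-diagonal block, differentiating under the integral sign (legitimate because the integrand is jointly continuous and smooth in $t$) and applying the Leibniz rule for the variable upper limit gives
$$B'(t) = A_{12}\,e^{tA_{22}} + \int_0^t A_{11}\,e^{(t-s)A_{11}}\,A_{12}\,e^{sA_{22}}\,\rmd s = A_{12}\,e^{tA_{22}} + A_{11}\,B(t).$$
On the other hand, multiplying out the block product shows that $A\,\Phi(t)$ has diagonal blocks $A_{11}e^{tA_{11}}$ and $A_{22}e^{tA_{22}}$ and upper-right block $A_{11}B(t) + A_{12}e^{tA_{22}}$. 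Hence $\Phi'(t) = A\,\Phi(t)$, as required.

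For the special case $A_{11} = 0$, we have $e^{tA_{11}} = I_1$, so the integral collapses to $B(t) = A_{12}\int_0^t e^{sA_{22}}\,\rmd s$; since $A_{22}$ is invertible and commutes with every $e^{sA_{22}}$, one has $\int_0^t e^{sA_{22}}\,\rmd s = (e^{tA_{22}} - I_2)A_{22}^{-1}$, giving the stated closed form. As an alternative to the ODE argument, one could instead compute $A^n$ by induction---it has block form with upper-right entry $\sum_{k=0}^{n-1}A_{11}^k A_{12}A_{22}^{\,n-1-k}$---and sum the exponential series term by term, using the Beta-integral identity $\int_0^t \frac{(t-s)^j s^l}{j!\,l!}\,\rmd s = \frac{t^{j+l+1}}{(j+l+1)!}$ to recognize the resulting off-diagonal block as $B(t)$. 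There is no genuine obstacle here; the only points meriting a word of care are the interchange of differentiation and integration and the invocation of uniqueness for linear ODEs, both entirely standard.
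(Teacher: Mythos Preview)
Your proof is correct and takes essentially the same approach as the paper: both identify the block form of $e^{tA}$ via the linear ODE $\Phi'(t)=A\Phi(t)$, $\Phi(0)=I$, and check that the upper-right block satisfies $B'(t)=A_{11}B(t)+A_{12}e^{tA_{22}}$ with $B(0)=0$. Your write-up is in fact a bit more self-contained, since you define $\Phi$ directly and invoke uniqueness, whereas the paper first assumes the block structure of $e^{tA}$ and then reads off the ODE for $B$.
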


\begin{proof}
 Differentiating $e^{tA}$ w.r.t. $t$, we obtain that 
$$
\begin{pmatrix}A_{11}e^{tA_{11}}&B'(t)\\ 0 & A_{22}e^{tA_{22}}\end{pmatrix}=\frac{\rmd}{\rmd t}e^{tA}=Ae^{tA}=\begin{pmatrix}A_{11}&A_{12}\\ 0&A_{22}\end{pmatrix}\begin{pmatrix} e^{tA_{11}}&B(t)\\ 0&e^{tA_{22}}\end{pmatrix}.
$$
Therefore
$B(t)$ is the solution to the initial value problem: $B(0)=0$, and
$$
B'(t)=A_{11}F(t)+A_{12}e^{tA_{22}}.
$$
\end{proof}

\begin{lemma}[Sherman-Morrison formula]\label{SM.formula}
Suppose ${A\in \mathbb {R} ^{n\times n}}$ is a non-singular square matrix and ${ u,v\in \mathbb {R} ^{n}}$ are column vectors. Then
\begin{align*} \det \left( {A} + {uv} ^{\textsf {T}}\right)&=\left(1+ {v} ^{\textsf {T}} {A} ^{-1} {u} \right)\,\det \left( {A} \right). \end{align*}
Furthermore, if $1+v^{\textsf {T}}A^{-1}u\neq 0$ then 
\begin{align*}
 \left(A+uv^{\textsf {T}}\right)^{-1}&=A^{-1}-\frac{A^{-1}uv^{\textsf {T}}A^{-1}}{1+v^{\textsf {T}}A^{-1}u}.\end{align*}
\end{lemma}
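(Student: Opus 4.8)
The plan is to establish the two identities in turn, proving the determinant formula first, since it certifies that the scalar $1+v^{\textsf T}A^{-1}u$ vanishes precisely when $A+uv^{\textsf T}$ fails to be invertible, and only then verifying the inverse formula by a direct multiplication.

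For the determinant identity, I would factor $A+uv^{\textsf T}=A\,(I+A^{-1}u\,v^{\textsf T})$, so that by multiplicativity of the determinant it suffices to prove the matrix determinant lemma $\det(I+xy^{\textsf T})=1+y^{\textsf T}x$ for arbitrary column vectors $x,y\in\mathbb R^{n}$, applied here with $x=A^{-1}u$ and $y=v$. This lemma I would obtain from a Schur-complement computation applied to the $(n+1)\times(n+1)$ block matrix $M=\begin{pmatrix} I_n & x\\ -y^{\textsf T} & 1\end{pmatrix}$: taking the Schur complement of the $(1,1)$ block gives $\det M=\det(I_n)\cdot\bigl(1+y^{\textsf T}x\bigr)=1+y^{\textsf T}x$, whereas taking the Schur complement of the $(2,2)$ block gives $\det M=1\cdot\det(I_n+xy^{\textsf T})$. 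Equating the two yields $\det(I_n+xy^{\textsf T})=1+y^{\textsf T}x$, and hence $\det(A+uv^{\textsf T})=(1+v^{\textsf T}A^{-1}u)\det(A)$. In particular, since $\det A\neq 0$, the matrix $A+uv^{\textsf T}$ is invertible if and only if $1+v^{\textsf T}A^{-1}u\neq 0$.

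Assuming now that $1+v^{\textsf T}A^{-1}u\neq 0$, to prove the inverse formula I would multiply $A+uv^{\textsf T}$ by the claimed right-hand side and simplify, using throughout that $v^{\textsf T}A^{-1}u$ is a scalar and therefore commutes past the rank-one factors:
\begin{align*}
(A+uv^{\textsf T})\Bigl(A^{-1}-\frac{A^{-1}uv^{\textsf T}A^{-1}}{1+v^{\textsf T}A^{-1}u}\Bigr)
&= I + uv^{\textsf T}A^{-1} - \frac{u\bigl(1+v^{\textsf T}A^{-1}u\bigr)v^{\textsf T}A^{-1}}{1+v^{\textsf T}A^{-1}u}\\
&= I ,
\end{align*}
and the symmetric computation on the other side shows that this is a two-sided inverse. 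There is no real obstacle in this argument: the only ingredient beyond routine bookkeeping is the matrix determinant lemma, itself a one-line Schur-complement identity, and the only point demanding any care is the logical ordering together with the repeated use of the fact that $v^{\textsf T}A^{-1}u$ is scalar.
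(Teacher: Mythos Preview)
Your proof is correct and complete. The paper does not actually prove this lemma at all: it simply records the statement and refers the reader to \cite{Hager1989}. By contrast, you give a self-contained argument, first reducing the determinant identity to the matrix determinant lemma $\det(I+xy^{\textsf T})=1+y^{\textsf T}x$ via the factorisation $A+uv^{\textsf T}=A(I+A^{-1}u\,v^{\textsf T})$ and then establishing that lemma by computing the determinant of the auxiliary block matrix $\begin{pmatrix} I_n & x\\ -y^{\textsf T} & 1\end{pmatrix}$ in two ways via Schur complements; the inverse formula is then checked by direct multiplication. This is the standard textbook route and is entirely sound.
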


See, e.g. \cite{Hager1989}.

The two next lemmas were mentioned by Stanislav  Volkov (private communication). 


\begin{lemma}\label{lem.sum} Assume that $\int_c^{\infty}\frac{\rmd u}{w(u)}<\infty$ and $w_*:=\inf_{u\ge c}w(u)>0$ with some deterministic constant $c>0$. Let $\lambda>0$ be a deterministic constant, and let $\xi_j,\eta_j$ with $j\in \N$ be i.i.d. exponential random variables with parameter 1. Then
$$S_n:= \sum_{j=1}^n \frac{\eta_j}{w\left(c+\lambda^{-1}\sum_{k=1}^j\xi_i \right)}$$
converges a.s. to a finite random variable $S_{\infty}$ with $$\E[e^{y S_{\infty}}]=\exp\left( \lambda y \int_c^{\infty} \frac{\rmd x}{w(x)-y} \right) \quad\text{for $y\in (-\infty,w_*)$}\quad  \text{and}\quad \E[S_{\infty}]=\lambda\int_c^{\infty} \frac{\rmd x}{w(x)}.$$
\end{lemma}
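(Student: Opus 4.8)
The plan is to condition on the randomness of the $\xi$'s, observe that conditionally $S_\infty$ is a convergent series of independent weighted exponentials, and then average using the elementary Laplace-functional identities for a homogeneous Poisson process. Write $\mathcal H:=\sigma(\xi_k:k\in\N)$ and set $\zeta_j:=\lambda^{-1}\sum_{k=1}^{j}\xi_k$, so that $(\zeta_j)_{j\ge1}$ are the successive points of a rate-$\lambda$ homogeneous Poisson process on $(0,\infty)$, and $S_n=\sum_{j=1}^{n}\eta_j/w(c+\zeta_j)$ where, conditionally on $\mathcal H$, the $\eta_j$ remain i.i.d.\ $\mathrm{Exp}(1)$ (this is immediate from the joint independence of all the $\xi$'s and $\eta$'s). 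Since every term is non-negative, $(S_n)_n$ is non-decreasing, hence converges almost surely to $S_\infty:=\sum_{j\ge1}\eta_j/w(c+\zeta_j)\in[0,\infty]$; it remains to show this limit is a.s.\ finite and to compute its law.

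Put $a_j:=1/w(c+\zeta_j)$, which is $\mathcal H$-measurable. Campbell's theorem for the Poisson process $(\zeta_j)_j$ gives $\E\big[\sum_{j}a_j\big]=\lambda\int_0^\infty \rmd t/w(c+t)=\lambda\int_c^\infty \rmd x/w(x)<\infty$ by hypothesis, so $\sum_j a_j<\infty$ a.s.; conditionally on $\mathcal H$, $S_\infty$ is then an a.s.\ finite sum of independent exponentials with rates $a_j^{-1}$, whence
$$
\E\big[S_\infty\mid\mathcal H\big]=\sum_{j\ge1}a_j,\qquad
\E\big[e^{-yS_\infty}\mid\mathcal H\big]=\prod_{j\ge1}\frac{1}{1+ya_j}
=\exp\Big(-\sum_{j\ge1}\log\big(1+\tfrac{y}{w(c+\zeta_j)}\big)\Big).
$$
Taking expectations — by monotone convergence for the mean, and by dominated convergence for the Laplace transform, the partial products lying in $(0,1]$ — and applying the Laplace-functional identity $\E\big[\exp(-\sum_j g(\zeta_j))\big]=\exp\big(-\lambda\int_0^\infty(1-e^{-g(t)})\,\rmd t\big)$ with $g(t)=\log\!\big(1+y/w(c+t)\big)$, for which $1-e^{-g(t)}=y/\big(w(c+t)+y\big)$, yields after the substitution $x=c+t$
$$
\E[S_\infty]=\lambda\int_c^\infty\frac{\rmd x}{w(x)}<\infty,\qquad
\E[e^{-yS_\infty}]=\exp\Big(-\lambda y\int_c^\infty\frac{\rmd x}{w(x)+y}\Big),
$$
both integrals being finite since $0<1/(w(x)+y)\le 1/w(x)$ and $\int_c^\infty\rmd u/w(u)<\infty$. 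In particular $\E[S_\infty]<\infty$ forces $S_\infty<\infty$ almost surely, which completes the proof.

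The argument is essentially bookkeeping once one recognises $(\zeta_j)_j$ as a Poisson process — equivalently, $\{(c+\zeta_j,\eta_j)\}_j$ as a Poisson random measure on $(c,\infty)\times(0,\infty)$ with intensity $\lambda\,\rmd x\,e^{-s}\,\rmd s$, from which both formulas drop out of Campbell's theorem in one stroke — so I do not anticipate a real obstacle. The only points needing a word of care are the conditional independence structure after conditioning on $\mathcal H$ and the two elementary interchanges of expectation with an infinite sum and an infinite product, both routine.
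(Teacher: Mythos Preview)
Your proof is correct and takes a genuinely different route from the paper's. The paper fixes $n$, uses the uniform order-statistics representation of Poisson arrivals---writing $\nu_n=\xi_1+\dots+\xi_{n+1}$ and $U_j=\nu_{j-1}/\nu_n$ so that (conditionally on $\nu_n$) the $U_j$ behave like i.i.d.\ uniforms on $[0,1]$---computes the characteristic function of a single term $\kappa_j=\eta_j/w(c+\nu_n U_j/\lambda)$ by an explicit double integral, raises it to the $n$-th power, and then passes to the limit $n\to\infty$ using a Cram\'er--Chernoff bound to force $\nu_n/n\to 1$. Your argument bypasses this limiting procedure entirely: you condition on $\mathcal H=\sigma(\xi_k)$, identify $(\zeta_j)_j$ as a rate-$\lambda$ Poisson process, apply Campbell's theorem for the mean and the Laplace-functional identity $\E[\exp(-\sum_j g(\zeta_j))]=\exp(-\lambda\int(1-e^{-g}))$ for the transform. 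This is cleaner and more conceptual---no approximation, no concentration estimate, and both formulas drop out at once---while the paper's approach is more elementary in the sense that it does not invoke Campbell's theorem by name but computes everything from first principles. Both exploit the same underlying Poisson structure; yours simply recognises it earlier.
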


\begin{proof}
Fix some $n\in \N$. Set $\nu_n=\xi_1+\dots+\xi_{n+1}$ and $U^{(j)}={\nu_{j-1}}/{\nu_n}$ for each $1\le j\le n$. By the properties of Poisson process, $U^{(1)}\le U^{(2)}\le \cdots\le U^{(n)}$ are the order statistics of $n$ independent uniform random variables $(U_j)_{1\le j\le n}$ on $[0,1]$, which are independent of $\nu_n$ and $(\eta_j)_{j\ge 1}$. 
Therefore, $$S_n=\sum_{j=1}^n\frac{\eta_j}{w(c+ \lambda^{-1}\nu_nU^{(j)})}\stackrel{d}{=}\sum_{j=1}^n\kappa_j \quad\text{with} \quad
\kappa_j:=\frac{\eta_j}{w(c+ \lambda^{-1}\nu_nU_j)}.
$$
Recall that $w_*=\inf_{u\in [c,\infty)} w(u)>0$. For $y\in (-\infty, w_*)$, we have
\begin{align*}
\E[ e^{y\kappa_j}|\nu_n=\nu]&=\int_0^1 \int_0^\infty 
e^{\frac{yu}{w(c+\nu x/\lambda)}} e^{-u} \rmd u \rmd x  
=\int_0^1  \frac{  \rmd x  }{1-\frac{y}{w\left( c+\nu x/\lambda \right)} }\\
& =1+\int_0^1  \frac{ y \rmd x  }{w\left( c+\nu x/\lambda \right)- y}
=1+\frac {\lambda y}{\nu}\int_0^{\frac {\nu}{\lambda}} 
\frac{  \rmd u  }{w\left( c+u \right)-y} \\ &
=1+\frac {\lambda y}{\nu}\left[\int_c^{\infty} 
\frac{  \rmd u  }{w\left( u \right)-y}
+o(1)\right] \quad \text{as $\nu\to\infty$}.
\end{align*}
Using a Cramer-Chernoff bound, there are positive constants $K_1$ and $K_2$ such that for all $n\in\N$,
\begin{align*}
\P(|\nu_n/n-1|>n^{-1/3})&=\P(|\nu_n-\E\nu_n|>n^{2/3})\le  e^{- K_1 n^{K_2} }.
\end{align*}
Hence, by Borel-Cantelli lemma, there is a (random) positive integer $n_0$ such that $$n-n^{2/3}\le \nu_n\le n + n^{2/3}\quad \text{for all $n\ge n_0$}.$$ 
Therefore, by monotone convergence theorem,
$$
\E e^{yS_{\infty}}=\lim_{n\to\infty} \E\left[\left(\E[e^{y \kappa_1} | \nu_n]\right)^n\right]= \E\left[\lim_{n\to\infty}\left(\E[e^{y \kappa_1} | \nu_n]\right)^n\right]
=\exp\left(\lambda y\int_c^{\infty}\frac{  \rmd x}{w(x)-y}\right).
$$
\end{proof}

\begin{lemma}\label{gamma.seq}
Let $\alpha>1, \nu>0, p>1$ and $q>0$  be fixed real numbers and let $(a_k)_{k\ge 0}$ be a positive sequence such that 
\begin{itemize}
    \item $0<a_0<q^{-1/(\alpha-1)}$ and
    \item for all $k\ge 0$,
    \begin{align}\label{def.a}
    a_{k+1}\le a_k\left(1-\frac{p}{\nu+k}\left( 1-q a_k^{\alpha-1}\right)+r_k\right),
\end{align}
where $(r_k)_{k\ge0}$ is some sequence such that $\sum_{k=0}^{\infty}r_k<\infty$ and $r_k\le \frac{p}{\nu+k}\left( 1-q a_0^{\alpha-1}\right)$ for all $k\ge 0$.
\end{itemize}
Then $\limsup_{k\to\infty} (\nu+k)^p a_k<\infty.$ 
\end{lemma}

\begin{proof}
By the assumptions of the lemma, the sequence $(a_n)_{n\ge 0}$ is positive and non-increasing. Hence
\begin{align}\label{a.seq}
    0<a_k\le a_0<q^{-1/(\alpha-1)}\quad \text{for all $k\ge 0$}.\end{align}
Iterating \eqref{def.a}, we obtain
 \begin{align*}
     a_k\le a_0 \prod_{j=0}^{k-1}\left(1-\frac{p}{\nu+j}\left( 1-q a_j^{\alpha-1}\right)+r_j\right).
 \end{align*}
Using \eqref{a.seq} and the fact that $1+x\le e^{x}$ for all $x\in \R$, we thus have
\begin{align*}
    a_k& \le a_0 \exp\left(-p\sum_{j=0}^{k-1}\Big(\frac{1}{\nu+j}\big( 1-qa_0^{\alpha-1}\big)+r_j\Big) \right)\\
    &\le K_1 \exp\left(-\eps \log(\nu+k) \right)=K_1 (\nu+k)^{-\epsilon},
\end{align*}
where we set $\eps:=p(1-qa_0^{\alpha-1})>0$ and $K_1$ is some positive constant.
Hence
$$a_k \le a_0 \exp\left(-\sum_{j=0}^{k-1}\Big(\frac{p}{\nu+j}-pq K_1(\nu+j)^{-1-\eps(\alpha-1)}+r_j\Big)  \right)\le K_2 (\nu+k)^{-p},$$
where $K_2$ is some positive constant.
This ends the proof of the lemma.
\end{proof}

{\bf Acknowledgements.}  This  work was partially supported by Australian Research Council grant  ARC DP230102209. We thank anonymous referees for very helpful comments and spotting gaps in the previous version.

\bibliographystyle{amsplain}
\bibliography{mybib}

\providecommand{\bysame}{\leavevmode\hbox to3em{\hrulefill}\thinspace}
\providecommand{\MR}{\relax\ifhmode\unskip\space\fi MR }
\providecommand{\MRhref}[2]{%
  \href{http://www.ams.org/mathscinet-getitem?mr=#1}{#2}
}
\providecommand{\href}[2]{#2}
\begin{thebibliography}{10}

\bibitem{ACK2014}
Omer Angel, Nicholas Crawford, and Gady Kozma, \emph{Localization for linearly
  edge reinforced random walks}, Duke Math. J. \textbf{163} (2014), no.~5,
  889--921. \MR{3189433}

\bibitem{BGL2014}
Dominique Bakry, Ivan Gentil, and Michel Ledoux, \emph{Analysis and geometry of
  {M}arkov diffusion operators}, Grundlehren der mathematischen Wissenschaften
  [Fundamental Principles of Mathematical Sciences], vol. 348, Springer, Cham,
  2014. \MR{3155209}

\bibitem{BSS2014}
Anne-Laure Basdevant, Bruno Schapira, and Arvind Singh, \emph{Localization on 4
  sites for vertex-reinforced random walks on {$\mathbb{Z}$}}, Ann. Probab.
  \textbf{42} (2014), no.~2, 527--558. \MR{3178466}

\bibitem{BRS2013}
Michel Benaim, Olivier Raimond, and Bruno Schapira, \emph{Strongly
  vertex-reinforced-random-walk on a complete graph}, ALEA Lat. Am. J. Probab.
  Math. Stat. \textbf{10} (2013), no.~2, 767--782. \MR{3125746}

\bibitem{CNV2022}
Andrea Collevecchio, Tuan-Minh Nguyen, and Stanislav Volkov,
  \emph{Vertex-reinforced jump process on the integers with nonlinear
  reinforcement}, Ann. Appl. Probab. \textbf{32} (2022), no.~4, 2671--2705.
  \MR{4474517}

\bibitem{1}
D.~Coppersmith and P.~Diaconis, \emph{Random walk with reinforcement}, 1986,
  Unpublished manuscript.

\bibitem{CT2017}
Codina Cotar and Debleena Thacker, \emph{Edge- and vertex-reinforced random
  walks with super-linear reinforcement on infinite graphs}, Ann. Probab.
  \textbf{45} (2017), no.~4, 2655--2706. \MR{3693972}

\bibitem{Davis1990}
Burgess Davis, \emph{Reinforced random walk}, Probab. Theory Related Fields
  \textbf{84} (1990), no.~2, 203--229. \MR{1030727}

\bibitem{DV2002}
Burgess Davis and Stanislav Volkov, \emph{Continuous time vertex-reinforced
  jump processes}, Probab. Theory Related Fields \textbf{123} (2002), no.~2,
  281--300. \MR{1900324}

\bibitem{DV2004}
\bysame, \emph{Vertex-reinforced jump processes on trees and finite graphs},
  Probab. Theory Related Fields \textbf{128} (2004), no.~1, 42--62.
  \MR{2027294}

\bibitem{DST2015}
Margherita Disertori, Christophe Sabot, and Pierre Tarr\`es, \emph{Transience
  of edge-reinforced random walk}, Comm. Math. Phys. \textbf{339} (2015),
  no.~1, 121--148. \MR{3366053}

\bibitem{Hager1989}
William~W. Hager, \emph{Updating the inverse of a matrix}, SIAM Rev.
  \textbf{31} (1989), no.~2, 221--239. \MR{997457}

\bibitem{LT2007}
Vlada Limic and Pierre Tarr\`es, \emph{Attracting edge and strongly edge
  reinforced walks}, Ann. Probab. \textbf{35} (2007), no.~5, 1783--1806.
  \MR{2349575}

\bibitem{LT2008}
\bysame, \emph{What is the difference between a square and a triangle?}, In and
  out of equilibrium. 2, Progr. Probab., vol.~60, Birkh\"{a}user, Basel, 2008,
  pp.~481--495. \MR{2477395}

\bibitem{P1992}
Robin Pemantle, \emph{Vertex-reinforced random walk}, Probab. Theory Related
  Fields \textbf{92} (1992), no.~1, 117--136. \MR{1156453}

\bibitem{P2022}
R\'emy Poudevigne-Auboiron, \emph{Monotonicity and phase transition for the
  {VRJP} and the {ERRW}}, J. Eur. Math. Soc. (JEMS) \textbf{26} (2024), no.~3,
  789--816. \MR{4721024}

\bibitem{NR2021}
Olivier Raimond and Tuan-Minh Nguyen, \emph{Strongly vertex-reinforced jump
  process on a complete graph}, Ann. Inst. Henri Poincar\'{e} Probab. Stat.
  \textbf{57} (2021), no.~3, 1549--1568. \MR{4291445}

\bibitem{RT2023}
Olivier {Raimond} and Pierre {Tarres}, \emph{{Non-convergence to unstable
  equilibriums for continuous-time and discrete-time stochastic processes}},
  arXiv e-prints (2023), arXiv:2311.02978.

\bibitem{ST2015}
Christophe Sabot and Pierre Tarr\`es, \emph{Edge-reinforced random walk,
  vertex-reinforced jump process and the supersymmetric hyperbolic sigma
  model}, J. Eur. Math. Soc. (JEMS) \textbf{17} (2015), no.~9, 2353--2378.
  \MR{3420510}

\bibitem{SZ2019}
Christophe Sabot and Xiaolin Zeng, \emph{A random {S}chr\"{o}dinger operator
  associated with the vertex reinforced jump process on infinite graphs}, J.
  Amer. Math. Soc. \textbf{32} (2019), no.~2, 311--349. \MR{3904155}

\bibitem{Schapira2021}
Bruno Schapira, \emph{Localization on 5 sites for vertex reinforced random
  walks: towards a characterization}, Ann. Appl. Probab. \textbf{31} (2021),
  no.~4, 1774--1786. \MR{4312846}

\bibitem{Sellke2008}
T.~Sellke, \emph{Reinforced random walk on the {$d$}-dimensional integer
  lattice}, Markov Process. Related Fields \textbf{14} (2008), no.~2, 291--308.
  \MR{2437533}

\bibitem{Tarres2004}
Pierre Tarr\`es, \emph{Vertex-reinforced random walk on {$\mathbb{Z}$}
  eventually gets stuck on five points}, Ann. Probab. \textbf{32} (2004),
  no.~3B, 2650--2701. \MR{2078554}

\bibitem{Tutte1984}
W.~T. Tutte, \emph{Graph theory}, Encyclopedia of Mathematics and its
  Applications, vol.~21, Addison-Wesley Publishing Company, Advanced Book
  Program, Reading, MA, 1984. \MR{746795}

\bibitem{Volkov2001}
Stanislav Volkov, \emph{Vertex-reinforced random walk on arbitrary graphs},
  Ann. Probab. \textbf{29} (2001), no.~1, 66--91. \MR{1825142}

\end{thebibliography}
\end{document}